\newcommand{\T}{{\mathcal T}}
\newcommand{\cC}{{\mathcal C}}
\newcommand{\cR}{{\mathcal R}}
\newcommand{\cL}{{\mathcal L}}
\newcommand{\G}{{\mathcal G}}
\newcommand{\Z}{{\mathbb Z}}
\newcommand{\cH}{\mathcal{H}}
\newcommand{\cP}{\mathcal{P}}
\newcommand{\Id}{\operatorname{Id}}
\theoremstyle{plain}
\numberwithin{equation}{section}
\newtheorem{theorem}{Theorem}[section]
\newtheorem{lemma}[theorem]{Lemma}
\newtheorem{proposition}[theorem]{Proposition}
\newtheorem{algoritmo}[theorem]{}
\theoremstyle{definition}
\newtheorem{definition}[theorem]{Definition}
\newtheorem{example}[theorem]{Example}
\theoremstyle{remark}
\newtheorem{remark}[theorem]{Remark}
\newtheorem{notacion}[theorem]{Notation}
\numberwithin{figure}{section}
\definecolor{mycommentcolor}{RGB}{255,0,0} 
\theoremstyle{remark}
\newcounter{commentcounter}
\newcounter{todocounter}
\author[S. Cui]{Shawn X. Cui}
\address{ Department of Mathematics, Department of Physics and Astronomy, Purdue University, West Lafayette, IN, United States}
\email{cui177@purdue.edu}
\author[C. Galindo]{C\'esar Galindo}
\address{ Departamento de Matem\'aticas, Universidad de los Andes, Bogot\'a, Colombia}
\email{cn.galindo1116@uniandes.edu.co}
\author[D. Romero]{Diego Romero}
\address{ Departamento de Matem\'aticas, Universidad de los Andes, Bogot\'a, Colombia}
\email{da.romero12@uniandes.edu.co}
\begin{document}

\title[Abelian Group Quantum Error Correction in Kitaev's Model]{Quantum error correction in Kitaev's quantum double model for Abelian groups}

\thanks{ C.G. was partially supported by Grant INV-2023-162-2830 from the School of Science of Universidad de los Andes. D.R. was partially supported by the Grants INV-2023-164-2722 and INV-2022-150-2587 from the School of Science of Universidad de los Andes.  D.R. was partially supported by the startup grant of S.C. at Purdue. S.C. is partly supported by NSF grant CCF-2006667, ORNL-led Quantum Science Center, and ARO MURI. 
\newline
D.R. would like to thank the hospitality and excellent working conditions of the Department of Mathematics at Purdue University, where he carried out this research as a Visiting Scholar.  C.G. and D.R. thank Tristram Bogart for useful discussions.}

\begin{abstract}
In this paper, we present a detailed mathematical description of the error correction process for Kitaev's model for finite Abelian groups. The number of errors Kitaev's model can correct depends on the lattice and its topology. Although there is a theoretical maximum number of errors that can be corrected, we prove that correcting this number of errors, in general, is an NP-complete problem. Consequently, we introduce a polynomial-time correction algorithm that corrects a number of errors below the theoretical maximum.

\end{abstract}

\date{\today}

\maketitle
\section{Introduction}\label{intro}

Quantum computing has been an intense area of study for the past decades, with a key challenge being the manipulation of information while minimizing decoherence. This challenge fostered the development of quantum error correcting codes (QECC), which have evolved in significance and variety since their inception \cite{shor1995scheme}. Depending on the desired parameters such as encoding rate, distance, and threshold, different QECCs have been developed with the majority of them described by the stabilizer formalism \cite{Gottesman}. An exhaustive list of such codes is beyond the scope of the paper, but among some notable ones are, for example, CSS codes \cite{calderbank1996good, steane1996simple}, triorthogonal codes \cite{bravyi2012magic}, fiber bundle codes \cite{hastings2021fiber}, and lifted product codes \cite{panteleev2021quantum}.  These QECCs   involve the encoding of quantum states into qubits (i.e., two-state quantum systems), but similar concepts can also be defined on \emph{qudits}, higher-dimensional quantum systems, leading to a variety of coding structures \cite{gottesman1998fault}.

A large class of QECCs arise from exactly solvable lattice models which has topological quantum order. Prominent examples include the the Levin-Wen  path-net model \cite{Levin}, Kitaev's quantum double model \cite{anyons}, Haah code \cite{haah2011local}, and X-cube model \cite{vijay2015new}. In such models, the Hamiltonian is typically written as a sum of local commuting projectors, and the code subspace is the common eigenspace of the projectors. This generalizes the (Pauli) stabilizer formalism. Among the most well-known topological order is the toric/surface code, which involves arranging a 2D array of qubits on a toric surface and  has undergone extensive study \cite{lattice, calfault2}. The error correction process in toric code is well-established (e.g. \cite{calfault1}), with a threshold as high as $1\%$ in certain error models.


A natural question is how the QECC obtained from other topological order is compared to toric code. The latter is a special example of the Kitaev's quantum double model based on a finite group. When the group is chosen to be $\mathbb{Z}_2$, the model recovers the toric code. More generally, Abelian (resp. non-Abelian) groups correspond to Abelian (resp. non-Abelian) topological order. For the purpose of universal topological quantum computing, the use of non-Abelian topological order is necessary. Hence it is especially interesting to study the error correction in non-Abelian models. However, this seems challenging due to the complexity of the Hamiltonian and the quasi-particle creation/annihilation operators. Some attempts in this direction include error correction in the Fibonacci  path-net model \cite{schotte2022quantum, schotte2022fault} and the quantum double model based on the group $S_3$ \cite{wootton2014error}, though in the latter reference only error corresponding to charge exicitations were considered.

As a first step, in this paper we consider quantum error correction in Kitaev's model based on any Abelian group $G$. This model features an oriented lattice on a compact oriented surface $\Sigma$, with each edge associated with the space $\mathbb{C}[G]$, the group algebra of $G$. The Hamiltonian is of the form
\begin{align*}
    H = -\sum_{v} A(v) - \sum_{p} B(p),
\end{align*}
where $A(v)$ is a project associated with each vertex $v$ and $B(p)$ is a projector associated with each face $p$ (see Section \ref{kitae}). The code subspace is the common $+1$-eigenspace of the $A(v)'$s and $B(p)'$s.  Unlike the toric code, the description for error detection and correction in Abelian Kitaev model is not thoroughly understood.  While error correction for cyclic groups has been previously discussed \cite{clu3,clu4}, a similar study for general Abelian groups is still lacking. 
This article aims to address this gap by studying error correction in the case of general Abelian groups, improving the results in \cite{clu3,clu4}.


In Theorem \ref{teoricamente}, we establish that to correct errors affecting $n$ edges in Kitaev's Abelian model, each non-contractible cycle in $\Sigma$ must contain more than $2n$ edges. This is extends a well-known result for the toric code. However, Theorem \ref{propNP} reveals that for non-elementary abelian 2-groups, constructing correction operators as described in Theorem \ref{teoricamente} is an NP-complete problem. This complexity stems from the fact that the problem is reducible to the Minimum Tree Partition Problem (MTP), which is NP-complete. In contrast, the toric code has an efficient polynomial-time decoding algorithm.

A main goal of this article is to present Algorithm \ref{algoritm}, which provides a polynomial-time error correction method for Kitaev's model, albeit with a compromise on the number of correctable errors. For effective error correction, Algorithm \ref{algoritm} asks that every non-contractible cycle contains more than $f(n)=\left\lfloor n\left(\frac{2+ \log_2(n)}{2}\right) +1\right\rfloor$ edges. Notice that  $2n < f(n)$ and the function $f(n)$ is $ O(n\log_2(n))$, which is significant given that the order of growth for the optimal solution is $O(n)$ (see Section \ref{seccioncorrecion}).


The paper is organized as follows. Section \ref{kitae} provides a brief review of Kitaev's model for arbitrary groups. Section \ref{acase} focuses on a specific case of Kitaev's model, considering only Abelian groups. Section \ref{secciondeteccion} discuss how to perform error detection and present the process of syndrome extraction. Section \ref{syndrome} examines the information that can be derived about an error from its corresponding syndrome.
Section \ref{numeroteorico} identifies a specific set of correctable errors for Kitaev's model, establishing a theoretical number of errors that can be corrected. Section \ref{seccioncorrecion} develops Algorithm \ref{algoritm}, an error correction algorithm, which, to correct $n$ errors, must ensure that a non-contractible cycle cannot be formed in $\Sigma$ with $f(n)$ edges. Finally, Section \ref{apendice} shows that correcting the theoretical number of errors proposed in Section \ref{numeroteorico} is, in practice, an NP-complete problem.
\section{Kitaev's quantum double model}\label{kitae}

Consider a finite group $G$. The Kitaev's quantum double model based on $G$ is defined on a closed oriented surface $\Sigma$ together with a lattice $\mathcal{L} $ on it. The lattice $\mathcal{L}$ is constructed from vertices, edges, and faces (also called  plaquettes) where each edge is endowed with an arbitrary orientation. Each edge \( e \) is bounded by exactly two vertices \( v_1 \) and \( v_2 \); we say that \( v_1 \) and $v_2$ are incident to $e$, and that \( v_1 \) and \( v_2 \) are adjacent. Every edge belongs to the boundary of  exactly two faces. An  example of a generic lattice can see in Figure \ref{lattice}.

\begin{figure}[h!]
\centering
\tikzset{every picture/.style={line width=0.75pt}} 
\begin{tikzpicture}[x=0.65pt,y=0.65pt,yscale=-1,xscale=1]
\draw   (26,202) -- (42.5,147) -- (93.5,147) -- (110,202) -- cycle ;
\draw    (110,202) -- (236,202) ;
\draw    (93.5,147) -- (166,202) ;
\draw    (93.5,147) -- (177,147) ;
\draw    (177,147) -- (236,202) ;
\draw    (247,129) -- (236,202) ;
\draw    (176,98) -- (177,147) ;
\draw    (231,84) -- (176,98) ;
\draw    (247,129) -- (231,84) ;
\draw    (93.5,147) -- (121,98) ;
\draw    (121,98) -- (176,98) ;
\draw    (42.5,147) -- (30,103) ;
\draw    (30,103) -- (76,78) ;
\draw    (76,78) -- (121,98) ;
\draw    (76,78) -- (125,60) ;
\draw    (125,60) -- (176,98) ;
\draw    (125,60) -- (212,57) ;
\draw    (231,84) -- (212,57) ;
\draw    (76,78) -- (69,23) ;
\draw    (69,23) -- (133,23) ;
\draw    (133,23) -- (197,24) ;
\draw    (133,23) -- (125,60) ;
\draw    (197,24) -- (212,57) ;
\draw    (69,23) -- (30,103) ;
\draw  [fill={rgb, 255:red, 0; green, 0; blue, 0 }  ,fill opacity=1 ] (37.53,128.79) -- (30.71,122.65) -- (39.23,119.77) -- cycle ;
\draw  [fill={rgb, 255:red, 0; green, 0; blue, 0 }  ,fill opacity=1 ] (207.3,89.76) -- (201.09,96.52) -- (198.3,87.96) -- cycle ;
\draw  [fill={rgb, 255:red, 0; green, 0; blue, 0 }  ,fill opacity=1 ] (192.25,201.06) -- (200.18,196.44) -- (200.32,205.44) -- cycle ;
\draw  [fill={rgb, 255:red, 0; green, 0; blue, 0 }  ,fill opacity=1 ] (138.25,201.95) -- (130.31,206.55) -- (130.19,197.55) -- cycle ;
\draw  [fill={rgb, 255:red, 0; green, 0; blue, 0 }  ,fill opacity=1 ] (69.25,202.02) -- (61.23,206.48) -- (61.27,197.48) -- cycle ;
\draw  [fill={rgb, 255:red, 0; green, 0; blue, 0 }  ,fill opacity=1 ] (51.29,59.42) -- (51.73,68.59) -- (43.68,64.56) -- cycle ;
\draw  [fill={rgb, 255:red, 0; green, 0; blue, 0 }  ,fill opacity=1 ] (128.16,45.41) -- (125.43,36.65) -- (134.24,38.53) -- cycle ;
\draw  [fill={rgb, 255:red, 0; green, 0; blue, 0 }  ,fill opacity=1 ] (33.16,177.85) -- (31.01,168.92) -- (39.67,171.38) -- cycle ;
\draw  [fill={rgb, 255:red, 0; green, 0; blue, 0 }  ,fill opacity=1 ] (241.71,165.68) -- (238.26,157.18) -- (247.19,158.31) -- cycle ;
\draw  [fill={rgb, 255:red, 0; green, 0; blue, 0 }  ,fill opacity=1 ] (203.75,171.6) -- (212.52,174.3) -- (205.99,180.5) -- cycle ;
\draw  [fill={rgb, 255:red, 0; green, 0; blue, 0 }  ,fill opacity=1 ] (100.5,69) -- (105.8,61.5) -- (109.66,69.63) -- cycle ;
\draw  [fill={rgb, 255:red, 0; green, 0; blue, 0 }  ,fill opacity=1 ] (144.79,147.27) -- (136.88,151.92) -- (136.71,142.93) -- cycle ;
\draw  [fill={rgb, 255:red, 0; green, 0; blue, 0 }  ,fill opacity=1 ] (152.5,98.02) -- (144.48,102.48) -- (144.52,93.48) -- cycle ;
\draw  [fill={rgb, 255:red, 0; green, 0; blue, 0 }  ,fill opacity=1 ] (56.53,88.63) -- (51.57,96.35) -- (47.36,88.4) -- cycle ;
\draw  [fill={rgb, 255:red, 0; green, 0; blue, 0 }  ,fill opacity=1 ] (105.44,126.07) -- (105.04,116.9) -- (113.07,120.97) -- cycle ;
\draw  [fill={rgb, 255:red, 0; green, 0; blue, 0 }  ,fill opacity=1 ] (94.89,86.28) -- (104.05,85.66) -- (100.18,93.78) -- cycle ;
\draw  [fill={rgb, 255:red, 0; green, 0; blue, 0 }  ,fill opacity=1 ] (104.53,183.79) -- (97.71,177.65) -- (106.23,174.77) -- cycle ;
\draw  [fill={rgb, 255:red, 0; green, 0; blue, 0 }  ,fill opacity=1 ] (153.65,80.47) -- (144.58,79.08) -- (150.13,71.99) -- cycle ;
\draw  [fill={rgb, 255:red, 0; green, 0; blue, 0 }  ,fill opacity=1 ] (73.25,147) -- (65.24,151.5) -- (65.26,142.5) -- cycle ;
\draw  [fill={rgb, 255:red, 0; green, 0; blue, 0 }  ,fill opacity=1 ] (130.21,174.3) -- (139.24,175.91) -- (133.52,182.86) -- cycle ;
\draw  [fill={rgb, 255:red, 0; green, 0; blue, 0 }  ,fill opacity=1 ] (223.76,73.8) -- (215.53,69.74) -- (222.95,64.66) -- cycle ;
\draw  [fill={rgb, 255:red, 0; green, 0; blue, 0 }  ,fill opacity=1 ] (240.54,110.19) -- (233.31,104.53) -- (241.62,101.08) -- cycle ;
\draw  [fill={rgb, 255:red, 0; green, 0; blue, 0 }  ,fill opacity=1 ] (206.3,44.07) -- (198.68,38.96) -- (206.72,34.9) -- cycle ;
\draw  [fill={rgb, 255:red, 0; green, 0; blue, 0 }  ,fill opacity=1 ] (169,23.67) -- (160.82,27.83) -- (161.19,18.84) -- cycle ;
\draw  [fill={rgb, 255:red, 0; green, 0; blue, 0 }  ,fill opacity=1 ] (106,23.03) -- (97.96,27.47) -- (98.04,18.47) -- cycle ;
\draw  [fill={rgb, 255:red, 0; green, 0; blue, 0 }  ,fill opacity=1 ] (176.55,129.5) -- (171.95,121.55) -- (180.95,121.45) -- cycle ;
\draw  [fill={rgb, 255:red, 0; green, 0; blue, 0 }  ,fill opacity=1 ] (178.04,57.85) -- (170.04,62.34) -- (170.05,53.34) -- cycle ;
\draw  [fill={rgb, 255:red, 0; green, 0; blue, 0 }  ,fill opacity=1 ] (72.58,51.87) -- (77.98,59.3) -- (69.04,60.34) -- cycle ;
\end{tikzpicture}
\caption{Example of a generic lattice $\cL$.}
\label{lattice}
\end{figure}
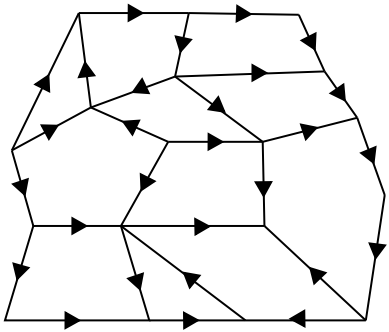

We adopt the following notation:
\begin{align*}
    V= \{\text{vertices of }\cL\} && E= \{\text{edges of }\cL\} && P= \{\text{faces of }\cL\}
\end{align*}
To each edge \(e \in E\), we associate the Hilbert space \(\mathcal{H}_e = \mathbb{C}[G] = \operatorname{span}\{ \ket{g} : g \in G\}\). The \emph{total space} associated with \(\mathcal{L}\) is defined as 
\[
\mathcal{H}_{\text{tot}} = \bigotimes_{e \in E} \mathcal{H}_e.
\]

A \emph{site} $s$ is defined as a pair $(v, p) \in V \times P$ with the condition $v \subset p$. Associated with each \emph{site} $s$ and $g \in G$, we will define two types of local operators, $A_g(s)$ and $B_h(s)$. 

For a site $s = (v, p)$ and an element $g \in G$, the operator $A_g(s)$ acts on edges in $\operatorname{star}(v)$, where $\operatorname{star}(v)$ represents the set of edges to which the vertex $v$ is incident. The action of $A_g(s)$ on an edge in $\operatorname{star}(v)$ depends on the orientation of the edge with respect to $v$: it is the left multiplication by $g$ if the edge is oriented away from $v$, and the right multiplication by $g^{-1}$ if it points towards $v$.

Additionally, for each site $s = (v, p)$ and an element $h \in G$, the operator $B_h(s)$ is defined to act on edges that form the boundary $\partial p$ of the face $p$. The action of $B_h(s)$ is determined by a specific product of group elements. To compute this product, one starts at vertex $v$ and travels around the boundary of $p$ in the counterclockwise direction. The group elements associated with the edges are multiplied in the order they are encountered. However, if the orientation of an edge is opposite to the direction of travel, the inverse of the associated group element on that edge is used in the product. The operator $ B_h(s)$ acts as the identity if $h$ equals this product. If $h$ does not match the product, $ B_h(s)$ acts as \( 0 \). An example of the action of $B_h$ on a site $s=(v,p)$ is shown in Figure \ref{operatorf1}.
\begin{figure}
  \centering
  \includegraphics[scale=1]{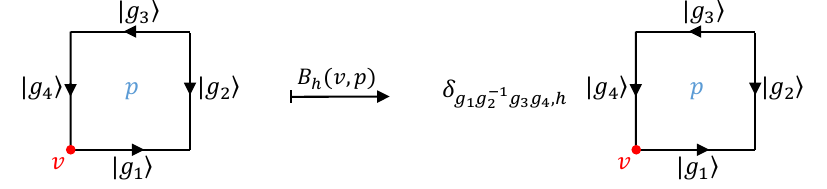}
\caption{Example of the action of operator $B_h$ on the site $s=(v,p)$.}
 \label{operatorf1}
\end{figure}

\begin{remark}
The operators $A_g(s)$ and $B_h(s)$ satisfy the relations:
\begin{multicols}{2} 
\begin{itemize}
    \item[$\imath)$] $A_{g_1}(s)A_{g_2}(s) = A_{g_1g_2}(s)$
    \item[$\imath \imath)$] $B_{h_1}(s)B_{h_2}(s) = \delta_{h_1,h_2}B_{h_1}(s)$
    \item[$\imath \imath \imath)$] $A_{g}(s)B_{h}(s) = B_{ghg^{-1}}A_g(s)$
    \item[$\imath v)$] $A_e(s)= \sum_{h\in G} B_h(s) = \operatorname{Id}$,
\end{itemize}
\end{multicols}for all $g_1,g_2, h_1,h_2\in G$.
\end{remark}

By definition, the action of $A_g(s)$ for a site $s=(v,p)$ does not depend on the face $p$, but only on the vertex $v$, that is $A_g(s)=A_g(v)$. The operators $B_h(s)$ depend on both $p$ and $v$, but that is not the case when $h=e$ because a product of group elements resulting in the identity is a cyclic property, that is, $B_e(s)=B_e(p)$. The vertex operator, denoted by $A(v)$, and the  face operator, denoted by $B(p)$, are defined as follows:
\begin{align*}
    A(v) := \frac{1}{|G|}\sum_{g \in G} A_g(v) && \text{and} && B(p) := B_e(p).
\end{align*}

\begin{remark}
For each $v_1, v_2 \in V$ and $p_1, p_2 \in P$, the operators satisfy:
\begin{itemize}
    \item[$\imath)$] Commutativity: $A(v_1)A(v_2) = A(v_2)A(v_1)$, $B(p_1)B(p_2) = B(p_2)B(p_1)$, and $A(v_1)B(p_1) = B(p_1)A(v_1)$.
    \item[$\imath \imath)$] Idempotence: $A(v_1)^2 = A(v_1)$, $B(p_1)^2 = B(p_1)$.
\end{itemize}    
\end{remark}

The quantum code corresponding to the lattice $\mathcal{L}$ and the group $G$ is defined by the subspace of the \emph{ground states}  $V_{\text{gs}} \subset \mathcal{H}_{\text{tot}}$, given by 
$$
V_{\text{gs}} = \left\{ \ket{\psi} \in \mathcal{H}_{\text{tot}} : A(v) \ket{\psi} = \ket{\psi}, \, B(p) \ket{\psi} = \ket{\psi}, \, \forall v \in V, \, p \in P \right\}.
$$
In \cite{anyons}, it is stated that the space $V_{\text{gs}}$ only depends on the topology of $\Sigma$ and that the dimension of the ground states is equal to the number of orbits in $\operatorname{Hom}(\pi_1(\Sigma),G)$ under
the conjugation $G$-action. Moreover, in \cite{shaw2020}, it is proved mathematically that $$V_{\text{gs}} \subset \mathcal{H}_{\text{tot}}$$ is a quantum error-correcting code for every finite group.

\section{Kitaev's model for abelian groups}\label{acase}
Let $G$ be an abelian group and $\widehat{G}$ the abelian group of linear characters on $G$, that is, $\widehat{G}$ consists of group morphisms from $G$ to $U(1)$. 
Consider the operators on $\mathcal{H}_e = \mathbb{C}[G] = \operatorname{span}\{ \ket{g} : g \in G \}$, defined by $\{X_g : g \in G\}$ where $X_g\ket{h} = \ket{g+h}$, and the set of operators $\{Z_\gamma : \gamma \in \widehat{G} \}$, such that $Z_\gamma\ket{g} = \gamma(g)\ket{g}$. Furthermore, as the definition of the Kitaev's model is dependent on the lattice orientation, we will also consider the following functions.

\begin{definition} \label{signo}
Let $(e,v) \in E \times V $, 
where $e \in star(v)$,  and $(e,p) \in P \times V $ where $e \in \partial p$. We define the sign functions
\begin{align}
\varepsilon(e,v) :=\begin{cases} ~ ~ 1, & \text{if } e \text{ is pointed away from } v,\\-1, & \text{if } e \text{ is pointed to } v. \end{cases}
\end{align}

\begin{align}
    \varepsilon(e,p) := \begin{cases}
        \phantom{-}1, & \text{if } e \text{ is oriented in the counterclockwise direction along } \partial p, \\
        -1, & \text{if } e \text{ is oriented in the clockwise direction along } \partial p.
    \end{cases}
\end{align}

\end{definition}

We can express the operator $A_g$ using the $X_g$ operator and the function $\varepsilon(e,v)$. $A_g(v)$ applies either the operator $X_g$ or the operator $X_{-g}$ to the edges adjacent to vertex $v$, depending on the orientation of the edge. It acts as the identity operator on the other edges of the lattice. Therefore, we have:
\begin{align} \label{vertexoperatorg}
    A_g(v) = \left( \bigotimes_{e \in star(v)}X_g^{\varepsilon(e,v)}\right) \otimes \left( \bigotimes_{e \in E\setminus star(v)}I_e \right)
\end{align}

See Figure \ref{groupv2} for an example of the action of $A_g(v)$.
\begin{figure}[h!]
\centering
	  \centering  \includegraphics[scale=1]{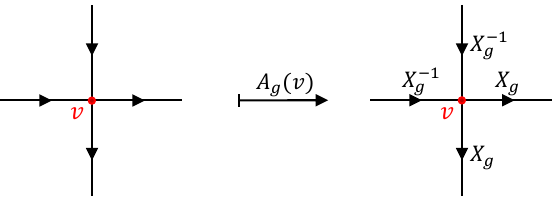}
\caption{The action of operator $A_g(v)$ on $v \in V$.}
\label{groupv2}
\end{figure}

The vertex operator $A(v)$ is given by:
\begin{align*}
A(v)  = \frac{1}{|G|}\sum_{g\in G} A_g(v)\
 = \frac{1}{|G|}\sum_{g\in G} \left( \left( \bigotimes_{e \in \text{star}(v)}X^{\varepsilon(e,v)}_g\right) \otimes \left( \bigotimes_{e \in E \setminus star(v)}I_e \right) \right)
\end{align*}

For every linear character $\gamma \in \widehat{G}$ and face $p$, we define a unitary operator $\widehat{B_\gamma}(p)$, where

\begin{align} \label{faceoperatorgamma}
    \widehat{B}_\gamma(p) := \left( \bigotimes_{e \in \partial p}Z^{\varepsilon(e,p)}_{\gamma}\right) \otimes \left( \bigotimes_{e \in E\setminus \partial p}I_e \right).
\end{align}
An example of the action of $\widehat{B}_\gamma$ on a face $p$ is shown in Figure \ref{operatorf2}.
\begin{figure}
  \centering
  \includegraphics[scale=1]{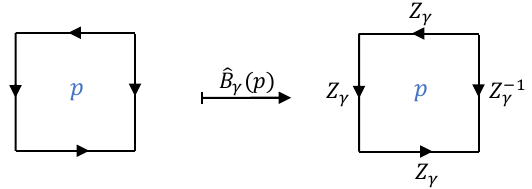}
\caption{The action of operator $\widehat{B}_\gamma$ on the face $p$.}
 \label{operatorf2}
\end{figure}

Moreover, because the group is abelian, the site operators $B_g(s)$  only depend on the face, and they can be expressed using the operators $\{\widehat{B}_\gamma:\gamma\in \widehat{G}\}$,
\begin{align} \label{faceoperatorh}
B_g(p) & =  \frac{1}{|G|}\sum_{\gamma \in \widehat{G}} \overline{\gamma(g)} \widehat{B}_\gamma(p).
\end{align}
The face operator $B(p)$ can be written in terms of the $Z_\gamma$ operator as follows
\begin{align*}
B(p) & = B_e(p)\\
& =   \frac{1}{|G|}\sum_{\gamma \in \widehat{G}} \widehat{B}_\gamma\\
& =  \frac{1}{|G|}\sum_{\gamma \in \widehat{G}} \left( \left( \bigotimes_{e \in \partial p}Z^{\varepsilon(e,p)}_\gamma\right) \otimes \left( \bigotimes_{e \in E \setminus \partial p}I_e \right) \right). 
\end{align*}

The following result allows us to express the ground states only in terms of the operators $A_g$ and $\widehat{B}_\gamma$, which  are expressed in terms of  $X_g$ and $Z_\gamma$ respectively.

\begin{proposition} \label{bprimas}
$\ket{\psi}$ belongs to $V_{gs} $ if and only if for every $g \in G$ and $\gamma \in \widehat{G}$,
every vertex $v \in V$ and face $p \in P$, we have that $A_g(v)\ket{\psi} =\ket{\psi}$ and $\widehat{B}_\gamma(p)\ket{\psi} =\ket{\psi}$. 
\end{proposition}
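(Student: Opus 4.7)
The plan is to reduce this to the standard fact that an average over a unitary group action is the orthogonal projector onto the common fixed subspace of that action. One direction is immediate: if $A_g(v)\ket{\psi}=\ket{\psi}$ for every $g\in G$, then averaging over $g$ yields $A(v)\ket{\psi}=\ket{\psi}$, and similarly averaging $\widehat{B}_\gamma(p)\ket{\psi}=\ket{\psi}$ over $\gamma\in\widehat{G}$ yields $B(p)\ket{\psi}=\ket{\psi}$ by the identity $B(p)=B_e(p)=\tfrac{1}{|G|}\sum_\gamma \widehat{B}_\gamma(p)$ obtained by specializing (\ref{faceoperatorh}) at $g=e$. So only the nontrivial implication requires work.

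For the converse, the first step is to record that $g\mapsto A_g(v)$ is a unitary representation of $G$ on $\mathcal{H}_{\mathrm{tot}}$, and $\gamma\mapsto\widehat{B}_\gamma(p)$ is a unitary representation of $\widehat{G}$. The first is exactly relation (i) of the first remark. For the second, I would give a one-line verification: since $Z_{\gamma_1}Z_{\gamma_2}\ket{g}=\gamma_1(g)\gamma_2(g)\ket{g}=(\gamma_1\gamma_2)(g)\ket{g}$, and the sign $\varepsilon(e,p)$ is the same in each factor, the definition (\ref{faceoperatorgamma}) gives $\widehat{B}_{\gamma_1}(p)\widehat{B}_{\gamma_2}(p)=\widehat{B}_{\gamma_1\gamma_2}(p)$.

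The second step is the key computation. Using the representation property and a reindexing $h\mapsto g^{-1}h$, I obtain
\begin{equation*}
A_g(v)\,A(v)=\frac{1}{|G|}\sum_{h\in G}A_g(v)A_h(v)=\frac{1}{|G|}\sum_{h\in G}A_{gh}(v)=A(v),
\end{equation*}
and analogously $\widehat{B}_\gamma(p)\,B(p)=B(p)$ for every $\gamma\in\widehat{G}$. Hence, if $A(v)\ket{\psi}=\ket{\psi}$ then $A_g(v)\ket{\psi}=A_g(v)A(v)\ket{\psi}=A(v)\ket{\psi}=\ket{\psi}$, and likewise $B(p)\ket{\psi}=\ket{\psi}$ forces $\widehat{B}_\gamma(p)\ket{\psi}=\ket{\psi}$ for every $\gamma$. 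This covers the remaining direction.

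There is essentially no obstacle: the only thing to be careful about is the second-step bookkeeping, namely verifying multiplicativity of the $\widehat{B}_\gamma(p)$'s (which rests on multiplicativity of characters and compatibility of the sign function $\varepsilon(e,p)$ across tensor factors) and correctly invoking the Fourier identity $B_e(p)=\tfrac{1}{|G|}\sum_\gamma\widehat{B}_\gamma(p)$. Both are short and mechanical, so the proof is essentially a clean application of the orthogonal-projection characterization of invariants under an abelian unitary group action.
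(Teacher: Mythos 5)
Your proposal is correct and follows essentially the same route as the paper: one direction by averaging the relations $A_g(v)\ket{\psi}=\ket{\psi}$ and $\widehat{B}_\gamma(p)\ket{\psi}=\ket{\psi}$ over $G$ and $\widehat{G}$, and the other via the absorption identities $A_g(v)A(v)=A(v)$ and $\widehat{B}_\gamma(p)B(p)=B(p)$. The only difference is that you spell out the justification of those identities (the representation property of $g\mapsto A_g(v)$ and $\gamma\mapsto\widehat{B}_\gamma(p)$ plus reindexing), which the paper uses implicitly.
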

\begin{proof}
    Suppose $\ket{\psi} \in V_{gs} $, then for the operators $A_g(v)$ we have
    \begin{align*}
        A_g(v)\ket{\psi} & = A_g(v)A(v)\ket{\psi}  =  A(v)\ket{\psi} = \ket{\psi}
    \end{align*}
   Similarly for the operators $B_\gamma(p)$,
    \begin{align*}
        \widehat{B}_\gamma(p)\ket{\psi} & = \widehat{B}_\gamma (p)B(p)\ket{\psi} =  B(p)\ket{\psi} = \ket{\psi}
    \end{align*}
    For the other direction, we assume that $A_g(v)\ket{\psi} =\ket{\psi}$ and $\widehat{B}_\gamma(p)\ket{\psi} =\ket{\psi}$ for every $g \in G$ and $\gamma \in \widehat{G}$. Because $A(v) = \frac{1}{|G|}\sum_{g \in G} A_g(v)$ then $A(v) \ket{\psi} = \ket{\psi}$ and since $B(p) = \frac{1}{|G|}\sum_{\gamma \in \widehat{G}} \widehat{B}_\gamma(p)$ then $B(p)\ket{\psi} = \ket{\psi}$ and so $\ket{\psi} \in V_{gs} $.
\end{proof}

\section{Error detection and Syndrome extraction} \label{secciondeteccion}

As discussed in \cite[Section 3]{shaw2020}, the space of ground states, $V_{gs}$, is a quantum error-correcting code. To explain how to correct errors, we first need to detect errors and collect certain information, called the syndrome. This section will discuss the detection process and the extraction of the syndrome.

An error on a vector $\ket{\psi} \in V_{gs} \subset \cH_{tot}$ occurs when an operator acts on $\ket{\psi}$ and transforms it. It is known that if we can correct a set of errors then we are able to correct any error arising from a linear combination of these errors. Therefore, to correct errors produced by any operator, it suffices to correct the errors produced by a basis of the space of operators. In the discussions below, we will fix a basis for the space of operators on the total space, and assume without loss of generality that an error is always one of the basis elements.

\begin{proposition}\label{base}
An (orthogonal) basis for the space of operators on the total space $\mathcal{H}_{\text{tot}}$ is given by 
\[
\left\{ \bigotimes_{e \in E} X_{g_e} Z_{\gamma_e} : g_e \in G, \gamma_e \in \widehat{G} \right\},
\]
where $X_{g_e}$ and $Z_{\gamma_e}$ denote the local operators $X_{g}$ and $Z_{\gamma}$, respectively, acting on $\mathcal{H}_e$.
\end{proposition}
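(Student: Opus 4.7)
The statement is essentially a standard fact: the generalized Pauli operators on the qudit $\mathbb{C}[G]$, tensored over the edges, form an orthogonal basis for the operator algebra. My plan is to reduce to the single-edge case and then lift via the tensor product structure of the Hilbert--Schmidt inner product.

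\textbf{Step 1: dimension count.} Since $G$ is a finite abelian group, $|\widehat{G}|=|G|$, so the proposed set has cardinality $(|G|\cdot|\widehat{G}|)^{|E|}=|G|^{2|E|}$, which matches $\dim \operatorname{End}(\mathcal{H}_{\text{tot}})=\dim(\mathcal{H}_{\text{tot}})^2=|G|^{2|E|}$. Hence it is enough to exhibit linear independence, and I will do so by showing orthogonality with respect to the Hilbert--Schmidt inner product $\langle A,B\rangle := \operatorname{tr}(A^{\dagger}B)$.

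\textbf{Step 2: local orthogonality on a single edge.} Working on $\mathcal{H}_e=\mathbb{C}[G]$, first I would compute that for any $g\in G$, $\gamma\in\widehat{G}$,
\[
\operatorname{tr}(X_g Z_\gamma)=\sum_{h\in G}\bra{h}X_g Z_\gamma\ket{h}=\sum_{h\in G}\gamma(h)\,\delta_{h,g+h}=\delta_{g,0}\sum_{h\in G}\gamma(h)=|G|\,\delta_{g,0}\,\delta_{\gamma,1},
\]
where the last equality is the orthogonality of characters of a finite abelian group. Using $X_g^{\dagger}=X_{-g}$ and $Z_\gamma^{\dagger}=Z_{\overline{\gamma}}$, and the cyclicity of the trace, I then get
\[
\langle X_g Z_\gamma,\,X_{g'}Z_{\gamma'}\rangle=\operatorname{tr}\bigl(Z_{\overline{\gamma}}X_{-g}X_{g'}Z_{\gamma'}\bigr)=\operatorname{tr}\bigl(X_{g'-g}Z_{\gamma'\overline{\gamma}}\bigr)=|G|\,\delta_{g,g'}\,\delta_{\gamma,\gamma'}.
\]
This shows that $\{X_g Z_\gamma\}_{g\in G,\,\gamma\in\widehat{G}}$ is an orthogonal set in $\operatorname{End}(\mathcal{H}_e)$ of cardinality $|G|^2$, and therefore a basis.

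\textbf{Step 3: pass to the tensor product.} The Hilbert--Schmidt inner product on $\operatorname{End}(\mathcal{H}_{\text{tot}})=\bigotimes_{e\in E}\operatorname{End}(\mathcal{H}_e)$ factors as $\langle \bigotimes_e A_e,\bigotimes_e B_e\rangle=\prod_{e\in E}\langle A_e,B_e\rangle$. Combined with Step 2, this gives
\[
\Bigl\langle \bigotimes_{e\in E}X_{g_e}Z_{\gamma_e},\,\bigotimes_{e\in E}X_{g'_e}Z_{\gamma'_e}\Bigr\rangle=|G|^{|E|}\prod_{e\in E}\delta_{g_e,g'_e}\delta_{\gamma_e,\gamma'_e},
\]
so the proposed family is orthogonal, hence linearly independent, and by the dimension count in Step~1 it is a basis.

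\textbf{Expected obstacle.} There is no real obstacle here; the only point requiring care is the computation of $\operatorname{tr}(X_g Z_\gamma)$ and the resulting character sum, which relies specifically on $G$ being abelian (so that $\widehat{G}$ separates points and the orthogonality relations kill the off-diagonal terms). Everything else is bookkeeping and the tensor-product structure of the trace.
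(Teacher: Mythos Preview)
Your proof is correct and follows essentially the same approach as the paper: reduce to a single edge, show orthogonality of $\{X_g Z_\gamma\}$ under the Hilbert--Schmidt inner product via the character orthogonality relations, and conclude by the dimension count. The only cosmetic difference is that you first compute $\operatorname{tr}(X_g Z_\gamma)$ and then invoke cyclicity of the trace, whereas the paper computes $\langle X_g Z_\gamma, X_h Z_\beta\rangle_{\mathrm{HS}}$ directly using the commutation rule $Z_\gamma X_h=\gamma(h)X_h Z_\gamma$; the content is the same.
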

\begin{proof}
It is enough to prove that $\{ X_gZ_\gamma : g \in G, \gamma \in \widehat{G} \}$ forms an orthogonal basis of $\mathcal{H}_e$. Consider the Hilbert-Schmidt inner product, defined for any two operators $A$ and $B$ as
\begin{align}
\langle A,B \rangle_{\text{HS}} := \operatorname{Tr}(A^{\dagger}B) = \sum_{g \in G} \langle g|A^\dagger B|g \rangle.    
\end{align}
Let us prove that the set $\{ X_gZ_\gamma : g \in G, \gamma \in \widehat{G} \}$, is orthogonal with respect to the Hilbert-Schmidt inner product. Consider the following calculation 
\begin{align*}
    \langle X_gZ_\gamma, X_hZ_\beta \rangle_{\text{HS}} & = \sum_{f \in G} \langle f | (X_gZ_\gamma)^\dagger X_hZ_\beta | f \rangle \\
    & = \overline{\gamma(h-g)} \sum_{f \in G} \langle f | (X_{(-g)} X_h Z_{\overline{\gamma}}Z_\beta | f \rangle  \quad \quad \quad \quad \text{because } Z_\gamma X_h = \gamma(h) X_h Z_\gamma\\
    & = \overline{\gamma(h-g)} \sum_{f \in G} \overline{\gamma(f)}\beta(f) \langle f | -g+h+f \rangle \\
    & = \delta_{g,h}\overline{\gamma(h-g)}  \sum_{f \in G} \overline{\gamma(f)}\beta(f) \\
   & = \delta_{\gamma,\beta} \delta_{g,h}\overline{\gamma(h-g)}|G|,
\end{align*}
The last equality is derived from the orthogonality relations of irreducible characters. Consequently, the operators $X_gZ_\gamma$ form a basis for the space of $|G| \times |G|$ complex matrices.
\end{proof}


For error detection, consider the following projector operators. Given $(g, \gamma) \in G \times \widehat{G}$, and a site $s = (v, p) \in V \times P$, define
\begin{align}
    P_{(g, \gamma)}(s) := \frac{1}{|G|}B_g(p)\sum_{h \in G} \overline{\gamma(h)} A_h(v).
\end{align}where $A_h(v)$ and $B_g(p)$ are the operators in \ref{vertexoperatorg} and \ref{faceoperatorh} respectively.

\begin{proposition}
Let $s$ be a site. The family of operators $\{P_{(g,\gamma)}(s):(g,\gamma)\in G\times \widehat{G}\}$ forms a complete set of projectors of $\cH_{tot}$, which means
\begin{enumerate}
    \item $P_{(g,\gamma)}^2=P_{(g,\gamma)}$,
    \item $\sum_{(g,\gamma)\in G\times \widehat{G}} P_{(g,\gamma)} =\operatorname{Id}$
    \item $P_{(g,\gamma)}P_{(h,\beta)} = \delta_{g,h} \delta_{\gamma,\beta}P_{(g,\gamma)}$ 
\end{enumerate}
\end{proposition}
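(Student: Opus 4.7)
The plan is to exploit the abelian hypothesis, which through relation $iii$ of the first remark forces $A_h(v)$ and $B_g(p)$ to commute (since $ghg^{-1}=h$). Once commutativity is in hand, I can treat the vertex factor $R_\gamma(v):=\frac{1}{|G|}\sum_{h\in G}\overline{\gamma(h)}A_h(v)$ and the face factor $B_g(p)$ essentially independently, so that $P_{(g,\gamma)}=B_g(p)R_\gamma(v)$ factors as a commuting product. I would then check each of the three properties via a separate short computation, actually deriving (3) first and noting that (1) is the special case $(g,\gamma)=(h,\beta)$.

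First I would show $R_\gamma(v)R_\beta(v)=\delta_{\gamma,\beta}R_\gamma(v)$. Using $A_hA_{h'}=A_{h+h'}$ from relation $i$ and writing $k=h+h'$, the double sum becomes
\begin{align*}
R_\gamma R_\beta=\frac{1}{|G|^{2}}\sum_{k,h}\overline{\gamma(h)}\,\overline{\beta(k-h)}\,A_k=\frac{1}{|G|^{2}}\sum_k \overline{\beta(k)}A_k\sum_h \overline{\gamma(h)}\beta(h),
\end{align*}
and the inner sum equals $|G|\delta_{\gamma,\beta}$ by orthogonality of characters of $G$. Combined with the idempotence $B_g(p)B_h(p)=\delta_{g,h}B_g(p)$ from relation $ii$, and the commutation of $B_g(p)$ with every $A_h(v)$, this yields $P_{(g,\gamma)}P_{(h,\beta)}=\delta_{g,h}\delta_{\gamma,\beta}B_g(p)R_\gamma(v)=\delta_{g,h}\delta_{\gamma,\beta}P_{(g,\gamma)}$, giving (3) and hence (1).

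For completeness (2), I would interchange the order of summation:
\begin{align*}
\sum_{(g,\gamma)\in G\times\widehat{G}}P_{(g,\gamma)}=\frac{1}{|G|}\sum_{g\in G}B_g(p)\sum_{h\in G}A_h(v)\sum_{\gamma\in\widehat{G}}\overline{\gamma(h)}.
\end{align*}
By orthogonality of characters in the dual, $\sum_\gamma \overline{\gamma(h)}=|G|\delta_{h,e}$, so the right-hand side collapses to $\bigl(\sum_g B_g(p)\bigr)A_e(v)$, which equals $\operatorname{Id}$ by relation $iv$.

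I do not foresee a real obstacle; the argument is a routine consequence of the defining relations in the first remark together with the two orthogonality relations for $G$ and $\widehat{G}$. The only point requiring care is the clean use of abelianness to commute the vertex and face factors: without this, the factorization $P_{(g,\gamma)}=B_g(p)R_\gamma(v)$ would not survive the manipulations above, and the proof would need to be reorganized using relation $iii$ in its non-trivial conjugated form.
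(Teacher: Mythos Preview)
Your proposal is correct and is exactly the straightforward computation the paper alludes to but omits; the paper's proof reads in its entirety ``This is a straightforward computation and will be omitted for brevity.'' Your factorization $P_{(g,\gamma)}=B_g(p)R_\gamma(v)$ together with the two character-orthogonality identities and relations $i$--$iv$ is the natural way to carry it out, and nothing more is needed.
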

\begin{proof}
This is a straightforward computation and will be omitted for brevity.
\end{proof}

Let us see how projectors $P_{(g,\gamma)}$ help us to detect errors on the ground state $V_{gs}$. Let $s =(v,p)$ be a site  of the lattice and consider the operators $P_{(g,\gamma)}(s)$. For each site \( s \) and vector \( \ket{\psi} \) in \( V_{gs} \), we have
\[
    P_{(g,\gamma)}(s)\ket{\psi} =\delta_ {g,0}\delta_{\gamma,\mathbf{1}}\ket{\psi},
\]
where \( \mathbf{1} \in \widehat{G} \) is the trivial character. Moreover, let $\cP : \cH_{tot} \to V_{gs}$ be the projector onto $V_{gs}$, then
\begin{align}\label{proyector}
\cP = \bigotimes_{(v,p) \in V \times P} P_{0,\mathbf{1}}(v,p).
\end{align}
If a single error occurs on an edge \(e \in E\) of the lattice \(\mathcal{L}\), specifically when an operator of the form \(X_hZ_\beta\) acts on edge \(e\), it transforms the vector \(\ket{\psi} \in V_{gs}\) into \((X_hZ_\beta)_e \ket{\psi}\).
 We know that edge \( e \) connects two vertices and two faces; as seen in Figure \ref{unerror}. Moreover, \( e \) points to one of its two vertices; in this case, it points to \( v_2 \). On the other hand, \( e \) is oriented in the counterclockwise direction relative to some of the faces; in Figure \ref{unerror} it is the face \( p_1 \). 
\begin{figure}[h]
    \centering
  \includegraphics[scale=1]{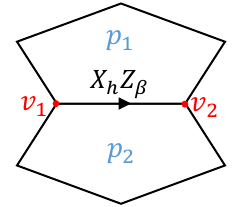}
   \caption{The error $X_hZ_\beta$ occurs on the edge that connects the vertices $v_1$ and $v_2$. The edge also belongs to the boundary of the faces $p_1$ and $p_2$}
   \label{unerror}
\end{figure}

Hence,  for the site $(v_2,p_1)$,
\begin{align*}
    P_{(g,\gamma)}(v_2,p_1)(X_hZ_\beta)_e\ket{\psi} & = \left( \frac{1}{|G|}\sum_{k \in G}\overline{\gamma(k)}B_g(p_1)A_k(v_2)\right) (X_hZ_\beta)_e\ket{\psi}\\
    & = (X_h)_e \left( \frac{1}{|G|}\sum_{k \in G}\overline{\gamma(k)}B_{g-h}(p_1)A_k(v_2)\right)(Z_\beta)_e\ket{\psi}\\
    & = (X_hZ_\beta)_e\left( \frac{1}{|G|}\sum_{k \in G}\overline{\gamma(k)}\overline{\beta(-k)}B_{g-h}(p_1)A_k(v_2)\right)\ket{\psi}\\
    & = (X_hZ_\beta)_e\left( \frac{1}{|G|}\sum_{k \in G}\overline{\gamma(k)}\beta(k)A_k(v_2)\right)B_{g-h}(p_1)\ket{\psi}\\
    & = \delta_{g,h}(X_hZ_\beta)_e\left( \frac{1}{|G|}\sum_{k \in G}\overline{\gamma(k)}\beta(k)A_k(v_2)\right)\ket{\psi}\\
    & =\delta_{g,h}\delta_{\gamma,\beta}(X_hZ_\beta)_e\ket{\psi}.
\end{align*}
The above calculation applies to the case depicted in Figure \ref{unerror}. In the same way, we have have similar results for all other possible scenarios, that is, different orientations of the edge with respect to the site. Likewise, this calculation demonstrates the behavior of the operators $P_{g,\gamma}$ when there is an error on a single edge. Proposition \ref{long} extends these calculations and illustrates the behavior of the $P_{g,\gamma}$ operators in situations where errors affect multiple edges.
\begin{proposition}\label{long}
Let $s=(v,p)$ be a site of $\cL$, $\cP : \cH_{tot} \to V_{gs}$ be the projector onto $V_{gs}$ and $R = \bigotimes_{e \in E} X_{g_e}Z_{\gamma_e}$ be an error operator. Then $P_{g,\gamma}(v,p) R \cP = \delta_{g,g'}\delta_{\gamma,\gamma '}R \cP$ where
\begin{align}
    \gamma ' = \prod_{e \in star(v)}\overline{\gamma_e}^{\varepsilon(e,v)} && g' = \sum_{e \in \partial p}\varepsilon(e,p)(g_e)
\end{align}
\end{proposition}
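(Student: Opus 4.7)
The plan is to generalize the single-edge calculation already displayed before the proposition by exploiting two facts: (i) the projector $\cP$ is stabilized by every $A_h(v)$ and every $\widehat{B}_\beta(p)$ (Proposition \ref{bprimas}), so once these local operators are moved to the right of $R$ they disappear into $\cP$; and (ii) each of $X_h^{\varepsilon(e,v)}$ and $Z_\gamma^{\varepsilon(e,p)}$ commutes with every $X_{g_e}$, $Z_{\gamma_e}$ on edges outside $\operatorname{star}(v)$, respectively $\partial p$. Therefore the only nontrivial work is to compute the phase acquired when $A_h(v)$ is pushed past $R$ on edges of $\operatorname{star}(v)$, and when $\widehat{B}_\beta(p)$ is pushed past $R$ on edges of $\partial p$.

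For the vertex contribution, on each $e\in\operatorname{star}(v)$ the relation $Z_{\gamma_e}X_h=\gamma_e(h)X_hZ_{\gamma_e}$ gives
\begin{align*}
X_h^{\varepsilon(e,v)}\bigl(X_{g_e}Z_{\gamma_e}\bigr)=\overline{\gamma_e(h)}^{\,\varepsilon(e,v)}\bigl(X_{g_e}Z_{\gamma_e}\bigr)X_h^{\varepsilon(e,v)},
\end{align*}
(the $\varepsilon(e,v)=-1$ case follows by replacing $h$ with $-h$ in the identity). Multiplying over $e\in\operatorname{star}(v)$ yields $A_h(v)R=\gamma'(h)\,R\,A_h(v)$, and by Proposition \ref{bprimas} the factor $A_h(v)$ acts as the identity on $\cP$. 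For the face contribution, $\widehat{B}_\beta(p)=\bigotimes_{e\in\partial p}Z_\beta^{\varepsilon(e,p)}$ and $Z_\beta X_{g_e}=\beta(g_e)X_{g_e}Z_\beta$ give, edge by edge, a phase $\beta(g_e)^{\varepsilon(e,p)}$; the total phase is $\beta\!\bigl(\sum_{e\in\partial p}\varepsilon(e,p)g_e\bigr)=\beta(g')$, so $\widehat{B}_\beta(p)R\cP=\beta(g')R\cP$.

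Combining these in
\begin{align*}
P_{g,\gamma}(v,p)R\cP=\frac{1}{|G|^2}\sum_{h\in G}\sum_{\beta\in\widehat{G}}\overline{\gamma(h)}\,\overline{\beta(g)}\,\widehat{B}_\beta(p)A_h(v)R\cP
\end{align*}
and using the two commutation relations together with $A_h(v)\cP=\cP$ and $\widehat{B}_\beta(p)\cP=\cP$ (noting that $\widehat{B}_\beta(p)$ commutes with $A_h(v)$ since $G$ is abelian, or simply commutes past it after pulling out the scalar), the sum decouples into
\begin{align*}
\Bigl(\tfrac{1}{|G|}\sum_{h}\overline{\gamma(h)}\gamma'(h)\Bigr)\Bigl(\tfrac{1}{|G|}\sum_{\beta}\overline{\beta(g)}\beta(g')\Bigr)R\cP,
\end{align*}
and orthogonality of characters collapses these factors to $\delta_{\gamma,\gamma'}\delta_{g,g'}$, giving the claim. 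The only delicate step is a careful bookkeeping of the two sign conventions $\varepsilon(e,v)$ and $\varepsilon(e,p)$ so that the exponents on the characters line up with the stated formulas for $\gamma'$ and $g'$; everything else is a direct application of the Weyl-like commutation $Z_\gamma X_g=\gamma(g)X_g Z_\gamma$ and character orthogonality.
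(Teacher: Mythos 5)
Your proof is correct and follows essentially the same route as the paper's: push the local stabilizer operators past $R$, collect the resulting phases, absorb the leftover operators into $\cP$, and apply character orthogonality. The only (cosmetic) difference is that you expand the face part in the character basis $\widehat{B}_\beta(p)$ and use $\frac{1}{|G|}\sum_\beta\overline{\beta(g)}\beta(g')=\delta_{g,g'}$, whereas the paper keeps $B_g(p)$ in the group basis, tracks the index shift $B_g\mapsto B_{g-g'}$, and invokes $B_{g-g'}(p)\cP=\delta_{g,g'}\cP$ at the end; these are the same computation on opposite sides of the Fourier transform.
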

\begin{proof} Let $e$ an edge with error $X_{g_e}Z_{\gamma_e}$. If $e \not\in \partial p$ then $B_g(p) (X_{g_e}Z_{\gamma_e}) = (X_{g_e}Z_{\gamma_e}) B_g(p) $, and if $e \in \partial p$, then $B_g(p) (X_{g_e}Z_{\gamma_e}) = (X_{g_e}Z_{\gamma_e}) B_{g + \varepsilon(e,p)g_e}(p) $. Furthermore, if $e \not\in star(v)$, then $$\left( \frac{1}{|G|}\sum_{h \in G}\overline{\gamma(h)}A_h(v)\right) (X_{g_e}Z_{\gamma_e}) = (X_{g_e}Z_{\gamma_e})\left( \frac{1}{|G|}\sum_{h \in G}\overline{\gamma(h)}A_h(v)\right).$$
However, if $e \in star(v)$,
$$\left( \frac{1}{|G|}\sum_{h \in G}\overline{\gamma(k)}A_h(v)\right) (X_{g_e}Z_{\gamma_e}) = (X_{g_e}Z_{\gamma_e})\left( \frac{1}{|G|}\sum_{h \in G}\overline{\gamma(h)}~\overline{\gamma_e(h)}^{\varepsilon(e,v)}A_h(v)\right).$$
So \begin{align*}
    P_{(g,\gamma)}(v,p)R & = P_{(g,\gamma)} (v,p)\bigotimes_{e \in E}(X_{g_e}Z_{\gamma_e})\\
    & = \left( \frac{1}{|G|}\sum_{h \in G}\overline{\gamma(h)}A_h(v)B_{g}(p)\right)\bigotimes_{e \in E}(X_{g_e} Z_{\gamma_e}) \\
   & = \bigotimes_{e \in E}(X_{g_e}Z_{\gamma_e}) \left( \frac{1}{|G|}\sum_{h \in G}\overline{\gamma(h)}\gamma'(h)A_h(v)B_{g- g'}(p)\right) ,
\end{align*}
where $\gamma ' = \prod_{e \in star(v)}\overline{\gamma_e}^{\varepsilon(e,v)}$ and $ g' = \sum_{e \in \partial p}\varepsilon(e,p)(g_e)$. Also, since $\cP$ is the projector onto $V_{gs}$, for every face $p \in P$, we have $B_{h}(p)\cP = \delta_{h,e} \cP$, and for every vertex $v \in V$, $A_h(v) \cP = \cP$. Therefore,
\begin{align*}
    P_{(g,\gamma)}(v,p) R \cP & =  \bigotimes_{e \in E}(X_{g_e}Z_{\gamma_e}) \left( \frac{1}{|G|}\sum_{h \in G}\overline{\gamma(h)}\gamma'(h)A_h(v)B_{g- g'}(p)\right)  \cP\\
    & =  \bigotimes_{e \in E}(X_{g_e}Z_{\gamma_e}) \left( \frac{1}{|G|}\sum_{h \in G}\overline{\gamma(h)}\gamma'(h)A_h(v)\right)B_{g- g'}(p)\cP\\
    & = \delta_{g,g'} \bigotimes_{e \in E}(X_{g_e}Z_{\gamma_e}) \left( \frac{1}{|G|}\sum_{h \in G}\overline{\gamma(h)}\gamma'(h)A_h(v)\right)  \cP\\
    & =\delta_{g,g'}\delta_{\gamma,\gamma'} \bigotimes_{e \in E}(X_{g_e}Z_{\gamma_e})\\
    & =  \delta_{g,g'}\delta_{\gamma,\gamma'} 
 R \cP.
\end{align*}
\end{proof}
 Proposition \ref{long} implies that for a given error operator $R$ acting on $V_{gs}$, and for a specific site $s = (v, p)$, there exists a unique projector operator $P_{(g,\gamma)}(v,p)$ such that $P_{(g,\gamma)}(v,p)R \cP = R\cP$. This particular operator encodes error information in its indices: the first index $g$ captures information about the $X$-type errors on edges in the boundary of the face $p$, while the second index $\gamma$ captures information about $Z$-type errors on edges that are incident to $v$. Let us see this in an example.

\begin{example} \label{exdeteccion}
Let $G=\Z_2 \times \Z_4$, then $\widehat{G}= \{ \gamma_{a,b}: a \in \Z_2, b \in \Z_4\}$ where \begin{align*}
    \gamma_{a,b}(1,0)= (-1)^a, && \gamma_{a,b}(0,1)= i^b
\end{align*}
Consider the vertex, face, and edges labeled on the left side of Figure \ref{deteccion}  and suppose that on each of the labeled edges, an error $R$ applies the operators shown on the right of Figure \ref{deteccion}. The operator $X_{(1,1)}Z_{\gamma_{1,0}}$ acts on the edge $e_1$ and since $e_1$ is pointed to $v$ then $\overline{\gamma_1}^{\varepsilon(e_1,v)} = \gamma_{1,0}$, on the other hand, $e_2$ is pointed away from $v$ so $\overline{\gamma_2}^{\varepsilon(e_2,v)} = (\gamma_{0,3})^{-1}$, continuing in this way we have $$ \gamma ' = \prod_{e \in star(v)}\overline{\gamma_e}^{\varepsilon(e,v)}  = \gamma_{1,0}(\gamma_{0,3})^{-1}(\gamma_{1,3})^{-1}\gamma_{0,0}= \gamma_{0,2}.$$
Similarly, but now looking at the $X$-type errors, we have $$g' = \sum_{e \in \partial p}\varepsilon(e,p)(g_e) = (0,0)-(1,0)+(0,2)+(0,0) = (1,2) .$$
Then according to Proposition \ref{long} the only operator $P_{g,\gamma}$ on the site $(v,p)$ that acts as the identity on $R\cP$ is the operator $P_ {(1,2),\gamma_{0,2}}(v,p)$.
\end{example}

\begin{figure}[h]
    \centering
  \includegraphics[scale=1]{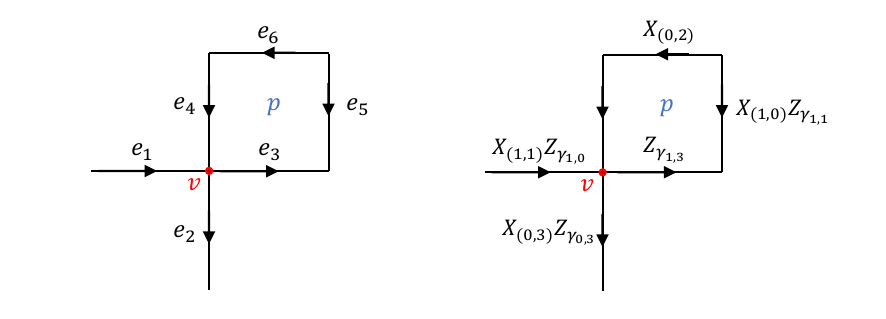}
   \caption{On the left we indicate the vertex $v$, the face $p$ and the edges $e_1,e_2,e_3,e_4,e_5,e_6$. On the right, we see the errors that have acted on each of the edges.}
   \label{deteccion}
\end{figure}

Note that for sites $s = (v,p)$ and $s' = (v,p')$ sharing the same vertex $v$, the following holds: Given any error operator $R$, if $P_{g,\gamma}(v,p)(R\cP)=P_{g',\gamma'}(v,p')(R\cP) = R\cP$, then it must be that $\gamma = \gamma'$. Similarly, consider sites $s = (v,p)$ and $s' = (v',p)$ with the same face $p$. If $P_{g,\gamma}(v,p)(R\cP) =P_{g',\gamma'}(v',p)(R\cP) = R\cP$, then it follows that $g = g'$. This implies that for the operator $P_{g,\gamma}(v,p)$, which acts as the identity on $R\cP$, the index $\gamma$ depends only on the vertex $v$, and the index $g$ depends only on the face $p$.

\begin{definition} \label{defsymdrome}
Let $\ket{\psi} \in V_{gs}$ and $R$ be an error operator such that $R\ket{\psi} = \ket{\psi'}$.
\begin{itemize}
    \item Let $v$ be a vertex, and $P_{g,\gamma}$ such that $P_{g,\gamma}(v,p)\ket{\psi'} = \ket{\psi'}$, then we define the syndrome of the error $R$ on the vertex $v$ as $Syn_{R}(v) =\gamma$.
    \item Let $p$ be a face, and $P_{g,\gamma}$ such that $P_{g,\gamma}(v,p)\ket{\psi'} = \ket{\psi'}$, then we define the syndrome of the error $R$ on the face $p$ as $Syn_{R}(p) =g$.
\end{itemize}
\end{definition}

\begin{remark}
    Note that although Definition \ref{defsymdrome} used a state $\ket{\psi} \in V_{gs}$ to determine the syndrome of the error $R$, this syndrome does not depend on $\ket{\psi}$.
\end{remark}

In conclusion, the error detection process for $R\ket{\psi} =\ket{\psi'}$ would be as follows.
\begin{itemize}
    \item[1)] For each site $(v,p)$ we measure the state $\ket{\psi'}$ with respect to the set of the projectors $\{P_{g,\gamma}(v,p)\ | \ g \in G, \gamma \in \hat{G}\}$. The outcome of the measurement tells us for which $(g,\gamma)$ we have $P_{g,\gamma}(v,p)\ket{\psi'} = \ket{\psi'}$.  
    \item[2)] Select the sites $(v,p)$ for which $P_{g,\gamma}(v,p)\ket{\psi'} = \ket{\psi'}$ with $P_{g,\gamma} \neq P_{0,\textbf{1}}$. (If there are sites where this occurs, it implies that an error has affected the total space). 
    \item[3)] For sites $(v,p)$ of the previous item, we label the vertex $v$ with $Syn_{R}(v)^{-1}$ and the face $p$ with the number $-Syn_{R}(p)$, (Taking the inverse will make sense later.).
\end{itemize}

\begin{figure}[h]
    \centering
  \includegraphics[scale=0.9]{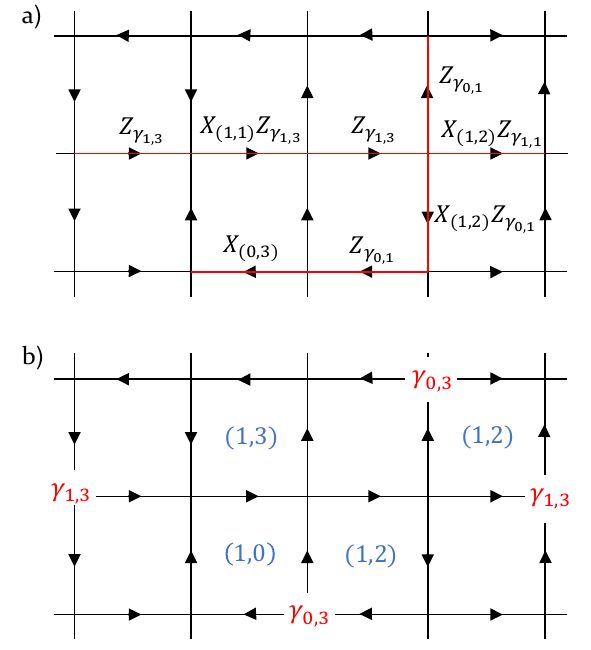}
   \caption{In a) are the errors that have occurred. In b) is the information extracted by the operators $P_{g,\gamma}$.}
   \label{deteccion2}
\end{figure} 

\begin{example}
Let $\cL$ a square lattice, and $G= \Z_2 \times \Z_4$. Figure \ref{deteccion2} in a) shows the errors that have occurred. The operators $P_{g,\gamma}$ give us the syndrome measurement, so in b), we see how we have labeled vertices $v$ with $Syn_{R}(v)^{-1}$, and faces $p$ with $-Syn_{R}(p)$. These labels give us information that we will use to correct the errors.
\end{example}

\section{Analysis of the error through its syndrome} \label{syndrome}

This section examines the information provided by the syndrome at each vertex and face, that is $Syn_{R}(v)$ and $Syn_{R}(p)$, respectively. An error $R =  \bigotimes_{e \in E}X_{ g_e}Z_{\gamma_e}$ can be seen as $R= R_XR_Z$ where $R_X = \bigotimes_{e \in E}X_{ g_e}$ and $R_Z =\bigotimes_{e \in E}Z_{\gamma_e}$. When $Syn_{R}(v) \neq \mathbf{1}$ for some vertex, then $R_Z \neq \operatorname{Id}$. Similarly, if $Syn_{R}(p) \neq 0$ for some face, then $R_X \neq \operatorname{Id}$. However, the converse statement does not generally hold. Thus, there exist errors where $R \neq \operatorname{Id}$ while $Syn_{R}(v) = \mathbf{1}$ and $Syn_{R}(p) = 0$ for all $p$ and $v$. To understand the limitations of the detection process, we will focus on characterizing the error operators that result in $Syn_{R}(v) = \mathbf{1}$ and $Syn_{R}(p) = 0$ for all $p$ and $v$.

First, for an error $R$, we must understand the subgraph of $\mathcal{L}$ composed of edges affected by $R$. Therefore, we will make some observations from graph theory.

\subsection{Preliminaries on graph theory}

First, we will recall the notion of a dual lattice and some basic definition and results from graph theory that will be important for our discussion.
 
\begin{definition}
The dual $\cL^*$ of a lattice $\cL$ on a surface $\Sigma$ is a lattice on $\Sigma$ whose vertices correspond to the faces of $\cL$ and whose edges correspond to the edges of $\cL$. The edge $e^* \in E(\cL^*)$ that corresponds to $e \in E(\cL)$ connects the vertices corresponding to the faces of $\cL$ whose boundary contains $e$.

A canonical embedding of the dual lattice \(\cL^*\) positions the dual vertex \(p^*\) inside each face \(p\) of \(\cL\). For any edge \(e\), which lies on the boundary between two faces \(p_1\) and \(p_2\), a curve is drawn from \(p_1^*\) to \(p_2^*\), ensuring these curves do not intersect. Each curve creates a dual edge \(e^*\). See Figure \ref{dual} for a visual representation of \(\cL^*\), where solid lines depict the lattice \(\cL\) and dashed lines represent \(\cL^*\).

\end{definition}
\begin{figure}[h!]
    \centering
\begin{tikzpicture}[x=0.65pt,y=0.65pt,yscale=-1,xscale=1]

\draw   (46,222) -- (62.5,167) -- (113.5,167) -- (130,222) -- cycle ;
\draw    (130,222) -- (256,222) ;
\draw    (113.5,167) -- (186,222) ;
\draw    (113.5,167) -- (197,167) ;
\draw    (197,167) -- (256,222) ;
\draw    (267,149) -- (256,222) ;
\draw    (196,118) -- (197,167) ;
\draw    (251,104) -- (196,118) ;
\draw    (267,149) -- (251,104) ;
\draw    (113.5,167) -- (141,118) ;
\draw    (141,118) -- (196,118) ;
\draw    (62.5,167) -- (50,123) ;
\draw    (50,123) -- (96,98) ;
\draw    (96,98) -- (141,118) ;
\draw    (96,98) -- (145,80) ;
\draw    (145,80) -- (196,118) ;
\draw    (145,80) -- (232,77) ;
\draw    (251,104) -- (232,77) ;
\draw    (96,98) -- (89,43) ;
\draw    (89,43) -- (153,43) ;
\draw    (153,43) -- (217,44) ;
\draw    (153,43) -- (145,80) ;
\draw    (217,44) -- (232,77) ;
\draw    (89,43) -- (50,123) ;
\draw   (90,135.5) .. controls (90,134.12) and (91.12,133) .. (92.5,133) .. controls (93.88,133) and (95,134.12) .. (95,135.5) .. controls (95,136.88) and (93.88,138) .. (92.5,138) .. controls (91.12,138) and (90,136.88) .. (90,135.5) -- cycle ;
\draw   (182,60.5) .. controls (182,59.12) and (183.12,58) .. (184.5,58) .. controls (185.88,58) and (187,59.12) .. (187,60.5) .. controls (187,61.88) and (185.88,63) .. (184.5,63) .. controls (183.12,63) and (182,61.88) .. (182,60.5) -- cycle ;
\draw   (78,88.5) .. controls (78,87.12) and (79.12,86) .. (80.5,86) .. controls (81.88,86) and (83,87.12) .. (83,88.5) .. controls (83,89.88) and (81.88,91) .. (80.5,91) .. controls (79.12,91) and (78,89.88) .. (78,88.5) -- cycle ;
\draw   (204,95.5) .. controls (204,94.12) and (205.12,93) .. (206.5,93) .. controls (207.88,93) and (209,94.12) .. (209,95.5) .. controls (209,96.88) and (207.88,98) .. (206.5,98) .. controls (205.12,98) and (204,96.88) .. (204,95.5) -- cycle ;
\draw   (85.5,197) .. controls (85.5,195.62) and (86.62,194.5) .. (88,194.5) .. controls (89.38,194.5) and (90.5,195.62) .. (90.5,197) .. controls (90.5,198.38) and (89.38,199.5) .. (88,199.5) .. controls (86.62,199.5) and (85.5,198.38) .. (85.5,197) -- cycle ;
\draw   (140,205.5) .. controls (140,204.12) and (141.12,203) .. (142.5,203) .. controls (143.88,203) and (145,204.12) .. (145,205.5) .. controls (145,206.88) and (143.88,208) .. (142.5,208) .. controls (141.12,208) and (140,206.88) .. (140,205.5) -- cycle ;
\draw   (185,197.5) .. controls (185,196.12) and (186.12,195) .. (187.5,195) .. controls (188.88,195) and (190,196.12) .. (190,197.5) .. controls (190,198.88) and (188.88,200) .. (187.5,200) .. controls (186.12,200) and (185,198.88) .. (185,197.5) -- cycle ;
\draw   (160,142.5) .. controls (160,141.12) and (161.12,140) .. (162.5,140) .. controls (163.88,140) and (165,141.12) .. (165,142.5) .. controls (165,143.88) and (163.88,145) .. (162.5,145) .. controls (161.12,145) and (160,143.88) .. (160,142.5) -- cycle ;
\draw   (233,156.5) .. controls (233,155.12) and (234.12,154) .. (235.5,154) .. controls (236.88,154) and (238,155.12) .. (238,156.5) .. controls (238,157.88) and (236.88,159) .. (235.5,159) .. controls (234.12,159) and (233,157.88) .. (233,156.5) -- cycle ;
\draw   (143,104.5) .. controls (143,103.12) and (144.12,102) .. (145.5,102) .. controls (146.88,102) and (148,103.12) .. (148,104.5) .. controls (148,105.88) and (146.88,107) .. (145.5,107) .. controls (144.12,107) and (143,105.88) .. (143,104.5) -- cycle ;
\draw   (119,65.5) .. controls (119,64.12) and (120.12,63) .. (121.5,63) .. controls (122.88,63) and (124,64.12) .. (124,65.5) .. controls (124,66.88) and (122.88,68) .. (121.5,68) .. controls (120.12,68) and (119,66.88) .. (119,65.5) -- cycle ;
\draw  [dash pattern={on 0.84pt off 2.51pt}]  (145.5,104.5) -- (162.5,140) ;
\draw  [dash pattern={on 0.84pt off 2.51pt}]  (187.5,197.5) -- (215,223) ;
\draw  [dash pattern={on 0.84pt off 2.51pt}]  (259,126.5) -- (233,156.5) ;
\draw  [dash pattern={on 0.84pt off 2.51pt}]  (209,95.5) -- (241.5,90.5) ;
\draw  [dash pattern={on 0.84pt off 2.51pt}]  (235.5,159) -- (261.5,185.5) ;
\draw  [dash pattern={on 0.84pt off 2.51pt}]  (95,135.5) -- (160,142.5) ;
\draw  [dash pattern={on 0.84pt off 2.51pt}]  (142.5,205.5) -- (187.5,200) ;
\draw  [dash pattern={on 0.84pt off 2.51pt}]  (88,199.5) -- (140,205.5) ;
\draw  [dash pattern={on 0.84pt off 2.51pt}]  (162.5,145) -- (187.5,195) ;
\draw  [dash pattern={on 0.84pt off 2.51pt}]  (204,95.5) -- (148,104.5) ;
\draw  [dash pattern={on 0.84pt off 2.51pt}]  (124,65.5) -- (184.5,58) ;
\draw  [dash pattern={on 0.84pt off 2.51pt}]  (83,88.5) -- (121.5,65.5) ;
\draw  [dash pattern={on 0.84pt off 2.51pt}]  (80.5,91) -- (92.5,133) ;
\draw  [dash pattern={on 0.84pt off 2.51pt}]  (92.5,138) -- (88,199.5) ;
\draw  [dash pattern={on 0.84pt off 2.51pt}]  (95,135.5) -- (143,104.5) ;
\draw  [dash pattern={on 0.84pt off 2.51pt}]  (124,65.5) -- (145.5,102) ;
\draw  [dash pattern={on 0.84pt off 2.51pt}]  (184.5,63) -- (206.5,93) ;
\draw  [dash pattern={on 0.84pt off 2.51pt}]  (206.5,95.5) -- (235.5,154) ;
\draw  [dash pattern={on 0.84pt off 2.51pt}]  (190,197.5) -- (235.5,154) ;
\draw  [dash pattern={on 0.84pt off 2.51pt}]  (224.5,60.5) -- (184.5,58) ;
\draw  [dash pattern={on 0.84pt off 2.51pt}]  (185,43.5) -- (184.5,63) ;
\draw  [dash pattern={on 0.84pt off 2.51pt}]  (121,43) -- (121.5,63) ;
\draw  [dash pattern={on 0.84pt off 2.51pt}]  (69.5,83) -- (78,88.5) ;
\draw  [dash pattern={on 0.84pt off 2.51pt}]  (56.25,145) -- (90,135.5) ;
\draw  [dash pattern={on 0.84pt off 2.51pt}]  (53.67,197.67) -- (90.5,197) ;
\draw  [dash pattern={on 0.84pt off 2.51pt}]  (88,194.5) -- (89.67,221.67) ;
\draw  [dash pattern={on 0.84pt off 2.51pt}]  (142.5,205.5) -- (156.33,221.67) ;
\draw  [dash pattern={on 0.84pt off 2.51pt}]  (162.5,145) -- (233,156.5) ;
\end{tikzpicture}
   \caption{A generic lattice $\cL$ (solid lines) and its dual lattice $\cL^*$ (dashed lines).}
   \label{dual}
\end{figure}
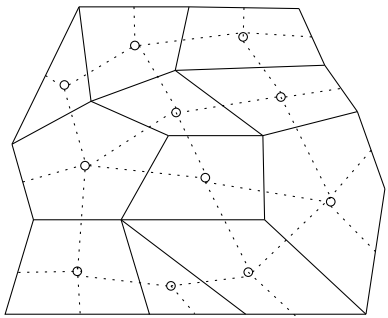

An orientation on the lattice \(\cL\) naturally induces a orientation of its dual \(\cL^*\). For an edge \(e\) in \(\cL\), its dual \(e^*\) in \(\cL^*\) is oriented so that rotating \(e\) counterclockwise to overlap with \(e^*\) aligns their orientations. Figure \ref{fig:orientation dual lattice} shows the orientations for \(e\) in \(\cL\) (solid lines) and \(e^*\) in \(\cL^*\) (dashed lines).

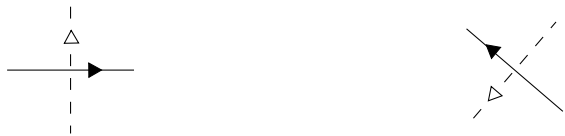
\begin{figure}[h!]
    \centering
    \begin{tikzpicture}[x=0.60pt,y=0.60pt,yscale=-1,xscale=1]

\draw  [dash pattern={on 4.5pt off 4.5pt}]  (64,20) -- (64,100) ;
\draw    (24,60) -- (104,60) ;
\draw    (313.63,33.96) -- (374.37,86.04) ;
\draw  [dash pattern={on 4.5pt off 4.5pt}]  (317.96,90.37) -- (370.04,29.63) ;
\draw  [fill={rgb, 255:red, 0; green, 0; blue, 0 }  ,fill opacity=1 ] (83.5,59.96) -- (75.54,64.54) -- (75.46,55.54) -- cycle ;
\draw  [fill={rgb, 255:red, 0; green, 0; blue, 0 }  ,fill opacity=1 ] (326.05,44.06) -- (335.09,45.66) -- (329.38,52.62) -- cycle ;
\draw  [fill={rgb, 255:red, 255; green, 255; blue, 255 }  ,fill opacity=1 ] (327.4,79.36) -- (329.19,70.36) -- (336.03,76.21) -- cycle ;
\draw  [fill={rgb, 255:red, 255; green, 255; blue, 255 }  ,fill opacity=1 ] (64.42,35) -- (69.08,42.9) -- (60.08,43.09) -- cycle ;
\end{tikzpicture}
    \caption{Orientation induced on the dual lattice.}
    \label{fig:orientation dual lattice}
\end{figure}

\begin{definition} Let $\cL$ be a lattice on a surface $\Sigma$. 

\begin{itemize}
\item  A \emph{path} is a sequence of edges \((e_1, e_2, \ldots, e_{n-1})\) for which there exists a sequence of vertices \((v_1, v_2, \ldots, v_n)\) such that \(\partial(e_i) = \{v_i, v_{i + 1}\}\) for \(i = 1, 2, \ldots, n-1\), where $v_i \neq v_j$ if $i \neq j$.
\item A \emph{cycle} $\cC$ is a sequence of edges \((e_1, e_2, \ldots, e_{n-1})\) for which there exists a sequence of vertices \((v_1, v_2, \ldots, v_n)\) such that \(\partial(e_i) = \{v_i, v_{i + 1}\}\) for \(i = 1, 2, \ldots, n-1\). For any $i \neq j$ , it holds that $v_i = v_j$ if and only if $i,j \in \{ 1,n\}$. 

\item A cycle is called \emph{contractible} if it bounds a disk in $\Sigma$. A subgraph $\G\subset \cL $ is \emph{contractible} if every cycle  $\cC \subseteq \G$ is  contractible.

\item  A \emph{bridge} is an edge of a graph whose deletion increases the number of connected components of the graph.

\item A \emph{bridgeless graph} is a graph that does not have any bridges.
\end{itemize}
\end{definition}

 \begin{notacion}
   Let $\G$ be a subgraph of $\cL$. Since in this work edges are denoted by the letter $e$, to avoid cumbersome notation, if an edge $e$ belongs to $\G$ we will write $e \in \G$ instead of $e \in E(\G)$. Similarly, as vertices are denoted by the letter $v$, if a vertex $v$ belongs to $\G$ we will write $v \in \G$ instead of $v \in V(\G)$.
\end{notacion}

Note that if $L$ is a bridgeless graph, then every edge in $L$ must be part of a cycle. Consequently, $L$ can be covered by a finite set of cycles $\{ \cC_i\}_{i=1}^{n}$, meaning that $L = \bigcup_{i=1}^{n} \cC_i$. In this situation, we refer to the set of cycles $\{ \cC_i\}_{i=1}^{n}$ as a \emph{cycle cover} of $L$. It is important to note that cycle covers are not unique. Moreover, to study different properties of graphs we are going to use a specific kind of covers.

\begin{definition}[Ear decomposition]
    An ear decomposition of a graph $\G$ is
    a sequence of subgraphs of $\G$, say $\G_1 \subset \cdots \subset \G_n$, such that $\G_1$ is a cycle, $\G_n = \G$, and $\G_{i+1} = \G_i \cup \rho_{i+1}$ where $\rho_{i+1}$ can be one of the two:
    \begin{itemize}
        \item[1)] A path internally disjoint from $\G_i$ with both ends in $\G_i$, or
        \item[2)] A cycle with only one vertex in $\G_i$.
    \end{itemize}    
\end{definition}

Proposition \ref{ear} is a well-known result in graph theory, and its proofs can be found in, for example, \cite[Theorem 5.2.4]{graphbook}.

\begin{proposition} \label{ear}
A graph $\G$ is bridgeless if and only if it has an ear decomposition. \qed
\end{proposition}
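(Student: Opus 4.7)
The plan is a proof by induction in both directions, centered on the observation that an ear decomposition exactly captures the property that every edge lies on a cycle.

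For the forward direction, I would argue by induction on the index $i$ of the sequence $\G_1 \subset \cdots \subset \G_n$ that each $\G_i$ is bridgeless. The base case is immediate since $\G_1$ is a cycle. For the inductive step, assume $\G_i$ is bridgeless and consider $\G_{i+1} = \G_i \cup \rho_{i+1}$. Any old edge $e \in \G_i$ lies in a cycle of $\G_i \subseteq \G_{i+1}$, so it remains non-bridge. For a new edge $e \in \rho_{i+1}$, if $\rho_{i+1}$ is a cycle with one vertex in $\G_i$ then $e$ already sits on that cycle. If $\rho_{i+1}$ is a path with both endpoints $u,u' \in \G_i$, I would invoke the (assumed) fact that $\G_i$ is connected along with $\G_1$ being a cycle to produce a $u$-to-$u'$ walk in $\G_i$; concatenating it with $\rho_{i+1}$ yields a closed walk through $e$, from which a cycle through $e$ is extracted. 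Hence no edge of $\G_{i+1}$ is a bridge.

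For the backward direction, I would first reduce to the case where $\G$ is connected (otherwise apply the construction to each component, since the definition already allows starting with an arbitrary cycle). Since $\G$ is bridgeless, every edge lies on a cycle; pick any such cycle as $\G_1$. Now suppose inductively that $\G_i$ has been built and $\G_i \subsetneq \G$. By connectedness of $\G$ and the fact that $\G_i$ contains at least one vertex, there exists an edge $e \notin \G_i$ with at least one endpoint $u \in V(\G_i)$. Because $\G$ is bridgeless, $e$ lies on some cycle $\cC$ of $\G$. Starting at $u$, traverse $\cC$ through $e$ and continue until the walk first revisits $V(\G_i)$ at some vertex $u'$; let $\rho_{i+1}$ be this initial segment of $\cC$. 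If $u' \neq u$, then $\rho_{i+1}$ is a path internally disjoint from $\G_i$ with both endpoints in $\G_i$ (ear of the first type). If $u' = u$, then $\rho_{i+1}$ is a cycle meeting $\G_i$ only at $u$ (ear of the second type). Either way, $\G_{i+1} := \G_i \cup \rho_{i+1}$ strictly enlarges $\G_i$, so since $\G$ is finite the process terminates with $\G_n = \G$.

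The main obstacle I anticipate is in the backward direction: guaranteeing that the segment of $\cC$ starting at $u$ along $e$ and ending at the first return to $\G_i$ really is internally disjoint from $\G_i$ and well-defined. This requires care in ordering the edges of $\cC$ (a cycle has two traversal directions; I fix the one starting with $e$ at $u$) and in verifying that ``first return to $V(\G_i)$'' is a meaningful notion, which follows because $\cC$ is finite and $u \in V(\G_i)$, so the walk eventually hits $V(\G_i)$ again. A minor subtlety is the possibility that $e$ itself has both endpoints already in $V(\G_i)$, in which case $\rho_{i+1} = e$ is a trivial path ear of length one; this degenerate case is handled by the same construction and does not require separate treatment.
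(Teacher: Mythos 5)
The paper offers no proof of this proposition; it is cited as a well-known result (\cite[Theorem~5.2.4]{graphbook}), so there is no ``paper's approach'' to compare against. Your argument is the standard textbook proof and is essentially correct: the forward induction is sound (an edge is a bridge iff it lies on no cycle, each $\G_i$ is connected because every ear meets the previous stage, and the concatenation of a path ear with a connecting path inside $\G_i$ uses the new edge exactly once, so a cycle through it can be extracted), and the backward greedy ear-attachment is the classical construction, with the ``first return to $V(\G_i)$'' segment well-defined and internally disjoint from $\G_i$ exactly as you describe. The one caveat worth recording concerns connectivity: any graph admitting an ear decomposition in the paper's sense is necessarily connected, yet two disjoint cycles form a bridgeless graph, so the proposition as literally stated fails for disconnected (or edgeless) graphs, and your reduction ``apply the construction to each component'' yields an ear decomposition of each component rather than of $\G$ itself. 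This is an imprecision inherited from the statement rather than a defect of your construction --- the paper only ever invokes the result component by component (as in the proof of Theorem~\ref{multiloop}) --- but it would be cleaner to state the claim for connected bridgeless graphs with at least one edge, which is the case your proof actually establishes.
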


For a bridgeless graph $L$, we will construct a cycle cover using an ear decomposition. Let $\mathcal{G}_1 \subset \cdots \subset \mathcal{G}_n = L$ be an ear decomposition of $L$. Firstly, it is important to note that each of the subgraphs $\mathcal{G}_i$ is bridgeless, as the sequence $\G_1 \subset \cdots \subset \G_i$ constitutes an ear decomposition of $\G_i$. By applying Proposition \ref{ear}, we deduce that $\G_i$ is bridgeless.

Now, let us proceed to build a cycle cover for $L$. We initiate by taking the first cycle $\cC_1 := \G_1$. The remaining $\cC_i$'s will be defined recursively. Recall that $\G_{i} = \G_{i-1} \cup \rho_{i}$, where $\rho_i$ is either a cycle or a path. If $\rho_{i}$ is a cycle, we set $\cC_i := \rho_{i} $. If $\rho_{i}$ is a path with endpoints $v_1, v_2$ that belong to $\G_{i-1}$, and since $\G_{i-1}$ is bridgeless, there must be a path $\rho'_i \subset \G_{i-1}$ connecting $v_1$ with $v_2$. In such a case, we define  $\cC_i:= \rho_{i} \cup \rho'_{i}$. A covering constructed this way will be call an \emph{ear-cycle cover}.
 
\begin{proposition} \label{noloopforma}
Let $\G$ be a graph, such that every vertex has degree more than one. Then, $\G$ is either bridgeless, or there exists a maximal bridgeless subgraph $L \subset \G$ in which only one vertex of $L$ is incident to a bridge in $\G$.
\end{proposition}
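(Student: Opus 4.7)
The plan is to build the standard bridge-block decomposition. I assume $\G$ is not bridgeless and let $B$ denote the set of bridges of $\G$. My first step is to show that every connected component of $\G - B$ is bridgeless: if $e$ is an edge of such a component, then $e \notin B$, so $e$ lies in a cycle $\cC$ of $\G$; no edge of a cycle can be a bridge, so $\cC \subseteq \G - B$, and since $\cC$ is connected all of its edges stay in the same component as $e$. Thus $e$ lies in a cycle of its component, so no edge of the component is a bridge.

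Next I would form the bridge-forest $T$ by contracting each component of $\G - B$ to a single node and keeping the bridges as the edges between nodes. A cycle in $T$ would lift to a cycle of $\G$ whose edges are all bridges, contradicting their being bridges, so $T$ is a forest. Because $\G$ has at least one bridge, $T$ has at least one edge and hence a leaf. Let $L$ be the subgraph of $\G$ corresponding to such a leaf; by construction $L$ is a bridgeless subgraph of $\G$, exactly one bridge of $\G$ is incident to $L$, and that bridge meets $L$ in a single vertex $v$.

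The hypothesis that every vertex has degree greater than $1$ is used precisely to ensure that $L$ is non-trivial: were $L$ the single vertex $v$, then $v$'s only incident edge in $\G$ would be the one bridge attached to this leaf, so $\deg_\G(v)=1$, a contradiction. Hence $L$ contains at least one edge, is bridgeless, and has exactly one vertex incident to a bridge of $\G$, as required.

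The last step, which I expect to be the most delicate, is maximality of $L$ among (connected) bridgeless subgraphs. Any $L'\supsetneq L$ would have to contain either a bridge of $\G$ or an edge of a different component of $\G - B$. In the first case, that bridge has an endpoint outside $L$ (else it would close a cycle in $L$ and not be a bridge of $\G$), so it remains a bridge of $L'$; in the second case, because distinct components of $\G-B$ are vertex-disjoint (every non-bridge edge lies in a unique component), extending $L$ requires adjoining new vertices together with bridges connecting the new component to $L$, and each such bridge remains a bridge of $L'$. Either way $L'$ fails to be bridgeless, so $L$ is maximal, completing the proof.
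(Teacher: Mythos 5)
Your proof is correct and follows essentially the same route as the paper: both decompose $\G$ into its maximal bridgeless pieces, form the forest whose nodes are these pieces joined by bridges, and extract a leaf, using the degree hypothesis to rule out a trivial (single-vertex) leaf. The only cosmetic difference is that you keep individual bridges as edges of the forest and verify maximality explicitly, whereas the paper contracts whole bridge-paths into single edges of its auxiliary graph $\mathcal{T}$ and asserts maximality by construction.
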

\begin{proof}
   It is sufficient to prove the case when $\mathcal{G}$ has a single connected component. Suppose $\mathcal{G}$ is not bridgeless. Consider the subgraph of $\mathcal{G}$ consisting of the vertices and edges that belong to a cycle. Let $\mathcal{G}_1, \dots, \mathcal{G}_n$ be the connected components of this subgraph. By construction, each of these connected components is a maximal bridgeless subgraph, meaning that there exists no other bridgeless subgraph that properly contains any $\mathcal{G}_i$. Since $\mathcal{G}$ is connected, the $\mathcal{G}_i$'s are connected through paths composed of bridges, and, by hypothesis, every vertex has a degree greater than one. Consequently, the finals vertices of each of these paths belongs to some $\mathcal{G}_i$.

Consider the graph $\mathcal{T}$ with a vertex set $\{\mathcal{G}_1, \dots, \mathcal{G}_n \}$. There is an edge between $\mathcal{G}_i$ and $\mathcal{G}_j$ if and only if there exists a path composed of bridges in $\mathcal{G}$ that connects $\mathcal{G}_i$ with $\mathcal{G}_j$. By construction, every edge in $\mathcal{T}$ is a bridge; hence, $\mathcal{T}$ has no cycles. Therefore, $\mathcal{T}$ has at most $n-1$ edges, and since $\T$ has $n$ vertices, this implies that at least one vertex is incident to only one edge. Consequently, within $\mathcal{G}$, one of the subgraphs, $\mathcal{G}_i$, is connected only to another $\mathcal{G}_j$ through a path composed of bridges, meaning that only one of the vertices of $\mathcal{G}_i$ is incident to a bridge in $\mathcal{G}$.
 \end{proof}

\subsection{Behavior of an undetectable error}

Now we will present the main results of Section \ref{syndrome}, that correspond to Theorem \ref{multiloop} and Theorem \ref{multiloop contractil}. Theorem \ref{multiloop} describes the graph formed by edges affected by errors when the error is undetectable, i.e., when $Syn_{R}(v) = \mathbf{1}$ for all $v \in V$, and $Syn_{R}(p) = 0$ for all $p \in P$. 

\begin{definition}\hfill
    \begin{itemize}
        \item Given an error  $R= \bigotimes_{e \in E} X_{g_e}Z_{\gamma_e}$, its associated graph $\G(R)$, is defined as the subgraph of $\cL$ formed by the edges $e \in E$ such that $X_{g_e}Z_{\gamma_e} \neq \Id$.
            \item For an error  $R$,  we denote by $\G^*(R) \subset \cL^* $  the dual graph of the graph $\G(R)$, 
        \item For a subgraph $\G \subset \cL$, and an error $ R= \bigotimes_{e \in E} X_{g_e}Z_{\gamma_e}$ we define the restriction of $R$ on $\G$ as $$R|_{\G} := \left( \bigotimes_{e \in \G}X_{g_e}Z_{\gamma_e}\right) \otimes \left( \bigotimes_{e \not\in \G}I_e \right).$$
    \end{itemize}
\end{definition}

\begin{theorem}\label{multiloop} Let $R = R_XR_Z$ be an error on $V_{gs}$
\begin{itemize}
    \item[$\imath$)] If for all $v \in V$, $Syn_{R}(v) = \mathbf{1}$ then $\G(R_Z) \subset \cL$ is bridgeless.
    \item[$\imath \imath$)] If for all $p \in P$, $Syn_{R}(p) = 0$ then $\G^*(R_X) \subset \cL^*$ is bridgeless.
\end{itemize}    
\end{theorem}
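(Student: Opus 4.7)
The plan is to prove each part by contradiction, exploiting a telescoping identity obtained by aggregating syndromes over one side of a hypothetical bridge. The key geometric fact that makes the telescoping work is that in both the primal and dual sign conventions, the two ``sides'' of an edge receive opposite signs: for an edge $e$ with endpoints $v_a, v_b$ we have $\varepsilon(e,v_a)+\varepsilon(e,v_b)=0$ (since $e$ points away from one endpoint and toward the other), and for an edge $e$ bordering faces $p_1, p_2$ we have $\varepsilon(e,p_1)+\varepsilon(e,p_2)=0$ (since traversing one face counterclockwise traverses the shared edge opposite to traversing the other face counterclockwise).

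For part (i), suppose toward contradiction that $e_0$ is a bridge in $\G(R_Z)$ with endpoints $v_1,v_2$, and let $C$ be the vertex set of the connected component of $\G(R_Z)\setminus\{e_0\}$ containing $v_1$; by the bridge property $v_2\notin C$. I would compute the product
\begin{align*}
\prod_{v\in C} Syn_R(v) \;=\; \prod_{v\in C}\prod_{e\in\mathrm{star}(v)} \overline{\gamma_e}^{\varepsilon(e,v)}.
\end{align*}
Any edge $e$ with both endpoints in $C$ contributes $\overline{\gamma_e}^{\varepsilon(e,v_a)+\varepsilon(e,v_b)}=\mathbf{1}$ by the sign cancellation; any edge with both endpoints outside $C$ is absent from the product; any edge not in $\G(R_Z)$ has $\gamma_e=\mathbf{1}$ and contributes trivially. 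The only remaining contribution comes from edges of $\G(R_Z)$ with exactly one endpoint in $C$, and since $C$ is a connected component of $\G(R_Z)\setminus\{e_0\}$ this is exactly the single edge $e_0$. Thus the product equals $\overline{\gamma_{e_0}}^{\varepsilon(e_0,v_1)}$. Under the hypothesis $Syn_R(v)=\mathbf{1}$ for all $v$, this forces $\gamma_{e_0}=\mathbf{1}$, contradicting $e_0\in\G(R_Z)$.

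Part (ii) is the dual analogue. Assume $e_0^*$ is a bridge in $\G^*(R_X)\subset\cL^*$, and let $C^*$ be the vertex set (in $\cL^*$) of the connected component of $\G^*(R_X)\setminus\{e_0^*\}$ containing one endpoint. Vertices of $\cL^*$ are faces of $\cL$, so I would consider
\begin{align*}
\sum_{p^*\in C^*} Syn_R(p) \;=\; \sum_{p^*\in C^*}\sum_{e\in\partial p}\varepsilon(e,p)\,g_e,
\end{align*}
and apply the dual sign-cancellation $\varepsilon(e,p_1)+\varepsilon(e,p_2)=0$ to kill every edge whose two adjacent faces both lie in $C^*$. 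Edges outside $\G(R_X)$ contribute $0$, and the only edge of $\G(R_X)$ crossing $C^*$ is $e_0$, so the sum collapses to $\varepsilon(e_0,p_1)\,g_{e_0}$. Vanishing of all face-syndromes forces $g_{e_0}=0$, contradicting $e_0\in\G(R_X)$.

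The main obstacle will be making the boundary-cancellation identities fully rigorous — in particular, verifying that Definition \ref{signo} indeed yields $\varepsilon(e,v_a)+\varepsilon(e,v_b)=0$ for edges and $\varepsilon(e,p_1)+\varepsilon(e,p_2)=0$ for adjacent faces on an oriented surface, and carefully cataloguing the four possible positions of an edge relative to $C$ (both endpoints in, both out, exactly one in and the edge absent from $\G(R_Z)$, or exactly one in and the edge in $\G(R_Z)$) so that only $e_0$ survives. Once these bookkeeping steps are in place, the contradiction is immediate.
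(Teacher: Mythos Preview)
Your argument is correct and is genuinely different from the paper's proof. The paper argues by case analysis on the degree of vertices in $\G(R_Z)$: a degree-one vertex immediately gives a nontrivial syndrome, while in the degree-$\geq 2$ case the paper invokes Proposition~\ref{noloopforma} to locate a maximal bridgeless subgraph $L$ with a single exit bridge, then uses the ear-decomposition machinery of Lemma~\ref{gammaloops} to write $R_Z|_L$ as a product of loop operators $\widehat{B}_{\gamma_i}(\cC_i)$, and finally appeals to Lemma~\ref{loopsindrome} to conclude that these loop operators cannot cancel the nontrivial syndrome coming from the bridge. Your proof bypasses all of this structure: you simply take the component $C$ on one side of the putative bridge and telescope the product (resp.\ sum) of syndromes over $C$, using the elementary sign identities $\varepsilon(e,v_a)+\varepsilon(e,v_b)=0$ and $\varepsilon(e,p_1)+\varepsilon(e,p_2)=0$ to kill every internal edge and isolate the bridge contribution. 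This is the standard ``flux through a cut'' argument and is both shorter and more elementary than the paper's route. The tradeoff is that the paper's lemmas (\ref{loopsindrome} and \ref{gammaloops}) are reused in the proof of Theorem~\ref{multiloop contractil}, so its machinery is not wasted; your approach proves Theorem~\ref{multiloop} cleanly but would not by itself yield the decomposition needed for the contractible case.
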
 

Theorem \ref{multiloop} is important for understanding the limitation of the detection. According to Theorem \ref{multiloop}, if an error $R = R_XR_Z$ occurs and its syndrome is trivial, meaning that $R$ is undetectable, then $\G(R_Z)$ and $\G^*(R_X)$ are bridgeless graph. This presents a problem for the code since the undetectability of such errors implies that they cannot be corrected. However, as will see with Theorem \ref{multiloop contractil}, this issue arises only if the bridgeless graph is non-contractible. For contractible bridgeless graphs, the errors will not affect $V_{gs}$; that is, these errors will not damage the code.

\begin{theorem}\label{multiloop contractil} Let $R = R_XR_Z$ be an error operator.
\begin{itemize}
    \item[$\imath$)] If $Syn_{R}(v) = \mathbf{1}$ for all $v \in V$, and $\G(R_Z)$ is a contractible graph. Then for all $\ket{\psi} \in V_{gs}$, $R_Z\ket{\psi} = \ket{\psi}$.
    \item[$\imath \imath$)] If $Syn_{R}(p) = 0$ for all $p \in P$, and $\G^*(R_X)$ is a contractible graph in the dual lattice. Then for all $\ket{\psi} \in V_{gs}$, $R_X\ket{\psi} = \ket{\psi}$.
\end{itemize}   
\end{theorem}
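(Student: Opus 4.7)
The plan is to show that under the hypothesis of $(i)$, the operator $R_Z$ can be written as a product $\prod_p \widehat{B}_{\gamma_p}(p)$ of face-type stabilizers; since each $\widehat{B}_\gamma(p)$ fixes every ground state by Proposition~\ref{bprimas}, the claim $R_Z\ket{\psi}=\ket{\psi}$ follows immediately. To construct the face labels $\gamma_p$, I would interpret the edge data $\{\gamma_e\}$ of $R_Z$ as a $1$-chain $\gamma = \sum_e \gamma_e\cdot e$ in the cellular chain complex $C_\bullet(\cL;\widehat{G})$ (writing $\widehat{G}$ additively). By Proposition~\ref{long}, the syndrome condition $Syn_R(v)=\mathbf{1}$ at every vertex is exactly $\sum_{e\in\operatorname{star}(v)}\varepsilon(e,v)\gamma_e = 0$, i.e., $\partial_1\gamma = 0$; meanwhile, using \eqref{faceoperatorgamma} together with $Z_{\gamma_1}Z_{\gamma_2}=Z_{\gamma_1\gamma_2}$, the identity $R_Z = \prod_p \widehat{B}_{\gamma_p}(p)$ becomes the chain-level equation $\gamma = \partial_2\bigl(\sum_p \gamma_p \cdot p\bigr)$. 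So the task reduces to showing that the $1$-cycle $\gamma$ is a $1$-boundary.

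To produce the required $2$-chain, I would exploit that $\gamma$ is supported on $\G(R_Z)$, which is bridgeless by Theorem~\ref{multiloop}. Since $\G(R_Z)$ is a $1$-dimensional complex, $Z_1(\G(R_Z);\widehat{G}) = H_1(\G(R_Z))\otimes_{\Z}\widehat{G}$, and $H_1(\G(R_Z))$ is freely generated by simple cycles $\cC_1,\ldots,\cC_k$ coming from the ear-cycle cover described after Proposition~\ref{ear} (or from a spanning tree of $\G(R_Z)$). Expanding $\gamma = \sum_i a_i\cdot \cC_i$ with $a_i\in\widehat{G}$ and using the contractibility hypothesis, each $\cC_i\subseteq\G(R_Z)$ bounds a disk $D_i$ in $\Sigma$; orienting $D_i$ consistently yields a $2$-chain $\widetilde{D}_i$ with $\partial_2\widetilde{D}_i = \cC_i$. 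Hence $\gamma = \partial_2\bigl(\sum_i a_i\widetilde{D}_i\bigr)$, which provides the $\gamma_p$'s needed to write $R_Z$ as a product of $\widehat{B}_{\gamma_p}(p)$'s, establishing part~$(i)$.

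Part~$(ii)$ follows by running the dual argument on $\cL^*$: the condition $Syn_R(p) = 0$ for every face $p$ becomes a $1$-cycle condition for the chain $\sum_e g_e\cdot e^* \in C_1(\cL^*;G)$, and the contractibility of $\G^*(R_X)$ in $\cL^*$ (together with bridgelessness from Theorem~\ref{multiloop}) lets the same reasoning write $R_X$ as a product of vertex stabilizers $A_g(v)$, each of which fixes $V_{gs}$ by Proposition~\ref{bprimas}. The main obstacle in executing this plan is sign bookkeeping: one must check that the signs $\varepsilon(e,v)$ and $\varepsilon(e,p)$ appearing in the syndrome formulas agree with the standard cellular boundary maps for $\cL$, and that the disks $\widetilde{D}_i$ can be oriented so that $\partial_2\widetilde{D}_i$ reproduces $\cC_i$ with the exact signs appearing in the decomposition of $\gamma$.
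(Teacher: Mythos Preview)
Your proposal is correct and follows the same underlying strategy as the paper: use Theorem~\ref{multiloop} to get bridgelessness, decompose the error along an ear-cycle basis of $\G(R_Z)$, and then use contractibility to fill each basis cycle with a disk of faces, so that $R_Z$ becomes a product of $\widehat{B}_{\gamma_p}(p)$'s fixing $V_{gs}$ by Proposition~\ref{bprimas}.

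The only real difference is packaging. The paper carries out the decomposition of $R_Z$ into $\prod_i \widehat{B}_{\gamma_i}(\cC_i)$ by an explicit inductive argument (Lemma~\ref{gammaloops}, peeling off one ear at a time), and then proves the theorem by a second induction on the number of ear-cycles. Your homological rephrasing collapses both inductions: once you observe that the syndrome condition is $\partial_1\gamma=0$ and that $Z_1(\G(R_Z);\widehat{G})=H_1(\G(R_Z))\otimes\widehat{G}$ is freely spanned by the ear-cycles, the decomposition $\gamma=\sum_i a_i\cC_i$ and the filling $\cC_i=\partial_2\widetilde{D}_i$ finish the proof in one stroke. This is cleaner and makes transparent that the result is really the statement that a contractible $1$-cycle on $\Sigma$ is a $1$-boundary. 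The sign bookkeeping you flag is routine: $\varepsilon(e,v)$ and $\varepsilon(e,p)$ are precisely (up to a global sign) the incidence numbers in the cellular boundary maps $\partial_1$ and $\partial_2$, so no surprises occur.
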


To prove Theorems \ref{multiloop} and \ref{multiloop contractil}, we require some results and observations. Therefore, the remainder of this section will be dedicated to their proofs.

\subsection{Proof of Theorems \ref{multiloop} and \ref{multiloop contractil}}

We have already defined syndromes for vertices and faces, providing information about $Z_\gamma$ and $X_g$ errors, respectively. Therefore, most of the forthcoming results will have to versions: one addressing $Z$-type errors involving the syndrome on vertices and graphs in the lattice, and other concerning $X$-type errors related to the syndrome on faces and graphs in the dual lattice. Consequently, we will proved the results only for $Z$-type errors. The proofs for $X$-type errors are analogous and will be omitted.

\begin{proposition}
For every error operator $R$, the syndrome of $R$ satisfies that
\begin{align}
    \prod_{v \in V}Syn_{R}(v)= \mathbf{1}, && \sum_{p \in P}Syn_{R}(p)= 0.
\end{align}
\end{proposition}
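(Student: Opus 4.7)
The plan is to expand the syndromes explicitly using the formulas obtained in Proposition \ref{long} and then swap the order of the product (resp.\ sum) over vertices (resp.\ faces) with the product (resp.\ sum) over edges, so that each edge is accounted for by a contribution coming from its two endpoints (resp.\ the two faces bounding it). The result will then follow from a simple cancellation based on the sign functions of Definition \ref{signo}.

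More precisely, write $R=\bigotimes_{e\in E} X_{g_e}Z_{\gamma_e}$. By Proposition \ref{long} and Definition \ref{defsymdrome},
\begin{align*}
\mathrm{Syn}_R(v)=\prod_{e\in \mathrm{star}(v)}\overline{\gamma_e}^{\,\varepsilon(e,v)},
\qquad
\mathrm{Syn}_R(p)=\sum_{e\in \partial p}\varepsilon(e,p)\,g_e.
\end{align*}
The key combinatorial observation I would isolate is the following: each edge $e\in E$ has exactly two endpoints $v_1,v_2$, and its orientation forces $\varepsilon(e,v_1)=-\varepsilon(e,v_2)$, since $e$ points away from one endpoint and toward the other. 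Dually, each edge $e\in E$ lies on the boundary of exactly two faces $p_1,p_2$, and likewise $\varepsilon(e,p_1)=-\varepsilon(e,p_2)$, since traveling counterclockwise around one face traverses $e$ in the direction opposite to the counterclockwise traversal of the other.

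With this in hand, the vertex identity becomes
\begin{align*}
\prod_{v\in V}\mathrm{Syn}_R(v)
=\prod_{v\in V}\prod_{e\in\mathrm{star}(v)}\overline{\gamma_e}^{\,\varepsilon(e,v)}
=\prod_{e\in E}\overline{\gamma_e}^{\,\varepsilon(e,v_1(e))+\varepsilon(e,v_2(e))}
=\prod_{e\in E}\overline{\gamma_e}^{\,0}=\mathbf{1},
\end{align*}
where $v_1(e),v_2(e)$ are the two endpoints of $e$. Analogously,
\begin{align*}
\sum_{p\in P}\mathrm{Syn}_R(p)
=\sum_{p\in P}\sum_{e\in\partial p}\varepsilon(e,p)\,g_e
=\sum_{e\in E}\bigl(\varepsilon(e,p_1(e))+\varepsilon(e,p_2(e))\bigr)g_e
=0,
\end{align*}
where $p_1(e),p_2(e)$ are the two faces containing $e$ in their boundary. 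There is no real obstacle here: the only point that deserves care is justifying the swap of the double product/sum into a product/sum indexed by edges, which is immediate once one notes that the incidence relation ``$e\in\mathrm{star}(v)$'' (resp.\ ``$e\in\partial p$'') pairs each edge with exactly two vertices (resp.\ two faces) of the closed lattice.
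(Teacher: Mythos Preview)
Your proof is correct and follows essentially the same approach as the paper's: both arguments observe that each edge contributes to exactly two vertex (resp.\ face) syndromes with opposite signs $\varepsilon(e,v_1)=-\varepsilon(e,v_2)$ (resp.\ $\varepsilon(e,p_1)=-\varepsilon(e,p_2)$), forcing cancellation edge by edge. The paper phrases this more verbally while you write out the double product/sum swap explicitly, but the underlying idea is identical.
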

\begin{proof}
We will provide only the proof for the first equation, as the proof for the second follows a similar logic. Suppose that on an edge $e$, we have the error $Z_{\gamma}$. This edge is incident to only two vertices, denoted $v_1$ and $v_2$. Consequently, the error $Z_{\gamma}$ contributes $\gamma^{\varepsilon(e,v_1)}$ to the calculation of $Syn_{R}(v_1)$, where $\varepsilon(e,v_1)$ is the function defined in \ref{signo}; and it contributes $\gamma^{\varepsilon(e,v_2)}= \gamma^{-\varepsilon(e,v_1)}$ to the calculation of $Syn_{R}(v_2)$. Thus, the error $Z_{\gamma}$ effectively contributes $\mathbf{1}$ to $\prod_{v \in V}Syn_{R}(v)$. This principle applies to every $Z$-type error on every edge, leading to the conclusion that $\prod_{v \in V}Syn_{R}(v) = \mathbf{1}$.
\end{proof}

We are going to extend the definition of the operators $A_g$ and $\widehat{B}_\gamma$ from equations \ref{vertexoperatorg} and \ref{faceoperatorgamma}, respectively. We will start with the operators $\widehat{ B}_\gamma$. Initially, the action of these operators was defined on the faces of the lattice. Now, we will define their action on any cycle $\cC$ in $\cL$. Note that for a face $p$, the action of $\widehat{B}_\gamma(p)$ on the edges of $\partial p$ depends on the counterclockwise orientation  with which $p$ is endowed. Consequently, as every cycle has two possible orientations, $\widehat{B}_\gamma(\cC)$ will depend on our chosen orientation for $\cC$. Therefore, we will first generalize the function defined in \ref{signo}
\begin{definition}
Let $\cC$ be a cycle endowed with an orientation $\sigma$ (one of the two possible orientations). Then, for the set $\{(e,\cC,\sigma)| e \in \cC\}$, we define the function
\begin{align}
    \varepsilon(e,\cC, \sigma) :=\begin{cases} ~ ~ 1 & \text{If } e \text{ is oriented in the same direction as } \sigma \text{ along } \cC.\\-1 & \text{If } e \text{ is oriented in the opposite direction as } \sigma \text{ along } \cC. \end{cases}
\end{align}
\end{definition}

Then, for each cycle $\cC$ in $\cL$ and orientation $\sigma$ of $\cC$, we define the operator
\begin{align}
    \widehat{B}_\gamma(\cC,\sigma) = \left( \bigotimes_{e \in \cC}Z^{\varepsilon(e,\cC, \sigma)}_{\gamma}\right) \otimes \left( \bigotimes_{e \in E \setminus \cC}I_e \right).
\end{align}
To simplify the notation, when the orientation of a cycle $\cC$ is clear in the context, we will write $\widehat{B}_\gamma(\cC)$ instead of $\widehat{B}_\gamma(\cC,\sigma)$, and for an edge $e \in \cC$, we will write $Z^{\varepsilon(e,\cC)}_{\gamma}$ instead of $Z^{\varepsilon(e,\cC, \sigma)}_{\gamma}$.


To extend the $A_g$ operators, we use a similar method to that used for  $\widehat{B}_\gamma$, but this time in the dual lattice. Originally, the action of $A_g$ was defined on vertices. Considering that the vertices of $\cL$ correspond to the faces of $\cL^*$, $A_g$ can effectively be interpreted as a face operator in $\cL^*$. Then, for each graph $\cC \subset \cL$ such that $\cC^* \subset \cL^*$ is a cycle and for a given orientation $\sigma$ of $\cC^*$, we define the operator 
\begin{align}
    A_g(\cC,\sigma) = \left( \bigotimes_{e \in \cC}X^{\varepsilon(e^*,\cC^*, \sigma)}_{g}\right) \otimes \left( \bigotimes_{e \in E \setminus \cC}I_e \right).
\end{align}
To simplify the notation, when the orientation of a cycle $\cC^*$ is clear in the context, we will write $A_g(\cC)$ instead of $A_g(\cC,\sigma)$, and for an edge $e \in \cC$, we will write $X^{\varepsilon(e,\cC)}_{g}$ instead of $X^{\varepsilon(e^*,\cC^*, \sigma)}_{g}$.

\begin{remark}
   It is worth mentioning that operators $\widehat{B}_\gamma(\cC,\sigma)$ and  $A_g(\cC,\sigma)$ can also be defined similarly using a path instead of a cycle. In this context, these operators are known as \emph{string operators}.
\end{remark}

\begin{lemma} \label{loopsindrome} \hfill
\begin{itemize}
    \item[$\imath$)] Let $\cC \subset \cL$ be a cycle with orientation $\sigma$, and let $\gamma \in \widehat{G}$. Consider the operator $ \widehat{B}_\gamma(\cC,\sigma)$. Then  $Syn_{\widehat{B}_\gamma(\cC,\sigma)}(v) = \textbf{1}$, for every vertex $v \in V$.
    \item[$\imath \imath$)] Let $\cC \subset \cL$ be a graph such that $\cC^*$ is a cycle with orientation $\sigma$, and let $g \in G$. Consider the operator $A_g(\cC,\sigma)$. Then $Syn_{A_g(\cC,\sigma)}(p) = 0$, for every face $p \in P$.
\end{itemize}
\end{lemma}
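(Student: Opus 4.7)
The plan is to apply the syndrome formulas from Proposition \ref{long} directly to the operators $\widehat{B}_\gamma(\cC,\sigma)$ and $A_g(\cC,\sigma)$, and verify cancellation at each vertex (resp.\ face) by a short local case analysis.

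For part $\imath$, the operator $\widehat{B}_\gamma(\cC,\sigma)$ corresponds to $\gamma_e = \gamma^{\varepsilon(e,\cC,\sigma)}$ for $e \in \cC$ and $\gamma_e = \mathbf{1}$ for $e \notin \cC$. Fix a vertex $v \in V$. If no edge of $\cC$ is incident to $v$, every factor of the product $\prod_{e \in \operatorname{star}(v)} \overline{\gamma_e}^{\varepsilon(e,v)}$ is trivial, so $Syn_{\widehat{B}_\gamma(\cC,\sigma)}(v) = \mathbf{1}$. Otherwise, since $\cC$ is a simple cycle, exactly two of its edges, $e_1$ and $e_2$, are incident to $v$; with respect to the orientation $\sigma$, one of them (say $e_1$) arrives at $v$ and the other (say $e_2$) leaves $v$.

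The heart of the argument is the local check that
\[
\varepsilon(e_1,\cC,\sigma)\cdot\varepsilon(e_1,v) = -1, \qquad \varepsilon(e_2,\cC,\sigma)\cdot\varepsilon(e_2,v) = +1.
\]
Indeed, if the native orientation of $e_1$ points into $v$, then $e_1$ traversed natively ends at $v$, which agrees with $\sigma$, so $\varepsilon(e_1,\cC,\sigma)=+1$ while $\varepsilon(e_1,v)=-1$; the reversed native orientation flips both signs and gives the same product $-1$. The analogous two subcases for $e_2$ yield $+1$. Substituting into Proposition \ref{long},
\[
Syn_{\widehat{B}_\gamma(\cC,\sigma)}(v) \;=\; \gamma^{-\varepsilon(e_1,\cC,\sigma)\varepsilon(e_1,v)}\cdot\gamma^{-\varepsilon(e_2,\cC,\sigma)\varepsilon(e_2,v)} \;=\; \gamma\cdot\gamma^{-1} \;=\; \mathbf{1}.
\]

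Part $\imath\imath$ is entirely parallel in the dual lattice. The face syndrome $Syn_{A_g(\cC,\sigma)}(p) = \sum_{e\in\partial p}\varepsilon(e,p)\,g_e$ receives contributions only from edges $e\in\cC\cap\partial p$, i.e.\ from dual edges $e^*\in\cC^*$ incident to $p^*$. Either $p^*\notin\cC^*$ (empty sum, hence $0$), or exactly two such edges meet at $p^*$; the induced orientation on $\cL^*$ makes the same incoming/outgoing sign dichotomy apply, forcing the two contributions $+g$ and $-g$ to cancel.

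The main obstacle is the sign bookkeeping: one must enumerate the four combinations of native edge orientation versus cycle orientation at the incident vertex to confirm the key identity $\varepsilon(e,\cC,\sigma)\varepsilon(e,v)=\pm 1$ according as $e$ is outgoing/incoming along $\sigma$. For the dual statement one additionally needs that the rotation convention defining the dual orientation (Figure \ref{fig:orientation dual lattice}) translates $\varepsilon(e,p)$ faithfully into the vertex-sign $\varepsilon(e^*,p^*)$ in $\cL^*$; this is a single local picture check. Once these sign conventions are pinned down, the cancellation is immediate and the lemma follows.
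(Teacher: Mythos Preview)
Your proposal is correct and follows essentially the same approach as the paper: both reduce to the trivial case for vertices off $\cC$, then for $v\in\cC$ perform a local sign analysis on the two incident edges to show the contributions cancel. The only cosmetic difference is that the paper organizes the case split as ``$e_1,e_2$ oriented the same way along $\cC$'' versus ``opposite ways,'' whereas you organize it as ``incoming versus outgoing along $\sigma$'' and compute the product $\varepsilon(e,\cC,\sigma)\varepsilon(e,v)$ directly; these are equivalent bookkeeping choices, and the paper likewise omits part $\imath\imath$ as dual.
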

\begin{proof}
     $\imath$) For vertices not in $\cC$, the lemma is trivial. Now, consider a vertex $v \in \cC$. Since $\cC$ is a cycle then $v$ is incident to two edges in $\cC$, let us call them $e_1$ and $e_2$. By definition the error on $e_i$ with $i=1,2$ is $$Z^{\varepsilon(e_i,\cC)}_{\gamma} = Z_{\gamma^{\varepsilon(e_i,\cC)} }.$$
    If $e_1$ and $e_2$ are oriented in the same direction along $\cC$ then $\varepsilon(e_1,\cC)= \varepsilon(e_2,\cC)$. Besides, if they are oriented in the same direction, this implies that one edge points to $v$ and the other points away from $v$. Therefore, $\varepsilon(e_1,v)=-\varepsilon(e_2,v)$, and thus
    \begin{align*}
        Syn_R(v) & = (\gamma^{\varepsilon(e_1,\cC)})^{\varepsilon(e_1,v)}(\gamma^{\varepsilon(e_2,\cC)})^{\varepsilon(e_2,v)}\\
        & = (\gamma^{\varepsilon(e_1,\cC)})^{\varepsilon(e_1,v)}(\gamma^{\varepsilon(e_1,\cC)})^{-\varepsilon(e_1,v)}\\
        & = \gamma \gamma^{-1}\\
        & = \textbf{1}
    \end{align*}
    If $e_1$ and $e_2$ are oriented in different directions along $\cC$ the argument is similar.
    \end{proof}

\begin{lemma} \label{gammaloops} Let $R= R_XR_Z$ be an error
\begin{itemize}
    \item[$\imath$)]  Let $L$ be a bridgeless subgraph of $\G(R_Z)$ with at most one vertex $v_1 \in L$ incident to an edge of $\G(R_Z)$ not in $L$. If for all $v \in L$ with $v \neq v_1$, it holds that $Syn_R(v) = \mathbf{1}$, then for an ear-cycle cover $\{ \cC_i\}_{i=1}^{n}$ of $L$ with orientations $\{ \sigma_i\}_{i=1}^{n}$, we have
    \begin{align*}
        R_Z|_L = \prod_{i=1}^n \widehat{B}_{\gamma_i}(\cC_i, \sigma_i),
\end{align*}
for some $\gamma_i \in \widehat{G}$.
\item[$\imath \imath$)] Let $L$ a subgraph of $\G(R_X)$. Suppose $L^*$ is a bridgeless subgraph of $\G^*(R_X)$ with at most one dual vertex $p^*_1 \in L^*$ incident to an edge of $\G^*(R_X)$ not in $L^*$. If for all $p^* \in L^*$ with $p^* \neq p^*_1$, it holds that $Syn_R(p) = 0$, then for an ear-cycle cover $\{ \cC^*_i\}_{i=1}^{n}$ of $L^*$ with orientations $\{ \sigma_i\}_{i=1}^{n}$, we have
    \begin{align*}
        R_X|_L = \prod_{i=1}^n A_{g_i}(\cC_i,\sigma_i),
\end{align*}
for some $g_i \in G$.
\end{itemize}    
\end{lemma}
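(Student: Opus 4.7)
The plan is to prove part $(i)$ by induction on the length $n$ of the ear decomposition $\G_1 \subset \cdots \subset \G_n = L$ underlying the given ear-cycle cover; part $(ii)$ follows by the same argument on the dual lattice, with $A_g$-string operators and face syndromes replacing $\widehat{B}_\gamma$ and vertex syndromes. As a preliminary reduction I would upgrade the hypothesis to the stronger statement that $Syn_{R_Z|_L}(v) = \mathbf{1}$ for \emph{every} vertex $v$ of $\cL$. At each $v \in L$ with $v \neq v_1$, all edges of $\G(R_Z)$ incident to $v$ lie in $L$, so $R_Z$ and $R_Z|_L$ have the same local contributions at $v$ and the hypothesis $Syn_R(v) = \mathbf{1}$ transfers directly; at vertices outside $L$ there is nothing to check; and at $v_1$ the global identity $\prod_v Syn_{R_Z|_L}(v) = \mathbf{1}$ forces triviality.

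The base case $n = 1$ is a walk around the cycle $L = \cC_1$. Fix any edge $e^{\ast} \in \cC_1$ and set $\gamma_1 := \gamma_{e^{\ast}}^{\varepsilon(e^{\ast},\cC_1)}$. At each vertex $v \in \cC_1$ only the two cycle-edges $e_1,e_2$ at $v$ contribute, so $Syn_{R_Z|_L}(v) = \mathbf{1}$ reduces to $\gamma_{e_1}^{\varepsilon(e_1,v)} \gamma_{e_2}^{\varepsilon(e_2,v)} = \mathbf{1}$. Combined with the sign identity $\varepsilon(e_1,v)\varepsilon(e_1,\cC_1) = -\varepsilon(e_2,v)\varepsilon(e_2,\cC_1)$ valid at consecutive edges of a cycle, this yields $\gamma_{e_1}^{\varepsilon(e_1,\cC_1)} = \gamma_{e_2}^{\varepsilon(e_2,\cC_1)}$, and iterating around $\cC_1$ gives $\gamma_e^{\varepsilon(e,\cC_1)} = \gamma_1$ for every $e \in \cC_1$, i.e.\ $R_Z|_L = \widehat{B}_{\gamma_1}(\cC_1)$.

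For the inductive step I would peel off the \emph{last} ear $\rho_n$ with associated cycle $\cC_n$. Pick $e^{\ast} \in \rho_n$ and set $\gamma_n := \gamma_{e^{\ast}}^{\varepsilon(e^{\ast},\cC_n)}$. Every vertex of $\rho_n$ outside $\G_{n-1}$ is a newly introduced vertex of the ear decomposition and, since $\rho_n$ is the last ear, it cannot appear in any subsequent ear; its only $L$-incidences are therefore the two $\rho_n$-edges, so the same walk-around propagation forces $\gamma_e^{\varepsilon(e,\cC_n)} = \gamma_n$ for every $e \in \rho_n$. Consequently the operator $S_1 := R_Z|_L \cdot \widehat{B}_{\gamma_n}(\cC_n)^{-1}$ is trivial on every edge of $\rho_n$ and supported on $\G_{n-1}$; by Lemma \ref{loopsindrome} the factor $\widehat{B}_{\gamma_n}(\cC_n)$ has trivial vertex syndrome everywhere, so $S_1$ inherits trivial vertex syndromes at every vertex. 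Applying the induction hypothesis to $S_1$ together with the shorter ear decomposition $\G_1 \subset \cdots \subset \G_{n-1}$ and its cover $\{\cC_1,\ldots,\cC_{n-1}\}$ produces $\gamma_1,\ldots,\gamma_{n-1}$ with $S_1 = \prod_{j=1}^{n-1}\widehat{B}_{\gamma_j}(\cC_j)$, and multiplying by $\widehat{B}_{\gamma_n}(\cC_n)$ yields the desired factorization.

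The main obstacle I anticipate is the propagation step along $\rho_n$: the trivial-syndrome relation only collapses to the clean two-edge identity I use when the interior vertex of $\rho_n$ has exactly two $L$-incidences. Peeling the \emph{last} ear first is essential here, because a newly introduced interior vertex of an earlier ear can acquire extra edges from later ears and break the local two-edge picture. Handling the ear-decomposition order carefully — outside-in, rather than in the order ears were added — is what makes the induction go through cleanly, and it is also why the preliminary upgrade to syndrome triviality at \emph{every} vertex (in particular at $v_1$, which may sit in the interior of some $\rho_n$) is needed.
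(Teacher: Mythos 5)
Your proof is correct and takes essentially the same route as the paper's: induction on the ear-cycle cover, determining the error on the newest ear $\rho_n$ by propagating trivial vertex syndromes along it, dividing out $\widehat{B}_{\gamma_n}(\cC_n)$, and invoking Lemma \ref{loopsindrome} to see that the syndrome hypothesis survives before applying the induction hypothesis to $\G_1\subset\cdots\subset\G_{n-1}$. Your two refinements --- deducing $Syn_{R_Z|_L}(v_1)=\mathbf{1}$ from the global product relation, and the explicit remark that interior vertices of the \emph{last} ear have degree two in $L$ so the two-edge propagation identity applies --- make explicit points the paper leaves implicit, but do not change the argument.
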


\begin{figure}[h!]
    \centering
  \includegraphics[scale=0.9]{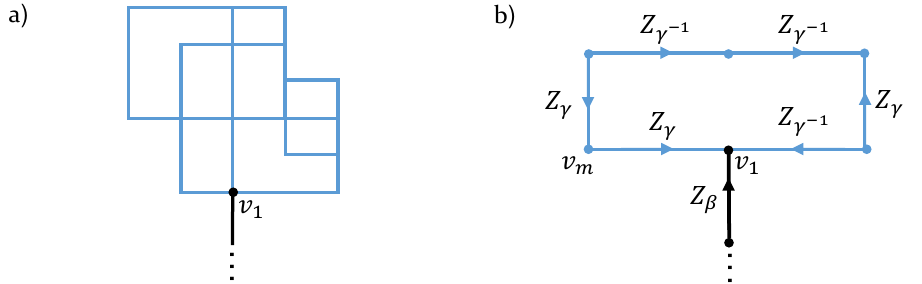}
   \caption{In a), we see in blue an example of how the bridgeless subgraph $L$, as described in Lemma \ref{gammaloops}, might look like. In b), we show the required behavior of errors on a cycle to ensure $Syn_{R}(v)= \mathbf{1}$.} 
   \label{formadeerror}
\end{figure}

Before giving the proof, it is important to clarify the objective of Lemma \ref{gammaloops}. This lemma aims to demonstrate that for a bridgeless subgraph of $Z$-type errors, as illustrated in blue in Figure \ref{formadeerror}, a), it is not necessary to know the syndrome at the vertex $v_1$. It suffices that the syndromes at all other vertices are the trivial character  in order to describe the error operators on each edge. Additionally, Lemma \ref{gammaloops} asks that at most one vertex can be incident to an edge that is not in $L$, (the edge shown in black in Figure \ref{formadeerror}). This implies that the proposition applies to both cases, the one depicted in the Figure \ref{formadeerror}, a), and when $L$ is a connected component of $\G(R_Z)$. 

\begin{proof}[ Proof Lemma \ref{gammaloops}]
  $\imath$) By definition, $L = \bigcup_{i=1}^{n} \cC_i$. We will prove this by induction on $n$. For $n=1$, $L$ is sequence of distinct edges \((e_1, e_2, \ldots, e_{m})\) such that \(\partial(e_i) = \{v_{i}, v_ {i+1}\}\), with $m+1 := 1$. The error on $e_1$ is some $Z_\gamma^{\varepsilon(e_1,L)}$, and since $Syn_{R}(v_2) = \mathbf{1}$, then the error on the edge $ e_2$ must be $Z_\gamma^{\varepsilon(e_2,L)}$. Similarly, as $Syn_{R}(v_3) = \mathbf{1}$, the error on $e_3$ is $Z_\gamma^{\varepsilon(e_3,L)}$. Continuing with this argument and considering that $Syn_{R}(v_i) = \mathbf{1}$ for all $2 \le i \le m$, it follows that the error on edge $e_i$ is $Z_\gamma^{ \varepsilon(e_i,L)}$, as seen in part b) of Figure \ref{formadeerror}. Therefore, $R_Z|_L = \widehat{B}_{\gamma}(L)$.   
  
 Now, let us assume that the proposition holds for an ear-cycle cover of $n-1$ cycles and proceed to prove that it holds for an ear-cycle cover with $n$ cycles. Take $\cC_n$ from the cover $\{ \cC_i\}_{i=1}^{n}$. By definition, there is a path or cycle $\rho_n \subset \cC_n$ that connects to $\bigcup_{i=1} ^{n-1}\cC_i$ to form $ L$. Select an edge $e \in \rho_n$; clearly, $e \not\in \cC_i$ for any $i \neq n$. The error on $e$ is some $Z_\gamma^{\varepsilon(e,\cC_n)}$. By applying the same argument as in the induction base case $n=1$, we can infer that for every edge $e' \in \rho_n$, the error is $Z_\gamma^{\varepsilon(e,\cC_n)}$. So, by applying the operator $\widehat{B}_{\gamma}(\cC_n)$ and considering the error $R'=\widehat{B}_{\gamma}(\cC_n)^{-1}R_Z$ which is composed only of $Z$-type errors, and acts as the identity on the edges in $\rho_n$. Thus $L \setminus \rho_n$ becomes a bridgeless subgraph of $\G(R')$. Let us see that $R'$ and $L \setminus \rho_n$ satisfy the hypotheses of this proposition. First, if $v_1 \not\in L\setminus \rho_n$ then choose any other vertex from $ L \setminus \rho_n$ as the new $v_1$. Secondly, as a consequence of Lemma \ref{loopsindrome}, the operator $\widehat{B}_{\gamma}(\cC_n)^{-1}$ does not affect the vertex syndrome. Therefore, for all $v \in L$ with $v \neq v_1$ it holds that  $Syn_{R'}(v) = Syn_{R}(v)= \mathbf{1}$. Moreover, $L\setminus \rho_n = \bigcup _{i=1}^{n-1} \cC_i$ with $\{ \cC_i\}_{i=1}^{n-1}$ being an ear-cycle cover of $L\setminus \rho_n$. By induction hypothesis, we have $$ R'|_L=R'|_{L \setminus \rho_n} = \prod_{i=1}^{n-1} \widehat{B}_{\gamma_i}(\cC_i) .$$
    Then $R_Z|_L = \widehat{B}_{\gamma}(L_n) R'|_L = \widehat{B}_{\gamma}(\cC_n)\prod_{i=1}^{n-1} \widehat{B}_{\gamma_i}(\cC_i)= \prod_{i=1}^{n} \widehat{B}_{\gamma_i}(\cC_i)$
\end{proof}

We now have everything we need to prove the main theorems in this section.

\begin{proof}[Proof Theorem \ref{multiloop}]
   $\imath$)  We will prove that every connected component of $\G(R_Z)$ is bridgeless. Suppose that $\G(R_Z)$ is connected but is not bridgeless. This assumption leads to two possible cases. We will show that both cases result in a contradiction, thus concluding that $\G(R_Z)$ must be bridgeless.
   
   \emph{Case 1:} There is a vertex in $\G(R_Z)$ with degree one: Let $v_1$ be the vertex that is incident to only one edge with error $Z_{\gamma}$, where $Z_{\gamma} \neq \operatorname{Id}$, so $\gamma \neq \mathbf{1}$, then $Syn_{R}(v_1) = \gamma$ or $Syn_{R}(v_1)=(\gamma)^{-1}$ depending on the orientation of the edge where the error is, in any case, $Syn_{R}(v_1) \neq \mathbf{1}$. That is a contradiction.
   
   \emph{Case 2:} Each vertex in $\G(R_Z)$ has degree more than one. By Proposition \ref{noloopforma} there exists a maximal bridgeless graph $L \subset \G(R_Z)$ featuring a unique vertex $v_1 \in L$ that is incident to a bridge $e \in \G(R_Z)$. Assuming the error on $e$ is $Z_\beta\neq \operatorname{Id}$, we aim to prove that $Syn_{R}(v_1) \neq \mathbf{1}$, which would be a contradiction. Consider $L = \bigcup_{i=1}^{n} \cC_i$ with $\{ \cC_i\}_{i=1}^{n}$ ear-cycle cover of $L$. By fixing a set of orientations $\{ \sigma_i\}_{i=1}^{n}$ for $\{ \cC_i\}_{i=1}^{n}$, Lemma \ref{gammaloops} leads to the equation  $$R_Z|_L = \prod_{i=1}^{n} \widehat{B}_{\gamma_i}(\cC_i).$$
   However, according to Lemma \ref{loopsindrome} operators of the form $\widehat{B}_{\gamma_i}(\cC_i)$ have no effect on the vertex syndrome. Then $Syn_{R}(v_1) = Z_\beta^{\varepsilon(e,v_1)} \neq \mathbf{1}$, which presents a contradiction.
 \end{proof}

\begin{proof}[Proof Theorem \ref{multiloop contractil}]
    $\imath$) To prove that $R_Z$ acts as the identity on $V_{gs}$, we aim to prove that $R_Z= \prod_{p\in P}\widehat{B}_{\gamma_p}(p)$. According to Proposition \ref{bprimas}, for every vector $\ket{\psi} \in V_{gs}$ and every face $p \in P$, it holds that $\widehat{B}_{\gamma_p}(p)\ket{\psi} = \ket{\psi}$. Because $Syn_{R}(v) = \mathbf{1}$ for all $v \in V$, by Theorem \ref{multiloop} $\G(R_Z)$ is bridgeless, so we can select an ear-cycle cover $\G(R_Z) = \bigcup_{i=1}^{n} \cC_i$, by hypothesis each $\cC_i$ is contractible, allowing us to assign a counterclockwise orientation to each. The proof then proceeds by induction on the number of cycles in the ear-cycle cover. Let us assume that $\G(R_Z) = \cC_1$ is a single cycle. Then, by Lemma \ref{gammaloops}, $R_Z=  \widehat{B}_{\gamma}(\cC_1)$. Since the cycle $\G(R_Z)= \cC_1$ is contractible, its interior is composed of a finite union of faces, lets $\{ p_1, p_2, \dots, p_r\}$ said faces. Hence, we have $R_Z = \prod_{i=1}^r \widehat{B}_\gamma(p_i)$. Now, suppose this statement holds for an ear-cycle cover consisting of  $n-1$ cycles. If $\G(R_Z) = \bigcup_{i=1}^{n} \cC_i$, then according to Lemma \ref{gammaloops}, $$R_Z = \prod_{i=1}^{n} \widehat{B}_{\gamma_i}(\cC_i).$$
    Take the faces $\{ p_1,p_2, \dots, p_r\}$ that lie in $L_n$ and consider the operator $\prod_{i=1}^r\widehat{B}_\gamma(p_i)$. Then $\widehat{B}_{\gamma_n}(\cC_n) = \prod_{i=1}^r\widehat{B}_\gamma(p_i)$. Thus, $R_Z = R'\prod_{i=1}^r\widehat{B}_{\gamma}(p_i)$, where $R'$ is a $Z$-type error such that $Syn_{R'}(v) =  \mathbf{1}$ for all $v \in V$, and $\G(R')$ is contractible bridgeless graph consisting of the cycles  $\{\cC_1, \dots, \cC_{n-1} \}$.  Following the induction hypothesis, we get $R' = \prod_{p\in P}\widehat{B}_{\gamma_p}(p)$ and so $R_Z =  \prod_{p\in P}\widehat{B}_{\gamma_p}(p) \prod_{i=1}^r\widehat{B}_{\gamma_n}(p_i)$, which concludes the proof.
 \end{proof}

\section{Theoretical set of correctable errors}\label{numeroteorico}

By Proposition \ref{base}, we have that a basis for all errors over $V_{gs}$ is
$$\cR = \left\{ \bigotimes_{e \in E}X_{ g_e}Z_{\gamma_e} :  g_e \in G,\gamma_e \in \widehat{G} \right\},$$ we would like to know which subsets of $\cR$ are correctable. For this, we have the Knill-Laflamme Theorem \cite{knill-laflamme}, which gives us the necessary and sufficient conditions for a code to be error correctable for a specific set of errors.


 \begin{theorem}[Knill-Laflamme] \hfill \label{teo: knill-laflame}\\
Let $W$ be a quantum code,
and let $\cP$ be the projector onto $W$. Suppose $R$ is a quantum operator with
error operators $\{R_i\}$. A necessary and sufficient condition for the existence
of an error-correction operator  correcting $R$ on $W$ is that
\begin{align} \label{condicion1}
   \cP R_i^{\dagger}R_j\cP= \alpha_{ij}\cP 
\end{align}
for some Hermitian matrix $\alpha$ of complex numbers.     
\qed
 \end{theorem}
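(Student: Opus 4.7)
The plan is to prove both implications of this standard equivalence by translating the abstract ``correction'' hypothesis into concrete operator identities. An error-correction operator for $W$ with respect to the error set $\{R_i\}$ will be interpreted as a trace-preserving quantum operation with Kraus decomposition $\{F_k\}$ satisfying $\sum_k F_k^\dagger F_k = \operatorname{Id}$, such that for every $\ket{\psi} \in W$ one has $\sum_k F_k R_i \ket{\psi}\bra{\psi} R_i^\dagger F_k^\dagger \propto \ket{\psi}\bra{\psi}$. This is equivalent to requiring that each composition $F_k R_i \cP$ act as a scalar multiple of a single unitary $U_k$ on $W$, i.e.\ $F_k R_i \cP = \lambda_{ki}\, U_k \cP$ for some complex numbers $\lambda_{ki}$.

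For the necessity direction, I would insert the resolution $\operatorname{Id} = \sum_k F_k^\dagger F_k$ into $\cP R_i^\dagger R_j \cP$ to compute
\[\cP R_i^\dagger R_j \cP = \sum_k \cP R_i^\dagger F_k^\dagger F_k R_j \cP = \sum_k \overline{\lambda_{ki}}\, \lambda_{kj}\, \cP U_k^\dagger U_k \cP = \left(\sum_k \overline{\lambda_{ki}}\, \lambda_{kj}\right) \cP,\]
and the resulting matrix $\alpha_{ij} = \sum_k \overline{\lambda_{ki}} \lambda_{kj}$ is manifestly Hermitian, since it is the Gram matrix of the columns $(\lambda_{ki})_k$.

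For the sufficiency direction, assume the stated condition holds. Since $\alpha$ is Hermitian it admits a unitary diagonalization; setting $E_a = \sum_i U_{ai} R_i$ produces new error operators satisfying $\cP E_a^\dagger E_b \cP = d_a \delta_{ab}\, \cP$ with $d_a \geq 0$. For each index $a$ with $d_a > 0$, the polar decomposition of $E_a\cP$ yields $E_a \cP = \sqrt{d_a}\, V_a \cP$ for a partial isometry $V_a$, and the orthogonality relations force the images $V_a \cP\, \cH_{tot}$ to be pairwise orthogonal subspaces of $\cH_{tot}$. The recovery operation is then built as a projective measurement onto these subspaces followed by the conditional application of $V_a^\dagger$, which returns any state in $V_a \cP\, \cH_{tot}$ to $W$; linearity extends correctness from the orthogonalized family $\{E_a\}$ back to the original $\{R_i\}$ and their linear span. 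The main obstacle is the bookkeeping required to make the recovery map trace-preserving: one must accommodate the indices with $d_a = 0$ (which correspond to errors that annihilate every codeword), extend the partial isometries $V_a^\dagger$ to a full set of Kraus operators on $\cH_{tot}$, and verify that the measurement branches cover all of $\cH_{tot}$. These extensions are standard but deserve explicit treatment to avoid circularity.
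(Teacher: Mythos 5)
The paper does not prove this statement: it is quoted verbatim from Knill--Laflamme \cite{knill-laflamme} and marked \qed, so there is no in-paper argument to compare against. Your proposal is the standard textbook proof (unitary diagonalization of $\alpha$, polar decomposition, orthogonality of the images, measurement followed by $V_a^{\dagger}$), and as a sketch it is essentially correct, including the bookkeeping issues you correctly flag at the end ($d_a=0$ branches, completion to a trace-preserving map).

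One imprecision worth fixing: your claimed equivalence $F_k R_i \cP = \lambda_{ki}\,U_k\cP$ with a $k$-dependent unitary $U_k$ is not the right characterization of a correcting operation. If different Kraus branches rotated the codeword by genuinely different unitaries, the output $\sum_k |\lambda_{ki}|^2\, U_k \ket{\psi}\bra{\psi}U_k^{\dagger}$ would not be proportional to $\ket{\psi}\bra{\psi}$. The correct statement, which is what the hypothesis of correction actually forces, is $F_k R_i\cP = \lambda_{ki}\cP$ (no unitary): since $\sum_k F_kR_i\ket{\psi}\bra{\psi}R_i^{\dagger}F_k^{\dagger}$ is a sum of positive operators equal to a multiple of the rank-one projector $\ket{\psi}\bra{\psi}$, each summand is proportional to it, so $F_kR_i\ket{\psi}\in\C\ket{\psi}$ for every $\ket{\psi}\in W$, and linearity makes the scalar independent of $\ket{\psi}$. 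Your necessity computation survives unchanged because you only use $\cP U_k^{\dagger}U_k\cP=\cP$, but the characterization itself should be corrected, since it is also the statement you need to establish (not merely assert as an equivalence) when you translate ``there exists a correcting operation'' into operator identities.
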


Being $V_{gs}$ our code, the projector onto $V_{gs}$ is shown  in equation \ref{proyector}.


 \begin{theorem}
  \label{teoricamente}
Let $G$ be an abelian group, and $\Sigma$ be a compact oriented surface. Consider an oriented lattice $\mathcal{L}$ on $\Sigma$, such that $2n+1$ is the minimum number of edges required to form a non-contractible cycle in $\Sigma$. Then $V_{gs}$ is a quantum error-correcting code for the set,
\begin{align}
    \mathfrak{R} = \{ R~|~R \text{ is an error that affects at most } n \text{ edges} \}.
\end{align}
\end{theorem}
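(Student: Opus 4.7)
The plan is to verify the Knill--Laflamme condition (Theorem \ref{teo: knill-laflame}) for the projector $\cP$ onto $V_{gs}$. Given $R_i, R_j \in \mathfrak{R}$, the operator $R := R_i^{\dagger} R_j$ is, up to a global scalar arising from the commutation relation $Z_\gamma X_g = \gamma(g) X_g Z_\gamma$, a basis element of the form $\bigotimes_{e \in E} X_{g_e} Z_{\gamma_e}$ that is supported on at most $2n$ edges. I would split it as $R = c \cdot R_X R_Z$ and show that in every case $\cP R \cP$ is a scalar multiple of $\cP$, which is exactly what Theorem \ref{teo: knill-laflame} asks for.

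First I would dispose of the case in which some syndrome of $R$ is non-trivial. If for instance $Syn_R(v) \neq \mathbf{1}$ at a vertex $v$, then picking any face $p$ incident to $v$, Proposition \ref{long} gives $P_{Syn_R(p), Syn_R(v)}(v, p)\, R \cP = R \cP$. Since the projectors $\{P_{(g,\gamma)}(v,p)\}$ are mutually orthogonal and $\cP$ contains the complementary factor $P_{(0, \mathbf{1})}(v, p)$, this immediately yields $\cP R \cP = 0$. The argument for a non-trivial face syndrome is identical.

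The substantive case is when every syndrome of $R$ vanishes. Then Theorem \ref{multiloop} tells us that $\G(R_Z) \subset \cL$ and $\G^*(R_X) \subset \cL^*$ are both bridgeless. Because $R$ is supported on at most $2n$ edges, any cycle inside $\G(R_Z)$ or $\G^*(R_X)$ uses at most $2n$ edges, so the hypothesis that every non-contractible cycle in $\Sigma$ requires at least $2n+1$ edges forces every such cycle to be contractible; hence both graphs are contractible. Theorem \ref{multiloop contractil} then gives $R_Z \ket{\psi} = \ket{\psi}$ and $R_X \ket{\psi} = \ket{\psi}$ for every $\ket{\psi} \in V_{gs}$, so $R \cP = c \cP$ and $\cP R \cP = c \cP$. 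Assembling the two cases produces a Hermitian matrix $(\alpha_{ij})$ that certifies correctability via Theorem \ref{teo: knill-laflame}.

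The main obstacle I anticipate is the symmetric treatment of the primal and dual parts of $R$: the phrase ``minimum number of edges required to form a non-contractible cycle in $\Sigma$'' must be read as a geometric statement covering both primal and dual cycles, since $R_Z$ leaves its trace on $\cL$ while $R_X$ leaves its trace on $\cL^*$. A minor bookkeeping point is the scalar $c$ picked up when normalizing $R_i^{\dagger} R_j$ into the canonical $X_g Z_\gamma$ order; it is absorbed into $\alpha_{ij}$ and plays no further role.
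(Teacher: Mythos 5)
Your proposal is correct and follows essentially the same route as the paper: apply Knill--Laflamme to $R_i^{\dagger}R_j$, kill the case of a non-trivial syndrome via Proposition \ref{long} and orthogonality of the site projectors, and in the trivial-syndrome case combine Theorem \ref{multiloop} (bridgeless), the $2n$-edge bound (contractible), and Theorem \ref{multiloop contractil} to get $\cP R_i^{\dagger}R_j \cP = \cP$. Your extra remarks about the reordering scalar and the primal/dual reading of the cycle hypothesis are sensible refinements of points the paper leaves implicit, but they do not change the argument.
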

 \begin{proof}
  Let $\cP$ be the projector onto $V_{gs}$. Let 
$R_i,R_j \in \mathfrak{R}$, consider $R_i^{\dagger}R_j$. By Propositions \ref{long}, for each site $(v,p)$, we have two possibilities:
\begin{align*}
         P_{0,\mathbf{1}}(v,p)(R_i^{\dagger}R_j)\cP = 0 && \text{or} && P_{0,\mathbf{1}}(v,p)(R_i^{\dagger}R_j ) \cP= (R_i^{\dagger}R_j) \cP
     \end{align*}
If for some site $P_{0,\mathbf{1}}(v,p)(R_i^{\dagger}R_j)\cP = 0$, then it follows that $\cP(R_i^{\dagger}R_j)\cP = 0$. Conversely, if for every site $P_{0,\mathbf{1}}(v,p)(R_i^{\dagger}R_j)\cP = (R_i^{\dagger}R_j) \cP$, then for every vertex $v$, the vertex syndrome $Syn_{R_i^{\dagger}R_j}(v) = \mathbf{1}$ and for every face $p$, the face syndrome $Syn_{R_i^{\dagger}R_j}(p) = 0$. According to Theorem \ref{multiloop}, $\G((R_i^{\dagger}R_j)_Z)$ and $\G^*((R_i^{\dagger}R_j)_X)$ are bridgeless graphs. But since $R_i^{\dagger}$ and $R_j$ each act on at most $n$ edges, $R_i^{\dagger}R_j$ acts on at most $2n$ edges. Hence, $\G((R_i^{\dagger}R_j)_Z)$ and $\G^*((R_i^{\dagger}R_j)_X)$ must be contractible, then by Theorem \ref{multiloop contractil}, we have that $(R_i^{\dagger}R_j) \cP = \cP$, which also means $\cP (R_i^{\dagger}R_j ) \cP = \cP$. In conclusion, by Theorem \ref{teo: knill-laflame}, $V_{gs}$ is a quantum error-correcting code for the set of errors $\mathfrak{R}$.
 \end{proof}





Theorem \ref{teoricamente} says that, if $2n$ edges are insufficient to form a non-contractible cycle in $\Sigma$, then $V_{gs}$ is capable of correcting errors affecting $n$ edges. This establishes a theoretical set of correctable errors. However, the theoretical correctability of this set does not guarantee its practical feasibility. In fact, the construction of correction operators for the set of errors outlined in Theorem \ref{teoricamente}, which ensure the resultant graph is contractile, is, in the general case, an NP-complete problem. These issues will be further discussed in Section \ref{apendice}. Alternatively, Section \ref{seccioncorrecion} will introduce a distinct set of correctable errors that, despite being smaller than the one proposed in Theorem \ref{teoricamente}, can be corrected in polynomial time.

\section{Error correction for abelian groups} \label{seccioncorrecion}

As mentioned in Section \ref{secciondeteccion}, a basic error has the form $R = \bigotimes_{e \in E}X_{ g_e}Z_{\gamma_e}$, and $R$ can always be rewritten, up to a phase, as $R= R_XR_Z$, where $R_X = \bigotimes_{e \in E}X_{ g_e}$ and $R_Z = \bigotimes_{e \in E}Z_{\gamma_e}$. Given that $R_X$ and $R_Z$ can be corrected separately, this section focuses on developing a correction operator for $Z$-type errors. The methodology for correcting $X$-type errors, which involves working with the dual lattice, is analogous but not covered here.

The construction of a correction operator $C$, for a $Z$-type error $R$, depends on two essential requirements. First, for each vertex $v \in V$, we require $Syn_C(v) = Syn_R(v)^{-1}$. This condition ensures $Syn_{CR}(v) = \mathbf{1}$, which, according to Theorem \ref{multiloop}, implies that $\G(CR)$ is a bridgeless graph. Second, it is necessary for this bridgeless graph to be contractible. By Theorem \ref{multiloop contractil}, contractibility of $\G(CR)$  guarantees that $CR$ fixes the ground states,  and then $C$ corrects $R$. The preliminary step in constructing $C$ involves identifying the edges on which the correction will act. These edges are chosen using Algorithm \ref{redes}, which is a variant of the Hard Decision Renormalization Group (HDRG) decoders employed in various topological quantum error correction algorithms, as in \cite{clu5,clu4, clu3}.

Algorithm \ref{redes} incorporates Dijkstra's algorithm to determine the minimum weight paths between vertices in a weighted graph. Starting from a selected vertex, the algorithm find the minimum weight path to every other vertex in the graph. When dealing with a subset of vertices $\{v_1, \dots, v_r \}$, Dijkstra's algorithm is applied individually to each $v_i \in \{v_1, \dots, v_r \}$, computing the minimum weight paths between all vertex pairs in the subset. Thus, the algorithm is executed $r$ times, where $r$ denotes the number of vertices in the subset.

\begin{definition}
  Let $ \cL$ be an oriented lattice.
  \begin{itemize}
  \item[$\bullet$] A \emph{cluster}\footnote[1]{In graph theory, the term \emph{cluster} carries a different definition. However, this term was chosen to describe connected graphs, as it has been commonly used in some papers about error correction in quantum codes.} of $\cL$ is a connected subgraph of $\cL$.
  \item[$\bullet$] Let $N$ be a cluster. A \emph{subcluster} of $N$ is a cluster $N'$ such that $N' \subset N$.
  
  \item[$\bullet$] Let $ \rho$ be  a  path. The length of $\rho$, denoted by $l(\rho)$, is the number of edges in $\rho$.    
  \item[$\bullet$] Let $v_1, v_2$ be two vertices of $\cL$. We define the distance between $v_1$ and $v_2$, $d(v_1,v_2)$, as the minimum length of a  path that connects $v_1 $ and $v_2$. 
  
  \end{itemize}
\end{definition}

\begin{definition}
Let $N$ be a cluster, and \(x: W \to \widehat{G}\) be a function from \(W\), a subset of the vertices of \(N\), to \(\widehat{G}\). $N$ is called \emph{neutral} for the function $x$ if $\prod_{v \in W} x(v) = \mathbf{1}$.
\end{definition}

For a cluster $N$ and a function $x: W \to \widehat{G}$. In contexts where it is clear which function $x$ we are referring to, we will simply say that $N$ is neutral, instead of saying $N$ is neutral for the function $x$.

Now, we proceed to determine the edges used by a correction operator. Consider a state $\ket{\psi} \in V_{gs}$ and a $Z$-type error operator $R$ such that $R\ket{\psi} = \ket{\psi'}$. For each site $(v,p)$ where $P_{0,\mathbf{1}}(v,p)\ket{\psi'} = 0$, we identify the operator $P_{g,\gamma}(v,p)$ that ensures $P_{g,\gamma}(v,p)\ket{\psi'} = \ket{\psi'}$. This process leads to the determination of the labels $Syn_{R}(v) = \gamma \neq \mathbf{1}$. Next, we construct clusters by connecting vertices that are labeled.


\begin{algoritmo} \label{redes}
\textbf{Algorithm for the construction of the correction graph $\G(C)$:}\\
Input: Let $W \subset V$, and $x: W \to \widehat{G}$ be a function such that for each $v \in W$, $x(v) \neq \mathbf{1}$, and $\prod_{v \in W}x(v) = \mathbf{1}$.\\
Output: A graph $\G(C) \subset \cL$ with connected components $N_1, \dots, N_m$, each of which is a neutral cluster, and for each vertex $v \in W$, there is a $N_k$ such that $v \in N_k$.\\

Initialization: \begin{itemize}
        \item Initially $\G(C) = W$.
        \item Consider the complete graph $\T$ with vertex set $W $.
        \item Associate a weight to each edge of $\T$. Initially, for $v_i,v_j \in W$, the weight $w(v_i,v_j)$ is set to $d(v_i,v_j)$.
        \item Take $W' \subset W$, initially setting $W' = W$.\\
    \end{itemize}
    
\begin{itemize}
    \item[1)] Apply Dijkstra’s algorithm on $\T$ to find the minimum weight paths between any pair of vertices in $W'$, and let $\tau_{ij} \subset \T$ be a minimum weight path between $v_i,v_j \in W'$, with weigh $w(\tau_{ij})$.
    
    \item[2)] Take $t= \min \{w(\tau_{ij}): v_i,v_j \in W', i \neq j \}$. If for $v_r,v_s \in W'$, $t= w(\tau_{rs})$, then $$\tau_{rs} = (v_r=v_1,v_2,v_3, \dots , v_n = v_s) \subset \T,$$ where $\sum_{i=1}^{n-1}w(v_i,v_{i+1}) = t$. For each $1\le i \le n-1$, if $w(v_i,v_{i+1}) \neq 0$ construct a path $\rho \subset \cL$ of length $w(v_i,v_{i+1})$ that connects $v_i$ with $v_{i+1}$ in $\cL$.

    \item[3)] Add every path $\rho$ constructed in the previous step to the graph $\G(C)$.

    \item[4)] Take the connected components $N_1, \dots, N_n$ of the graph $\G(C)$. Consider the partition of $W$ in the sets $W_k = W \cap N_k$.
    \begin{itemize}
       \item[4.1)] Update weights. If $v_i, v_j \in W$ belong to the same connected component $N_k$, set $w(v_i,v_j) = 0$; otherwise, keep $w(v_i,v_j) = d(v_i,v_j)$. 
       \item[4.2)] Update $W'$. For each $W_k$ pick a vertex $v_k \in W_k$, thus redefining $W' = \{ v_k | N_k \text{ non-neutral}\}$
    \end{itemize}
    
    \item[5)] If $W' \neq \varnothing$, repeat from step $1$. If $W' = \varnothing$, the algorithm ends. 
    
\end{itemize}
\end{algoritmo}

\begin{proposition}
    The Algorithm for the construction of the correction graph \ref{redes} runs in polynomial time.
\end{proposition}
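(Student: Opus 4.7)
The plan is to split the analysis into (a) an initialisation cost, (b) a per-iteration cost, and (c) a bound on the number of iterations, and then combine these. Set $N=|V|$, $M=|E(\mathcal{L})|$ and $w=|W|$; these are polynomially bounded in the size of the input lattice, and arithmetic in $\widehat{G}$ required for neutrality checks is polynomial in $\log|G|$.

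For (a), I would run a breadth-first search from each $v\in W$ inside $\mathcal{L}$ and store the predecessor trees. This produces every pairwise distance $d(v_i,v_j)$ in total time $O(w(N+M))$ and allows a shortest lattice path between any two labelled vertices to be recovered in time $O(N)$ on demand. Building $\mathcal{T}$ with its initial weight function then costs an additional $O(w^2)$.

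For (c), I would show that the main loop runs at most $w-1$ times. After step~4.1, every pair of vertices of $W$ lying in the same connected component of $\mathcal{G}(C)$ carries weight $0$ in $\mathcal{T}$, whereas $W'$ contains exactly one representative per non-neutral cluster, so the pair $(v_r,v_s)$ attaining the minimum $t>0$ in step~2 necessarily lies in two different components. Adding the constructed lattice path therefore merges two components of $\mathcal{G}(C)$ into one, strictly decreasing the number of connected components of $\mathcal{G}(C)$ that meet $W$. Starting from $w$ singleton components, after at most $w-1$ merges every vertex of $W$ sits in a single cluster, which is neutral by the input hypothesis $\prod_{v\in W}x(v)=\mathbf{1}$; hence $W'=\varnothing$ and the algorithm halts.

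For (b), the work per iteration is polynomial: step~1 runs Dijkstra on $\mathcal{T}$ from each vertex of $W'$ in time $O(w^{3}\log w)$; step~2 reconstructs at most $w-1$ lattice subpaths of length $O(N)$ from the stored BFS trees, for a total of $O(wN)$; step~3 updates $\mathcal{G}(C)$ in $O(wN)$; step~4 finds the connected components of $\mathcal{G}(C)$ in $O(N+M)$, refreshes the $O(w^{2})$ edge weights, and tests neutrality of each cluster in $O(w\log|G|)$. Multiplying the per-iteration cost by the $O(w)$ iteration bound and adding the initialisation cost gives a total running time polynomial in $N$, $M$, $w$, and $\log|G|$. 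The most delicate point I expect is the iteration-count step: one must check that the zero-weighting convention of step~4.1 combined with the one-representative-per-cluster definition of $W'$ genuinely prevents the algorithm from re-examining already connected vertices, so that every pass strictly decreases the number of non-neutral clusters and the outer loop cannot spin for more than $w-1$ rounds.
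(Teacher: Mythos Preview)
Your proposal is correct and follows essentially the same skeleton as the paper's proof: bound the cost of a single pass through steps~1--5 by a polynomial (dominated by the Dijkstra calls on $\mathcal{T}$), bound the number of passes by $|W|$, and multiply. The paper's version is terser---it simply asserts that $|W'|$ drops by at least one per iteration and quotes $O(|W|^2)$ per Dijkstra run to reach $O(|W|^4)$ overall---whereas you justify the iteration bound via the component-merging argument and additionally account for the BFS initialisation, lattice-path reconstruction, component detection, and neutrality checks. These extras are elaborations rather than a different strategy; your termination argument (two representatives of distinct non-neutral clusters force $t>0$, hence a genuine merge) is exactly what underlies the paper's one-line claim that $|W'|$ strictly decreases.
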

\begin{proof}
Let $W = \{ v_1, \dots, v_r\}$ be the initial set of vertices for Algorithm \ref{redes}. Each iteration of steps 1 to 5 involves running Dijkstra's algorithm as many times as the cardinal of $W'$, denoted by $|W'|$. Given that Dijkstra's algorithm has a time complexity of $O(|W|^2)$, the execution of steps 1 to 5 collectively has a time complexity of $O(|W|^3)$. Since each iteration of steps 1 to 5 reduces the size of $W'$ by at least one, these steps are repeated at most $|W|$ times. Consequently, the overall time complexity of the Algorithm for the construction of the correction graph is $O(|W|^4)$.

\end{proof}

Now, we will see an example of how the Algorithm \ref{redes} works

\begin{example} \label{ejemplored}

\begin{figure}[h!]
    \centering
  \includegraphics[scale=1]{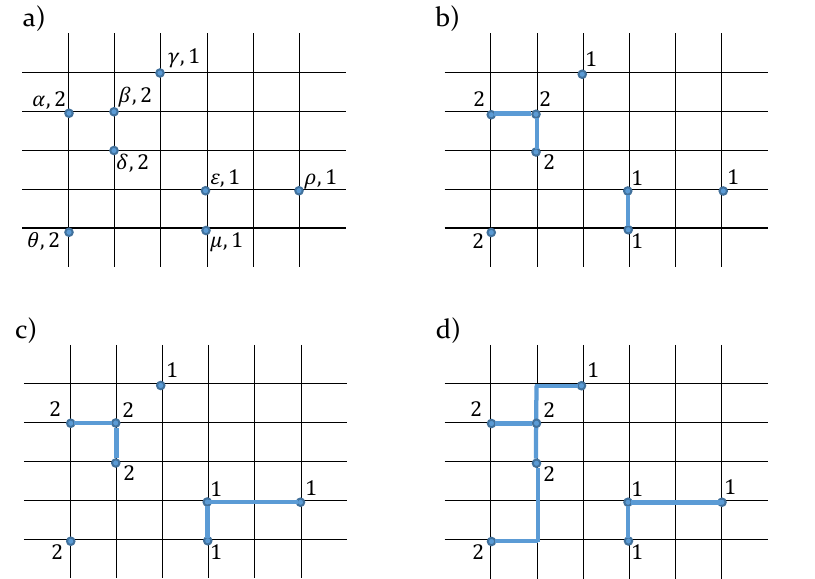}
   \caption{Example of creating clusters using Algorithm \ref{redes}.}
   \label{creacioncuerda}
\end{figure}
Let $G= \mathbb{Z}_3$; we know that $\widehat{\mathbb{Z}_3} \simeq \mathbb{Z}_3$. Consider the lattice in Figure \ref{creacioncuerda}. In part a), we have a set of vertices $W=\{\alpha,\beta,\gamma,\delta, \varepsilon, \theta, \mu, \rho \}$, each labeled with elements of $\widehat{\mathbb{Z}_3}$. Consider the complete graph $\T$ with vertex set $W$. Initially, for any $v_i, v_j \in W$, the weight of the edge of $\T$ connecting $v_i$ with $v_j$ is determined by the distance between $v_i$ and $v_j$ in the lattice $\mathcal{L}$, i.e., $w(v_i, v_j) = d(v_i, v_j)$. Also, initially, we have $W' = W$.

Thus, the first paths that Algorithm \ref{redes} creates in the lattice $\mathcal{L}$ are shown in Figure \ref{creacioncuerda}, b), connecting vertices of $W$  that are at the minimum distance from each other. In this case, the shortest distance is one. Vertices $\alpha, \beta,$ and $\delta$ are connected, forming a neutral cluster as the sum of their labels $2+2+2 \equiv 0$. On the other hand, $\varepsilon$ is connected to $\mu$, where the sum of their labels $1+1 \not\equiv 0$. Thus, we have five connected components, and the five sets of vertices $W_k$ will be
\begin{align*}
    W_1 = \{\alpha, \beta, \delta \} && W_2 = \{\gamma \} && W_3 = \{ \theta\} && W_4 = \{ \mu, \varepsilon\} && W_5 = \{\rho  \}
\end{align*}
The set $W'$ is updated into $W' = \{ \gamma,\theta, \mu, \rho \}$, which has one vertex from each non-neutral cluster. We also update the weights of the edges of the graph $\T$. For instance, now $w(\alpha,\beta) = 0$, since they belong to the same connected component. For vertices in different connected components, such as $\gamma$ and $\varepsilon$, we still have $w(\gamma,\varepsilon) =d(\gamma,\varepsilon) = 4$. In Figure \ref{creacioncuerda}, c), we have the following step of the Algorithm, which connects the vertices $\mu$ and $\rho$, since in the graph $\mathcal{T}$, the path with the minimum weight connecting $\mu$ and $\rho$ is the path $(\mu, \varepsilon, \rho)$ with a weight of $w(\mu, \varepsilon) + w(\varepsilon, \rho) = 0 + 2 = 2$. As $w(\varepsilon, \rho) = 2 \neq 0$, we create a path of length $2$ in $\mathcal{L}$ that connects $\varepsilon$ and $\rho$. This effectively connects $\mu$ and $\rho$ by taking a shortcut through $\varepsilon$, taking advantage of the fact that $w(\mu,\varepsilon)=0$, indicating that $\mu$ and $\varepsilon$ were already connected in $\mathcal{L}$.  Now we have four connected components where
\begin{align*}
    W_1 = \{\alpha, \beta, \delta \} && W_2 = \{\gamma \} && W_3 = \{ \theta \} && W_4 = \{ \mu, \varepsilon, \rho\}
\end{align*}
The set $W'$ changes to $W' = \{ \gamma,\theta \}$. Therefore, the last step of the Algorithm, seen d) of Figure \ref{creacioncuerda}, is to connect $\gamma$ and $\theta$. Note that the path with the minimum weight between them in $\T$ is $(\theta, \delta, \beta, \gamma)$ with a weight of $w(\theta, \delta) + w(\delta, \beta)+ w(\beta, \gamma)= 3 +0 +2 =5$. It becomes evident that the intention of Algorithm \ref{redes} in redefining the weights is to use shortcuts through paths already created to minimize the use of edges when connecting vertices. Thus, Algorithm \ref{redes}, when using shortcuts, can create several paths to connect two vertices. For example, in d), $\theta$ and $\gamma$ were connected using two  paths, as a shortcut along the path between $\delta$ and $\beta$ was taken, with one path from  $\theta$ to $\delta$ of length $w(\theta, \delta)=3$, and another from $\beta$ to $\gamma$ of length $w(\beta, \gamma)= 2$. As a final result of Algorithm \ref{redes}, we obtain a graph with two neutral clusters.
\end{example}

The following proposition demonstrates how, after selecting the graph for the correction operator, we can determine the operator acting on the edges of this graph, which will serve as the correction operator.

\begin{proposition} \label{existe}
Let $N$ be a cluster and $\{ v_1, \dots, v_r\}$ be the set of vertices that belong to $N$. To each vertex $v_k$, we assign an element $x(v_k) \in \widehat{G}$ such that $\prod_{k=1}^rx(v_k) = \mathbf{1}$, then there exists a $Z$-type operator $T$ acting only on the edges of the cluster $N$ such that for every vertex $v_k$, $Syn_T(v_k)=x(v_k)$.
\end{proposition}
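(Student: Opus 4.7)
The plan is to proceed by induction on the number of vertices $r$ of $N$, reducing the problem to a spanning tree of the cluster. Since $N$ is connected, it contains a spanning tree $\mathcal{T} \subseteq N$ on the same vertex set $\{v_1,\dots,v_r\}$. I will build $T$ so that $\gamma_e = \mathbf{1}$ on every edge $e \in N \setminus \mathcal{T}$, so that only the tree edges actively carry the correction, and I only need to choose $\gamma_e$ on the $r-1$ edges of $\mathcal{T}$.

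For the base case $r=1$, the neutrality condition forces $x(v_1) = \mathbf{1}$, and $T = \operatorname{Id}$ works. For the inductive step, pick a leaf $v_r$ of $\mathcal{T}$ and let $e$ be the unique tree edge at $v_r$, with other endpoint $v$. Since all non-tree edges at $v_r$ contribute trivially, one has $Syn_T(v_r) = \overline{\gamma_e}^{\varepsilon(e,v_r)}$, so I set
\begin{align*}
    \gamma_e \;:=\; x(v_r)^{-\varepsilon(e,v_r)},
\end{align*}
which gives $Syn_T(v_r) = x(v_r)$. The same edge $e$ then contributes $\overline{\gamma_e}^{\varepsilon(e,v)} = \gamma_e^{\varepsilon(e,v_r)} = x(v_r)^{-1}$ to the syndrome at $v$.

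To cancel this unwanted contribution at $v$, I consider the smaller tree $\mathcal{T}' := \mathcal{T} \setminus \{v_r,e\}$, which is still connected and hence a cluster in $\mathcal{L}$ with $r-1$ vertices, and define $x' : V(\mathcal{T}') \to \widehat{G}$ by $x'(v) := x(v)x(v_r)$ and $x'(w) := x(w)$ for $w \neq v,v_r$. The assumption $\prod_k x(v_k) = \mathbf{1}$ transfers directly to $\prod_{w \in \mathcal{T}'} x'(w) = \mathbf{1}$. By the inductive hypothesis applied to $\mathcal{T}'$ and $x'$, there is a $Z$-type operator $T'$ supported on edges of $\mathcal{T}'$ with $Syn_{T'}(w) = x'(w)$ for all $w \in \mathcal{T}'$. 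Setting $T := T' \cdot Z_{\gamma_e}$ (acting on $e$) gives an operator supported on edges of $N$ whose syndromes are $x(v_r)$ at $v_r$, $x'(v) \cdot x(v_r)^{-1} = x(v)$ at $v$, and $x'(w) = x(w)$ at every other vertex, as desired.

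The only subtle point is keeping the orientation bookkeeping straight: one has to verify that the character $\gamma_e$ chosen to produce the correct $Syn_T(v_r)$ really does contribute exactly $x(v_r)^{-1}$ at $v$, which follows from $\varepsilon(e,v) = -\varepsilon(e,v_r)$ and $\overline{\gamma} = \gamma^{-1}$ for $\gamma \in \widehat{G}$. There is no obstruction from $v_r$ possibly having many non-tree edges in $N$, because those are assigned $\mathbf{1}$, and no obstruction from $N \setminus \{v_r\}$ possibly being disconnected, because the induction is applied to the still-connected tree $\mathcal{T}'$ rather than to a residual cluster.
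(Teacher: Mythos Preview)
Your proof is correct and follows essentially the same approach as the paper: pass to a spanning tree, assign the trivial character on non-tree edges, and peel leaves one at a time, each time fixing $\gamma_e$ to hit the prescribed syndrome at the leaf and absorbing the leftover factor into the label of the neighboring vertex. The paper phrases this as an iterative procedure rather than a formal induction on $r$, and is somewhat looser with the sign conventions, but the argument is the same; your explicit verification of $\varepsilon(e,v) = -\varepsilon(e,v_r)$ and your remark that the induction is applied to the pruned tree $\mathcal{T}'$ (rather than a residual piece of $N$) make the argument slightly cleaner.
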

\begin{proof}
   Select a spanning tree $N'$ of $N$. This tree $N'$ serves as a subcluster of $N$. To the edges $e \in N \setminus N'$, we will apply the operator $Z_{\mathbf{1}}$. Considering $N'$ is a tree, it follows that certain vertices are incident to only one edge; suppose that $v_1$ is one of these vertices, where $v_1$ is incident to $e_1 \in N'$. So the operator that we will apply to $e_1$ will be $Z_{x(v_1)}^{\varepsilon(e_1,v_1)}$, with $\varepsilon(e_1,v_1)$ the function defined in \ref{signo}.
    After defining the operator for edge $e_1\in N'$, we need to look at the other vertex incident to $e_1$, denoted $v_2$. We have to update $x(v_2)$ to be $x(v_2) := x(v_2)x(v_1)$. The same steps we used for $e_1$ can be applied to get the operator for another edge, say $e_2$, in the tree $N' \setminus \{ e_1\}$. Then, we repeat this for $N' \setminus \{ e_1, e_2\}$ and so on. Since $N'$ is a tree with $r$ vertices and $r-1$ edges, we will do this $r-1$ times to find the operator we must apply to each edge. So, for every edge $e$ in $N$, we have found the operator $Z_{\gamma_e}$ that we apply. Then by considering the operator $$T = \left( \bigotimes_{e \in N}Z_{\gamma_e}\right) \otimes \left( \bigotimes_{e \notin N}I_e \right),$$ we make sure that $Syn_T(v_k) = x(v_k)$ for all $1 \le k \le r$.
\end{proof}

We now have everything we need to describe the process of correcting $Z$-type errors.\\
 
\fbox{
\begin{minipage}[c][1.1\height]
[c]{0.92 \textwidth}
\begin{algoritmo} \label{algoritm}
\begin{center}
   \textbf{Error Correction Algorithm for a $Z$-type error}
\end{center}
\end{algoritmo}
Consider the group $G$ and an oriented lattice $ \cL$ on a compact oriented surface $\Sigma$. Let $\ket{\psi} \in V_{gs}$ and $R$ be a $Z$-type error operator such that $R\ket{\psi} = \ket{\psi'}$, where $R$ affects at most $n$ edges. If  the minimum number of edges needed to form a non-contractible cycle in $\Sigma$ is more than $ \left\lfloor n\left( 
\frac{2+ \log_2(n)}{2}\right) +1 \right\rfloor$. Then a correction operator $C$ is constructed like this:
\begin{itemize}
    \item[1)] For each site $(v,p)$ we measure the state $\ket{\psi'}$ with respect to the set of the projectors $\{P_{g,\gamma}(v,p)\ | \ g \in G, \gamma \in \hat{G}\}$. The outcome of the measurement tells us for which $(g,\gamma)$ we have $P_{g,\gamma}(v,p)\ket{\psi'} = \ket{\psi'}$.
    \item[2)] Select the sites $(v,p)$ for which $P_{g,\gamma}(v,p)\ket{\psi'} = \ket{\psi'}$ with $P_{g,\gamma} \neq P_{0,\textbf{1}}$. 
    \item[3)] For sites $(v,p)$ of the previous item, we label the vertex $v$ with $Syn_{R}(v)^{-1}$.
    \item[4)] Establish the edges on which the correction operator will act. Take $\{ v_1, \dots, v_r\}$ the set of labeled vertices and build $\G(C)$ as indicated in Algorithm \ref{redes}, with $x(v_i) = Syn_{R}(v_i)^{-1} $.
    \item[5)] Step 4 result in a graph $\G(C)$ with disjoint neutral clusters $N_1, \dots, N_m$, such that for every vertex $v_s \in \{ v_1, \dots, v_r\}$, there is a cluster $N_i$ such that $v_s \in N_i$.
    \item[6)] For each cluster $N_i$, we construct the operator $T_i$ of Proposition \ref{existe}.
    \item[7)] Take operator $C = \prod_{i=1}^m T_i$, which applies operators of the form $Z_{\gamma}$ only on the edges in the clusters $N_1, \dots, N_m $, and for every vertex $v$, $Syn_C(v) = Syn_{R}(v)^{-1}$.
    \item[8)] Operator $C$ corrects the error $R$. It only remains to apply $C$ to $\ket{\psi'}$.
    \end{itemize}
\end{minipage}}\\\\

We have described the error correction algorithm for $Z$-type errors, although it is not evident why it is necessary that  the minimum number of edges needed to form a non-contractible cycle in $\Sigma$ be more than $ \left\lfloor n\left( 
\frac{2+ \log_2(n)}{2}\right)+1 \right\rfloor$. This guarantees two things, firstly,  that $R$ does not form a non-contractible graph, and thus it will be detected. And secondly, that the $\G(CR)$ is bridgeless graph; as shown in Proposition \ref{dificil}.\\



\begin{definition} \hfill
\begin{itemize}
    \item A path $\rho$, is an error path if $\rho \subset \G(R) \subset \G(CR)$.
        \item A path $\rho$, is a correction path if $\rho \subset \G(C) \subset \G(CR)$.
\end{itemize}
\end{definition}

\begin{definition} \label{deflevel} The concepts defined here where adapted form \cite{clu4}.
    \begin{itemize}
        \item Let $N$ a cluster of $\G(CR)$, the \emph{width} of $N$, will be the number of edges in $N$ that have errors, we denote $h(N)$.
        \item A \emph{level-0 cluster} of $\G(CR)$ is a cluster of $\G(R)$.
        \item A \emph{level-1 cluster} of $\G(CR)$ is a cluster that contains a collection of at least two level-0 clusters $\{ N_i \}$, such that for any pair $N_i, N_j$ that are connected by a correction path $\rho$, we have $l(\rho) \le \operatorname{min} \{h(N_i), h(N_j)\}$.
         \item A \emph{level-m cluster} of $\G(CR)$ is a cluster that contains a collection of at least two clusters $\{ N_i \}$, where each $N_i$ is a level-$k_i$ cluster with $k_i < m$, and at least one is a level-$(m-1)$ cluster. Furthermore, for any pair $N_i, N_j$ that are connected by a correction  path $\rho$, we have $l(\rho) \le \operatorname{min} \{h(N_i), h(N_j)\} $.
    \end{itemize}
\end{definition}

\begin{proposition}\label{dificil}
 Let $R$ be a $Z$-type error that affects $n$ edges, and $C$ be the correction operator built-in Algorithm \ref{algoritm}. Then every cycle of $\G(CR)$ will have a maximum length of $ f(n) = \left\lfloor n\left( 
\frac{2+ \log_2(n)}{2}\right) +1\right\rfloor$.
\end{proposition}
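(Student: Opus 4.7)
The plan is to bound the total number of edges $|E(N)|$ in each connected component $N$ of $\G(CR)$; since any cycle lies within a single connected component, its length is automatically bounded by $|E(N)|$. The connected components of $\G(CR)$ are the maximal clusters produced by Algorithm \ref{redes} in the sense of Definition \ref{deflevel}, so I would proceed by induction on this recursive cluster structure.

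The key inductive claim to prove is that for every cluster $N$ of $\G(CR)$ with width $w=h(N)$,
\begin{equation*}
|E(N)| \;\le\; w + \tfrac{w}{2}\log_2 w \;=\; \tfrac{w}{2}(2+\log_2 w).
\end{equation*}
The base case is a level-$0$ cluster, namely a connected component of $\G(R)$, which contains only error edges and so has $|E(N)|=w$. For the inductive step, a level-$m$ cluster $N$ is built from sub-clusters $\{N_1,\dots,N_s\}$ of strictly lower levels joined by the correction paths added in the final merging round; these paths can be organized into a tree that glues the sub-clusters together, and the constraint $l(\rho)\le \min\{h(N_i),h(N_j)\}$ in Definition \ref{deflevel} holds along each such tree edge. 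The crucial mechanism is a charging argument: for a merge of two sub-clusters of widths $w_a\le w_b$ along a correction path $\rho$, fix an injection from the $w_a$ error edges of the smaller sub-cluster into the $w_b$ error edges of the larger, and split each of the $\le w_a$ edges of $\rho$ into two half-unit charges placed on the two paired error edges. Then at that merge every charged error edge receives at most $\tfrac{1}{2}$ unit of charge, while the width of its enclosing cluster at least doubles. Iterating through the recursive history of $N$, each of the $w$ error edges of $N$ can be charged at most $\tfrac{1}{2}\log_2 w$ times, so the total count of correction edges inside $N$ is bounded by $\tfrac{w}{2}\log_2 w$. Combining with the inductive bounds on $|E(N_i)|$ yields the claim.

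Since $w\le n$ and $x\mapsto \tfrac{x}{2}(2+\log_2 x)$ is increasing on $[1,\infty)$, every cycle in $\G(CR)$ has length at most $\tfrac{n}{2}(2+\log_2 n)$, which is bounded by $\lfloor n(2+\log_2 n)/2 + 1\rfloor = f(n)$ after taking the floor. The main technical obstacle is that Algorithm \ref{redes} can merge several sub-clusters simultaneously in a single iteration via the paths $\tau_{rs}$ that thread through already-neutral intermediate clusters (as illustrated in Example \ref{ejemplored}), so one must verify that the correction edges added in any single iteration can be reorganized into a binary tree of pairwise merges whose path lengths still respect the $\min$-width inequality of Definition \ref{deflevel}. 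Justifying this reorganization, especially when a single new correction path takes advantage of zero-weight shortcuts across already-built clusters, is the delicate bookkeeping needed to make the clean charging argument rigorous.
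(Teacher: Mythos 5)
Your route is genuinely different from the paper's. The paper bounds the length of an individual cycle directly: it writes $l(\cC)=l(\rho)+w(v_1,v_2)$ for the pre-existing path $\rho$ and the closing correction path, inducts on the level of the cluster containing $\rho$, splits $\rho$ into correction paths $\{\rho_j\}_{j=1}^J$ and sub-cluster paths $\{\rho_i\}$, applies the induction hypothesis in the form $l(\rho_i)\le f(h(N_i))-w(u_i,u_i')$, and then optimizes over $J$ to arrive at the recursion $f(n)=2f(n/2)+n/2-1$. You instead bound the total edge count of an entire connected component by $\tfrac{w}{2}(2+\log_2 w)$ where $w$ is its width; this is a stronger statement which, if proved, yields the cycle bound for free and avoids the paper's somewhat delicate ``maximum at $J=1$'' optimization. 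That is a legitimate and arguably cleaner strategy.

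However, the charging step as you state it is flawed. When clusters $N_a,N_b$ with $h(N_a)=w_a\le w_b=h(N_b)$ are merged, the width of the cluster containing an error edge on the \emph{larger} side goes from $w_b$ to $w_a+w_b$, which need not double; so the claim that ``every charged error edge receives at most $\tfrac12$ unit of charge while the width of its enclosing cluster at least doubles'' fails on the $N_b$ side, and the conclusion that each error edge is charged at most $\tfrac12\log_2 w$ times does not follow. Concretely, if a width-$k$ cluster absorbs $w-k$ width-one clusters one at a time, your injections may send every half-unit charge on the larger side to the same error edge, which then accumulates $\Theta(w)$ charges while its cluster doubles only once. The aggregate bound you want is nevertheless true, and is better proved with a potential function than with per-edge charging: for $\phi(x)=\tfrac{x}{2}\log_2 x$ one has $\phi(a)+\phi(b)+\min(a,b)\le\phi(a+b)$, since with $a\le b$ the term $\tfrac{a}{2}\log_2\tfrac{a+b}{a}$ is at least $\tfrac{a}{2}$ (as $\tfrac{a+b}{a}\ge 2$) and $\tfrac{b}{2}\log_2(1+\tfrac{a}{b})\ge\tfrac{a}{2}$ by concavity of $\log_2(1+x)$ on $[0,1]$. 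Induction over the merge tree then bounds the number of correction edges in a component of width $w$ by $\phi(w)$. This superadditivity also disposes of your worry about simultaneous multi-way merges: serializing a tree of merges into pairwise ones only weakens the constraint $l(\rho)\le\min\{h(N_i),h(N_j)\}$, because widths only grow. With that replacement your argument closes, and in fact gives the bound without the paper's trailing ``$+1$''.
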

\begin{proof}
    Let $\cC$ be a cycle of $\G(CR)$. The creation of this cycle occurs because at some point Algorithm \ref{redes} connects two vertices $v_1$ and $v_2$ that were previously connected by a  path $\rho \subset \G(CR)$. When the Algorithm connects $v_1$ with $v_2$, it is done through a  path of length $w(v_1,v_2)$. Therefore, the length of $\cC$ satisfies that
    $$l(\cC) = l(\rho)+ w(v_1,v_2).$$
    Furthermore, the fact that $v_1$ and $v_2$ belonged to $\rho$ tells us that before Algorithm \ref{redes} connected them, they were already part of the same level-$m$ cluster in $\G(CR)$. We will proceed by induction on $m$. For the level-$0$ cluster the  path $\rho$, that connects $v_1$ with $v_2$, is only composed of errors then $l(\rho) \le n$ and then we will have a cycle of length at most $2n$ edges, where clearly $2n \le f(n)$.

    Suppose that the proposition holds when $\rho$ is contained in a level-$k$ cluster with $k<m$ and let us now prove the case of a level-$m$ cluster. Note that $\rho$ is a succession of correction  paths  and error  paths, for the Algorithm \ref{redes} the weight $w(v_1,v_2)$ is bounded by the number of errors in $\rho$, that is, $w(v_1 ,v_2) \le h(\rho)$, this is because the correction  paths  do not add to the distance as they are shortcuts.    Moreover, by the definition of a level-$m$ cluster, the path $\rho$ can also be viewed as a succession of correction paths $\{\rho_j\}_{j=1}^J$ and paths $\{\rho_i\}_{i=1}^I$ contained in a level-$k$ cluster, where $k < m$. Let $\{N_i\}_{i=1}^I$ be the set of clusters containing the paths $\{\rho_i\}_{i=1}^I$, such that $\rho_i = \rho \cap N_i$. The following shows what a section of $\rho$ looks like:
    
    \begin{center}
\tikzset{every picture/.style={line width=0.75pt}} 
\begin{tikzpicture}[x=0.50pt,y=0.50pt,yscale=-1,xscale=1]
\draw    (100,112) -- (213,112) ;
\draw    (326,112) -- (439,112) ;
\draw [color={rgb, 255:red, 0; green, 0; blue, 0 }  ,draw opacity=1 ]   (213,112) -- (326,112) ;
\draw   (213,112) -- (234.19,77) -- (304.81,77) -- (326,112) -- (304.81,147) -- (234.19,147) -- cycle ;

\draw (155,84.4) node [anchor=north west][inner sep=0.75pt]    {$\rho_{j}$};
\draw (361,84.4) node [anchor=north west][inner sep=0.75pt]    {$\rho_{j+1}$};
\draw (261,86.4) node [anchor=north west][inner sep=0.75pt]    {$\rho_{i}$};
\draw (256,51.4) node [anchor=north west][inner sep=0.75pt]    {$N_{i}$};
\end{tikzpicture}
    
\end{center}
    
    Then $w(v_1,v_2)$ is bounded by the number of errors in $\{ \rho_i\}_{i=1}^I$. Furthermore, for each $\rho_i \in \{ \rho_i\}_{i=1}^I$, consider its two endpoints $u_i$ and $u'_i$. So 
    $w(v_1,v_2) = \sum_{i=1}^I w(u_i,u'_i).$
    In addition, by the induction hypothesis, the cycle formed with $\rho_i$ by connecting $u_i$ and $u'_i$ with a  path of length $w(u_i,u'_i)$ must have a length of at most $f(h(N_i))$. Therefore, $l(\rho_i) \le f(h(N_i))- w(u_i,u'_i)$. So,
    $$l(\cC) = \sum_{j=1}^J l(\rho_j) +\sum_{i=1}^I l(\rho_i)+ w(v_1,v_2) \le \sum_{j=1}^J l(\rho_j) +\sum_{i=1}^I f(h(N_i)) $$    
    We will now see how the length of the correction  paths  is related to some errors. Let $\rho_j \in \{ \rho_j\}_{j=1}^J$. Let us consider the vertex $u_{j}$ at the right end of $\rho_j$. This vertex belongs to a level-$k$ cluster, which we denote by $N_{j}$. It is important to note that $N_{j}$ may or may not be part of the set $\{ N_i\}_{i=1}^I$. Both cases are illustrated below

    \begin{center}
        \tikzset{every picture/.style={line width=0.75pt}} 

\begin{tikzpicture}[x=0.5pt,y=0.5pt,yscale=-1,xscale=1]

\draw    (11,111) -- (92,111) ;
\draw    (205,111) -- (293,111) ;
\draw   (92,111) -- (113.19,76) -- (183.81,76) -- (205,111) -- (183.81,146) -- (113.19,146) -- cycle ;
\draw    (406,162) -- (519,162) ;
\draw    (519,162) -- (632,162) ;
\draw   (519,162) -- (484.01,140.79) -- (484.06,70.16) -- (519.08,49) -- (554.06,70.21) -- (554.01,140.84) -- cycle ;
\draw  [fill={rgb, 255:red, 208; green, 2; blue, 27 }  ,fill opacity=1 ] (88.8,110.95) .. controls (88.82,109.18) and (90.28,107.77) .. (92.05,107.8) .. controls (93.82,107.82) and (95.23,109.28) .. (95.2,111.05) .. controls (95.18,112.82) and (93.72,114.23) .. (91.95,114.2) .. controls (90.18,114.18) and (88.77,112.72) .. (88.8,110.95) -- cycle ;
\draw  [fill={rgb, 255:red, 208; green, 2; blue, 27 }  ,fill opacity=1 ] (515.8,161.95) .. controls (515.82,160.18) and (517.28,158.77) .. (519.05,158.8) .. controls (520.82,158.82) and (522.23,160.28) .. (522.2,162.05) .. controls (522.18,163.82) and (520.72,165.23) .. (518.95,165.2) .. controls (517.18,165.18) and (515.77,163.72) .. (515.8,161.95) -- cycle ;

\draw (37,85.4) node [anchor=north west][inner sep=0.75pt]    {$\rho_{j}$};
\draw (239,85.4) node [anchor=north west][inner sep=0.75pt]    {$\rho_{j+1}$};
\draw (134,98.4) node [anchor=north west][inner sep=0.75pt]    {$N_{j}$};
\draw (445,136.4) node [anchor=north west][inner sep=0.75pt]    {$\rho_{j}$};
\draw (578,136.4) node [anchor=north west][inner sep=0.75pt]    {$\rho_{j+1}$};
\draw (504,96.4) node [anchor=north west][inner sep=0.75pt]    {$N_{j}$};
\draw (71,118.4) node [anchor=north west][inner sep=0.75pt]    {$u_{j}$};
\draw (510,168.4) node [anchor=north west][inner sep=0.75pt]    {$u_{j}$};
\end{tikzpicture}

    \end{center}
    
    Since we are in a level-$m$ cluster, we have that $l(\rho_j) \le h(N_{j})$. However, the length of $\rho_j$ is also bounded by the cluster adjacent to $\rho_j$ through its left vertex, which is $N_{j-1}$. Thus, to maximize the length of $\rho_j$ with respect to the number of errors, it is best for each cluster to have the same width and distribute the errors uniformly. That is, for $J$ correction  paths , the errors are distributed in $J+1$ clusters, each with a width of $\frac{n}{J+1}$. Therefore, $h(N_j)=\frac{n}{J+1}$ for all $j \in J$. Then, the $J$ correction  paths  together will have a length of at most $\frac{Jn}{J+1}$. Therefore, we can affirm that,
    $$\sum_{j=1}^J l(\rho_j)  \le  \frac{Jn}{J+1}$$    
    But also, notice that    
    $$\sum_{i=1}^I f(h(N_i)) + \sum_{j=1}^J l(\rho_j) \le \sum_{j=0}^J f(h(N_j)) + \frac{Jn}{J+1} - J$$   
   This is because for every $N_i \in \{N_i\}_{i=1}^I$, we also have $N_i \in \{N_j\}_{j=0}^J$. In other words, each $N_i$ equals some $N_j$ for a certain $j \in J$. However, for each $N_j$, there are two possibilities. One possibility is that $N_{j} \not\in \{ N_i\}_{i=1}^I$, which implies that $\sum_{i=1 }^I f(h(N_i))$ will be less than $\sum_{j=0}^J f(h(N_j))$ by at least one. The other possibility is that $N_{j} \in \{ N_i\}_{i=1}^I$. In this case, since $l(\rho_j) \le h(N_j)$, if $l(\rho_j) < h(N_{j})$, then $l(\rho_j) \le h(N_{j})-1$. Alternatively, if $l(\rho_j) = h(N_{j})$, then Algorithm \ref{redes} would have created $\rho_j$ and also created a shortcut through the cluster $N_{j} =N_i$. As a result, the weight $w(u_i,u'_i)$, which depends on $N_i$, would no longer add to $w(v_1,v_2)$, thereby decreasing it by at least one. Therefore, in either scenario, for each $N_j$, we can subtract one from the inequality. So,
    $$ l(\cC) \le \sum_{j=0}^J f(h(N_j)) + \frac{Jn}{J+1} - J$$
   Let us remember that $h(N_j)=\frac{n}{J+1}$ and that $J$ is an integer greater than one. Using simple calculus, we can see that the maximum of the expression on the right occurs when $J= 1$. Then
   \begin{align*}
       l(\cC) & \le \sum_{j=0}^1 f\left(\frac{n}{2}\right) + \frac{n}{2} - 1\\
       & \le 2 \left( \frac{n}{2}\left( 
\frac{2+ \log_2(\frac{n}{2})}{2}\right) +1 \right) + \frac{n}{2} - 1 \\
       & = n\left( 
\frac{2+ \log_2(n)}{2}\right) +1
   \end{align*}
\end{proof}

\begin{theorem} \label{teorema}
Let $G$ be an abelian group, and $\Sigma$ be a compact oriented surface. Consider an oriented lattice $\mathcal{L}$ on $\Sigma$, such that $ \left\lfloor n\left( 
\frac{2+ \log_2(n)}{2}\right) +1\right\rfloor$ is the minimum number of edges required to form a non-contractible cycle in $\Sigma$. Then $V_{gs}$ can correct any $Z$-type error that affects at most $n$ edges.
\end{theorem}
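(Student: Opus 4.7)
The plan is to apply Algorithm \ref{algoritm} to construct a correction operator $C$ for $R$, and then verify the two sufficient conditions needed to invoke Theorems \ref{multiloop} and \ref{multiloop contractil}: that $\G(CR)$ is bridgeless, and that it is contractible. Combining these will force $CR$ to act as the identity on $V_{gs}$, which is exactly the statement that $C$ corrects $R$.

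First I would run Algorithm \ref{algoritm} on the outcome of the syndrome extraction, producing a correction operator $C$ whose associated graph $\G(C)$ decomposes into neutral clusters $N_1, \dots, N_m$ covering every labeled vertex. By step 7 of the algorithm (which rests on Proposition \ref{existe}), the operator $C$ satisfies $Syn_C(v)=Syn_R(v)^{-1}$ for every vertex $v$. Since syndromes multiply under composition of $Z$-type errors, this gives $Syn_{CR}(v)=\mathbf{1}$ for all $v \in V$. Theorem \ref{multiloop} then tells us that $\G(CR)\subset \cL$ is bridgeless.

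Next I would establish contractibility of $\G(CR)$. This is where Proposition \ref{dificil} does the real work: since $R$ affects at most $n$ edges, every cycle of $\G(CR)$ has length at most $f(n)=\left\lfloor n\left(\frac{2+\log_2(n)}{2}\right)+1\right\rfloor$. By the hypothesis on $\cL$, no non-contractible cycle of $\Sigma$ can be realized using only $f(n)$ edges, so every cycle in $\G(CR)$ must bound a disk in $\Sigma$. By the definition of a contractible subgraph (every cycle contractible), $\G(CR)$ is contractible.

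With $\G(CR)$ both bridgeless and contractible and with trivial vertex syndrome, Theorem \ref{multiloop contractil}$(\imath)$ applies, yielding $(CR)\ket{\psi}=\ket{\psi}$ for every $\ket{\psi}\in V_{gs}$. Hence $C$ corrects $R$ on the code subspace, which proves the theorem. The only delicate point is the bound in Proposition \ref{dificil}, but since that proposition has already been established, the remaining argument is a clean assembly of the pieces rather than a new technical obstacle.
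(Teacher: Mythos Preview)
Your proposal is correct and follows essentially the same route as the paper's own proof: construct $C$ via Algorithm \ref{algoritm}, use the resulting syndrome cancellation together with Theorem \ref{multiloop} to obtain bridgelessness, invoke Proposition \ref{dificil} and the lattice hypothesis to force contractibility, and conclude with Theorem \ref{multiloop contractil}. The paper's argument is a terse version of exactly what you wrote.
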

\begin{proof}
    Let $R$ be a $Z$-type error that affects at most $n$ edges. Constructing the operator $C$ as the Error Correction Algorithm \ref{algoritm} says, then $Syn_{CR}(v) = \mathbf{1}$, by Theorem \ref{multiloop}, $\G(CR)$ is a bridgeless graph, and by Proposition \ref{dificil}, it is a contractible graph. Then by Theorem \ref{multiloop contractil} $CR\ket{\psi} = \ket{\psi}$, and so $C$ corrects the error $R$.
\end{proof}
\begin{example}
Let $G= \Z_2 \times \Z_4$ and consider the lattice in Figure \ref{correcióntotal}. Suppose an error $R$ has occurred on $V_{gs}$. Let us see the correction process as illustrated in Figure \ref{correcióntotal}. In a) are the errors that $R$ has produced. In b), it is the information extracted by the operators $P_{g,\gamma}$. We have labeled the vertices with the elements $Syn_{R}(v)^{-1}$. In c), we have constructed a cluster $N$ that connects the previously labeled vertices. In d), we construct the correction operator $C$, which affects only the edges in $N$, and for every vertex $v$, we have that $Syn_C(v) = Syn_{R}(v)^{-1}$. In e), we overlap the error $R$ and the correction $C$ to see the action of $CR$ on $V_{gs}$. In f), we see how the action of $CR$ is something of form $\prod_{p\in P}\widehat{B}_{\gamma_p}(p)$, then the operator $CR$ acts as the identity on $V_ {gs}$.
\begin{figure}[h]
    \centering
  \includegraphics[scale=1.067]{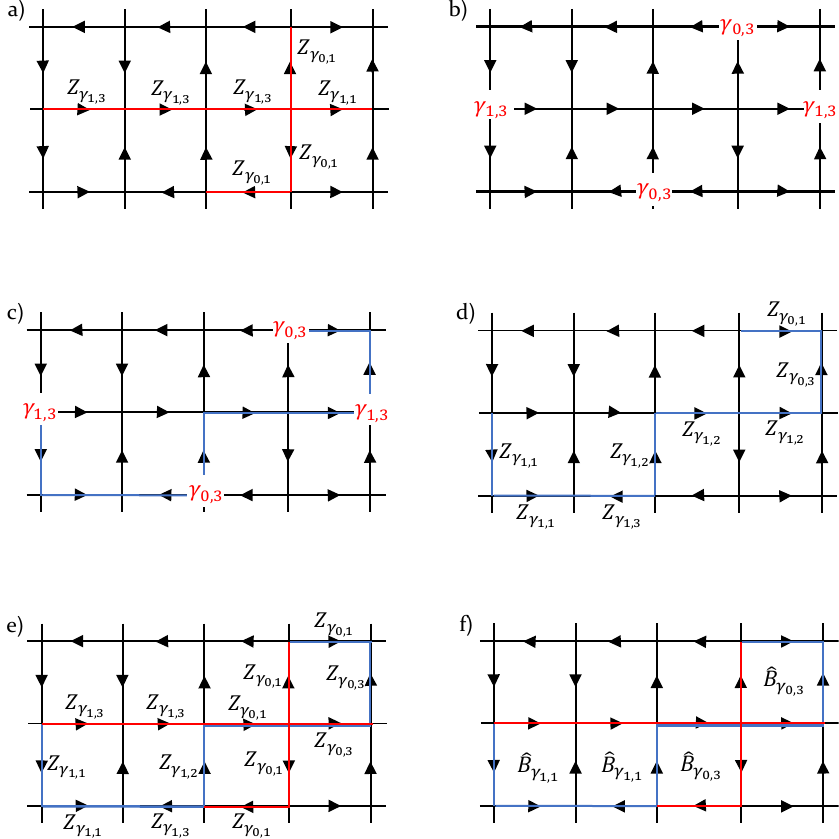}
   \caption{Error correction procedure.}
   \label{correcióntotal}
\end{figure}
\end{example}

\newpage

\section{NP correction problem} \label{apendice}
If we compare Theorems \ref{teoricamente} and \ref{teorema}, we see that to correct $n$ errors, Theorem \ref{teorema} requires that the non-contractible cycles use more than $\left\lfloor n \left( \frac{2+ \log_2(n)}{2} \right) +1 \right\rfloor$ edges, while Theorem \ref{teoricamente} requires that non-contractible cycles use more than $2n$ edges. Then it seems that Theorem \ref{teoricamente} is much better. However, this section will explore some difficulties encountered when constructing an algorithm capable of correcting the set of errors proposed in Theorem \ref{teoricamente}. For this, it suffices to consider $Z$-type errors.

For a $Z$-type error $R$, the first requirement for the correction operator $C$ is that for every vertex $v \in V$, we ask for $Syn_C(v) = Syn_R(v)^{-1}$. With the syndrome of $R$ as the only information, we need to choose the edges on which the correction operator will act. We made this selection of edges with Algorithm \ref{redes}. This algorithm constructs a graph that meets specific conditions we have requested. However, this graph is not necessarily optimal, in the sense that there may exist other graphs that also satisfy the specified conditions but have fewer edges than the graph constructed by Algorithm \ref{redes}. Minimizing the number of edges in the graph $\G(C)$ is crucial because we aim for the edges in the graph $\G(CR)$ to be insufficient for forming a non-contractile cycle and thus guarantee that $\G(CR)$ is contractile, which, according to Theorem \ref{multiloop}, implies that $C$ corrects $R$. Therefore, if there were an algorithm capable of creating graphs with the minimum possible number of edges for a specific syndrome, we could correct as many errors as Theorem \ref{teoricamente} indicates. Yet, as we are going to show in Proposition \ref{optimizararistas}, finding such an optimal graph is an NP-complete problem.

\begin{proposition}\label{optimizararistas}
    Let $R$ be an error operator, for which we only know the syndrome. Finding a graph $\G(C) \subset \mathcal{L}$ with connected components $N_1, \dots, N_m$, each of which is a neutral cluster for the function $x(v) = Syn_R(v)^{-1}$, where $\G(C)$ has the minimum number of edges, is an NP-complete problem.
\end{proposition}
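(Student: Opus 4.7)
The plan is to prove NP-completeness in two parts: membership in NP, and NP-hardness via reduction from the Minimum Tree Partition problem (MTP), following the strategy alluded to in the introduction.

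For membership in NP, first phrase the problem in its decision form: given a budget $k$, does there exist a subgraph $\G(C) \subset \cL$ with at most $k$ edges whose connected components $N_1,\dots,N_m$ cover $W$ and satisfy $\prod_{v \in N_i \cap W} x(v) = \mathbf{1}$ for each $i$? A candidate $\G(C)$ serves as a polynomial-size certificate: enumerate its connected components in polynomial time, compute each component's product in $\widehat{G}$ using the group presentation, check neutrality of every component, and verify the edge count. All these steps are polynomial in the input size, so the problem lies in NP.

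For NP-hardness, I would reduce from MTP. An MTP instance consists of a tree $T$ whose vertices carry weights in a finite Abelian group $H$, together with a budget $k$; the question is whether the vertices of $T$ admit a partition into connected subtrees each of zero total weight using at most $k$ edges in total. Given such an instance, the construction is as follows. Choose a compact oriented surface $\Sigma$ and a lattice $\cL$ on $\Sigma$ that embeds $T$ as a subgraph. Take a finite Abelian group $G$ with $\widehat{G} \cong H$ (for instance $G := H$, since $\widehat{H} \cong H$ for finite Abelian groups). Let $W$ be the set of weighted vertices of $T$, and define the syndrome function $x : W \to \widehat{G}$ by transporting the MTP weight of each vertex. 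The reduction asks whether there is a subgraph of $\cL$ with at most $k$ edges whose components are neutral and cover $W$, which is exactly our problem.

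The main obstacle is to ensure that an optimal $\G(C)$ in the constructed lattice uses only edges of the embedded $T$, so that it encodes an MTP partition. This is enforced by \emph{padding}: each non-tree edge of the initial embedding is subdivided into a path whose length exceeds $k$, so that any correction graph of size at most $k$ cannot use such a detour. With this padding, any feasible $\G(C)$ of edge count $\le k$ lies entirely inside $T$, and its connected components form a partition of $W$ into neutral (zero-weight) subtrees using at most $k$ edges of $T$, which is an MTP solution; conversely, any MTP solution of cost $\le k$ yields a feasible $\G(C)$ of cost $\le k$. Since the construction is polynomial in the MTP input, this is a polynomial-time many-one reduction, establishing NP-hardness and, together with the NP-membership argument, NP-completeness.
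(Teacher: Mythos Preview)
There is a genuine gap in your NP-hardness argument: the problem you call ``Minimum Tree Partition'' is not the MTP of the paper (nor of \cite{MTP}), and your version is solvable in polynomial time. The actual MTP takes as input a \emph{complete} graph on $r$ vertices with nonnegative edge weights together with a divisor $k$ of $r$, and asks for a partition of the vertex set into parts of size exactly $r/k$ minimizing the sum of the minimum-spanning-tree weights of the parts; this is NP-complete when $k \neq r/2, r$. By contrast, your variant --- a \emph{tree} $T$ with group-valued vertex weights, to be split into zero-sum connected subtrees using at most $k$ edges --- admits a standard dynamic program: root $T$, and for each vertex $v$ and each $s \in H$ record the maximum number of edges that can be cut in the subtree below $v$ so that every already-closed component has weight $0$ and the open component through $v$ has partial weight $s$; processing children one at a time gives $O(|V(T)|\cdot|H|^{2})$ work. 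Since your source problem lies in P (and in any case you give no argument that it is hard), reducing from it establishes nothing.

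The paper's reduction has a different shape. It first recasts the lattice optimization as a problem MVG$(m)$ on the complete weighted graph with vertex set $W$ (the labeled vertices) and edge weights given by distances in $\cL$. It then reduces the genuine MTP on $r$ vertices with parameter $k$ to MVG$(r/k)$ by assigning every vertex the value $1 \in \Z_{r/k}$ and keeping the edge weights: a cluster is neutral precisely when its size is a multiple of $r/k$, and minimizing total weight then forces the parts to have size exactly $r/k$, so optimal MVG solutions correspond to optimal MTP partitions. Your padding idea is a reasonable gadget in spirit, but it must be paired with a source problem that is actually NP-hard and whose metric can be realized as distances in a lattice; group-weighted trees are not such a source.
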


Before presenting the proof of Proposition \ref{optimizararistas}, we will rewrite our
correction graph construction problem in graph-theoretic language. In our problem, we have a set of vertices $W=\{ v_1, \dots, v_r\}$ of a lattice $\cL$. To each vertex $v_k$, we assign an element $Syn_R(v_k)^{-1} \in \widehat{G}$, such that $\prod_{k=1}^rSyn_R(v_k) = \mathbf{1}$. We want to build a graph $\G(C) \subset \cL$ compose of disjoint neutral clusters  $ N_1, \dots, N_m$, such that for every vertex $v_s \in \{ v_1, \dots, v_r\}$, there is a cluster $N_i$ such that $v_s \in N_i$. Let us see that asking for number of edges of the clusters $N_1, \dots,N_m$ to be a minimum turns this problem into an NP problem.

We will do this for the cyclic groups $\Z_m$. Consider a complete graph $\T$ with vertex set $W=\{ v_1, \dots, v_r\}$, and the weight of the edges is distance between the vertices in $\cL$. Thus our problem is expressed as follows: Let $\T = (W, E)$ be the complete graph with vertex set $W$, with a non negative edge weight $w(e)$ for every edge $e$. Furthermore, to each vertex $v$, we assign an element $x(v)$ of the cyclic group $\Z_m$ and suppose that $\sum_{v \in V}x(v) = 0$ in $\Z_m$. We would like to find a collection of connected vertex-disjoint subgraphs of $\T$, $\T_1 = (  W_1, E_1), \dots , \T_k = (W_k, E_k)$, that covers all of the vertices such that $\sum_{v \in W_i}x(v) = 0$ in $\Z_m$ for each $W_i$, and that minimizes $\sum^k_{
i=1}\sum_{e \in E_i}w(e)$.\\

\textbf{Modular Vertex Grouping (MVG($m$))} Given $r, m$, and a graph $\T$ with edge weights and vertex values as above, does there exist an optimal solution with at least two components? 
To do this, consider the following problem.\\

\textbf{Minimum Tree Partition (MTP)} Given a complete graph $\T=(W, E)$ with $|W|=r$, a non negative edge weight $w(e)$ for every edge $e$. The Minimum Tree Partition Problem is to find a partition of $W$ into disjoint sets $\left\{W_i\right\}_{i=1}^k$ such that for all $ i \in \{1, \ldots, k\}$, $\left|W_i\right|=r/k$, and $\sum_{i-1}^mk w\left(\operatorname{MST}\left(W_i\right)\right)$ is minimized, where $\operatorname{MST}(W_i)$ is a minimum spanning tree in the graph induced on $V_i$ and $w\left(E^{\prime}\right)=\sum_{e \in E^{\prime}} w(e)$ for $E^{\prime} \subseteq E$.\\

For the Minimum Tree Partition (MTP) problem with the above considerations, it is evident that $k$ divides $r$. Specifically, in the case of $k = r/2$, the MTP problem simplifies to the Minimum Weight Perfect Matching problem, which can be solved in polynomial time using Edmonds' Blossom Algorithm \cite{blossom}. This simplification also applies to our problem MVG($m$) when $m = 2$, which is the case of the Toric code. Hence, MVG($2$) reduces to the Minimum Weight Perfect Matching problem, allowing the Toric code to correct the number of errors specified in Theorem \ref{teoricamente}. When $m \neq r/2,r$, the MTP problem is a NP-complete \cite{MTP}. General approximation techniques for this problem, as well as other related problems, can be found in \cite{MTP, MTPaprox}. We will use the fact that the MTP is an NP-problem to prove that MVG($m$) with $m > 2$ is also an NP-problem.

\begin{proof}[Proof Proposition \ref{optimizararistas}] 
For the group $G= \mathbb{Z}_m$, the problem of constructing a correction graph is equivalent to the MVG($m$) problem. We will prove the following polynomial-time reduction.
 MTP $\le_p$ MVG($m$): Consider an instance of the Minimum Tree Partition (MTP) problem, with $\G(W,E)$, $|W| = r$, $k < r/2$, and weights $w(e)$. We define a corresponding instance of MVG($m$) as the complete graph on the vertex set $W$, with vertex values $x(v) = 1$ for every $v \in W$, and every edge $e$ is assigned the weight $w(e)$. A set $\{W_i\}_{i=1}^k$ is an optimal solution to the MTP problem if and only if the sum $\sum_{i=1}^k w\left(\operatorname{MST}\left(W_i\right)\right)$ is minimized. In this scenario, the trees $\G_i = \operatorname{MST}\left(W_i\right)$ will form an optimal solution for MVG($r/k$). This is because the set of trees is characterized by the minimization of the weight of the edges within them. Additionally, each tree has $r/k$ vertices, where for each vertex $x(v) = 1$. Therefore, since the group is $Z_{r/k}$, every tree is neutral.
\end{proof}

\begin{remark}
  It is not surprising that the MTP problem, with $|W| = 3r$ and $k = r$ (which corresponds to a case of MVG($3$)), has also been extensively studied. This particular scenario is known as the $P_3$-Partition problem \cite{P3partition}.
\end{remark}

As a consequence of Proposition \ref{optimizararistas}, we arrive at Theorem \ref{propNP}, which states that addressing the set of correctable errors proposed in Theorem \ref{teoricamente} through correction operators, such that the error and its correction together form a contractile graph, involves dealing with an NP-complete problem. Before we examine this result, let us recall the content of Theorem \ref{teoricamente}: it considers $G$ an abelian group, $\Sigma$ a compact oriented surface, and $\mathcal{L}$  an oriented lattice on $\Sigma$, with $2n+1$ being the minimum number of edges in $\mathcal{L}$ required to form a non-contractible cycle in $\Sigma$. Based on this, Theorem \ref{teoricamente} asserts that $V_{gs}$ constitutes a quantum error-correcting code for the set of errors,
\begin{align*}
    \mathfrak{R} = \{ R~|~R \text{ is an error that affects at most } n \text{ edges} \}.
\end{align*}


\begin{theorem} \label{propNP}
Under the hypotheses of Theorem \ref{teoricamente},  constructing a correction operator $C$ for each $R \in \mathfrak{R}$, guaranteeing that $\G(CR)$ is contractile is an NP-complete problem.
\end{theorem}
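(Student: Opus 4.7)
The plan is to prove NP-completeness in two standard steps: (a) membership in NP, and (b) NP-hardness via a polynomial reduction from the MVG problem, whose NP-completeness was established in the proof of Proposition \ref{optimizararistas}.

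For (a), given a candidate correction operator $C$, one verifies in polynomial time that $\G(CR)$ is contractible. Under the hypothesis of Theorem \ref{teoricamente}, every non-contractible cycle in $\Sigma$ uses at least $2n+1$ edges, so any graph with at most $2n$ edges contains only contractible cycles. Thus it suffices to check that $|\G(CR)|\le 2n$ and that the syndrome of $CR$ vanishes at every vertex and face; by Theorems \ref{multiloop} and \ref{multiloop contractil} this is equivalent to $CR$ fixing every state of $V_{gs}$. All of these checks are clearly polynomial in the size of the lattice.

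For (b), I would reduce MVG to the contractibility problem. Given an MVG instance specified by a complete weighted graph $\T$ on vertex set $W$ with labels $x:W\to\widehat{G}$, I construct a lattice $\cL$ on a compact oriented surface $\Sigma$ together with an error $R\in\mathfrak{R}$ as follows: the vertices of $\cL$ carrying a non-trivial syndrome under $R$ are put in bijection with $W$ so that their syndrome labels match $x$; the minimum-length path in $\cL$ between two such vertices equals the $\T$-weight on the corresponding edge; and the genus and lattice structure of $\Sigma$ are tuned so that the shortest non-contractible cycle has length exactly $2n+1$, where $n$ is calibrated so that a valid correction operator (one for which $\G(CR)$ is contractible) exists precisely when the MVG instance admits a solution within the prescribed weight bound. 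With this setup, any polynomial-time algorithm for the contractibility problem would decide MVG in polynomial time, yielding the desired NP-hardness.

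The main obstacle is the geometric construction in step (b). One must simultaneously (i) realize the abstract weighted graph $\T$ isometrically inside $\cL$ so that lattice distances reproduce the weights $w$, (ii) embed $\cL$ in a surface whose non-contractibility threshold is tight enough to force the correction to be near-optimal, and (iii) rule out parasitic ``shortcut'' paths through ambient regions of $\cL$ that would allow a valid $C$ strictly cheaper than any genuine MVG solution. A natural realization combines a planar gadget encoding $\T$ with sufficiently long handles attached to $\Sigma$ to control the shortest non-contractible cycle, padding the rest of $\cL$ so that no path between the $W$-vertices is shorter than the prescribed weight. Once this combinatorial construction is checked to be polynomial in the size of the MVG instance, the reduction is complete and, together with NP membership from (a), proves the theorem.
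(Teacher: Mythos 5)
There is a genuine gap in your hardness direction. Your step (b) is a plan, not a proof: the entire burden of the reduction is carried by the geometric construction you defer --- realizing an arbitrary weighted complete graph $\T$ isometrically inside a lattice $\cL$, tuning the genus of $\Sigma$ so the shortest non-contractible cycle has the calibrated length, and excluding ``parasitic'' shortcut paths. You explicitly list these as obstacles (i)--(iii) and then conclude ``once this combinatorial construction is checked \dots the reduction is complete,'' but none of them is resolved, and they are precisely the nontrivial content of such a reduction. As written, the argument establishes nothing beyond the membership check in (a).

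The paper takes a different and much shorter route that avoids building any lattice gadget at all. It does not reduce MVG to the correction problem by an explicit embedding; instead it argues a containment of problems: if $R$ affects exactly $n$ edges and $\G(R)$ is already edge-optimal for its syndrome, then any $C$ with $Syn_C(v)=Syn_R(v)^{-1}$ for which $\G(CR)$ is guaranteed contractible must use at most $n$ edges, i.e.\ $\G(C)$ must itself be an edge-optimal neutral-cluster graph for the inverse syndrome. Producing such an optimal graph is exactly the problem shown NP-complete in Proposition \ref{optimizararistas} (via the reduction MTP $\le_p$ MVG$(m)$, which is carried out on abstract weighted complete graphs, with the lattice entering only through the distance function). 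So the correction problem inherits NP-completeness without any new surface construction. If you want to salvage your approach, you should either carry out the embedding of (i)--(iii) in full --- which amounts to reproving Proposition \ref{optimizararistas} in a harder setting --- or, as the paper does, quote that proposition and argue that a contractibility-guaranteeing decoder solves the optimal-graph problem as a subroutine. A further small point: in (a) you verify contractibility by checking $|\G(CR)|\le 2n$; this is sufficient but not necessary (a graph with more than $2n$ edges can still be contractible), so as a certificate check it would reject some valid corrections, though NP membership can be recovered by testing contractibility of $\G(CR)$ directly in polynomial time.
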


\begin{proof}
  It is sufficient to consider $Z$-type errors. If an error $R \in \mathfrak{R}_Z$ occurs, the only information we have about $R$ is its syndrome. Assume that $R$ is an error affecting exactly $n$ edges with an optimal graph $\G(R)$ (that is, $\G(R)$ has the minimum number of possible edges for the syndrome of $R$). Any operator $C$ that corrects $R$ must satisfy $Syn_C(v) = Syn_R(v)^{-1}$. To ensure that $\G(CR)$ is contractile, $\G(CR)$ must have at most $2n$ edges, thus $\G(C)$ must have at most $n$ edges, which would imply that $\G(C)$ is an optimal graph for the syndrome $Syn_R(v)^{-1}$. This applies to every error $R$ that affects $n$ edges, with $\G(R)$ being an optimal graph. However, as we know by Proposition \ref{optimizararistas}, finding this optimal graph is an NP-complete problem. Thus, constructing a correction operator $C$ for each $R \in \mathfrak{R}_Z$, guaranteeing that $\G(CR)$ is contractile, is an NP-complete problem.
\end{proof}


\newcommand{\etalchar}[1]{$^{#1}$}

\end{document}